\newcounter{alph}
\newtheorem{theo}[alph]{Theorem}
\numberwithin{equation}{section}
\newtheorem{cor}[equation]{Corollary}
\newtheorem{lem}[equation]{Lemma}
\newtheorem{prop}[equation]{Proposition}
\newtheorem{thm}[equation]{Theorem}
\theoremstyle{definition}
\newtheorem{rem}[equation]{Remark}
\DeclareMathOperator{\im}{im}
\DeclareMathOperator{\cus}{cus}
\DeclareMathOperator{\Lip}{Lip}
\DeclareMathOperator{\rk}{rank}
\DeclareMathOperator{\Ray}{Ray}
\DeclareMathOperator{\res}{res}
\DeclareMathOperator{\sys}{sys}
\renewcommand{\Re}{\operatorname{Re}}
\renewcommand{\Im}{\operatorname{Im}}
\def\C{\mathbb C}
\def\N{\mathbb N}
\def\R{\mathbb R}
\def\H{\mathbb H}
\def\Z{\mathbb Z}
\def\ve{\varepsilon}
\def\vf{\varphi}
\def\la{\langle}
\def\ra{\rangle}
\definecolor{blue(ncs)}{rgb}{0.0, 0.53, 0.74}
\definecolor{green}{rgb}{0.0, 0.5, 0.35}
\begin{document}

\title[Small eigenvalues of pseudo-Laplacians]{Small eigenvalues of pseudo-Laplacians}
\author[Ballmann]{Werner Ballmann}
\address
{WB: Max Planck Institute for Mathematics,
Vivatsgasse 7, 53111 Bonn}
\email{hwbllmnn\@@mpim-bonn.mpg.de}
\author[Mondal]{Sugata Mondal}
\address{SM: Department of Mathematics and Statistics, University of Reading, Pepper Lane, Whiteknights, RG6 6AX UK}
\email{s.mondal@reading.ac.uk}
\author[Polymerakis]{Panagiotis Polymerakis}
\address{PP: Department of Mathematics, University of Thessaly, 3rd km Old National Road Lamia–Athens, 35100, Lamia, Greece}
\email{ppolymerakis\@@uth.gr}

\date{December 22, 2025}

\subjclass[2020]{58J50, 35P15, 53C20}

\keywords{Hyperbolic surface, Laplace operator, pseudo-Laplacian, spectrum, small eigenvalue}

\thanks{\emph{Acknowledgments.}
We are grateful to the Max Planck Institute for Mathematics
and the Hausdorff Center for Mathematics in Bonn for their support and hospitality. S. M. was partially supported by EPSRC grant APP16691.}
\maketitle

\begin{abstract}
We extend the Otal-Rosas bound on the number of small eigenvalues of the Laplacian on a hyperbolic surface to the small eigenvalues of pseudo-Laplacians. In the process, we extend the work of Colin de Verdi\`ere on the spectral theory of pseudo-Laplacians to hyperbolic surfaces with more than one cusp.
\end{abstract}

\section{Introduction}

The spectral theory of Laplacians of hyperbolic surfaces (normalized to have curvature $-1$) is a classical topic in Riemannian geometry
with connections ranging from number theory to mathematical physics.
The universal covering of a hyperbolic surface is the hyperbolic plane $\H$,
and the bottom of the spectrum of the Laplacian of $\H$ is $1/4$.

A hyperbolic surface $S$ is obtained as the quotient $\Gamma\backslash\H$ of $\H$ by a discrete and torsionless group $\Gamma$ of isometries of $\H$.
We are interested in the case where the area $|S|<\infty$.
Then the spectrum of (the Laplacian $\Delta$ of) $S$ is discrete below $1/4$,
and eigenvalues below or equal to $1/4$ are called \emph{small}.

For the closed surface $S_g$ of genus $g$, the following optimal estimate was conjectured by Buser and Schmutz \cite{Bu,Sch}.
The conjecture was proved by Otal and Rosas in the more general case of surfaces $S_{g,n}$, that is, surfaces $S_g$ with $n\ge0$ punctures \cite[Th\'eor\`eme 2]{OR}:

\begin{theo}\label{otro}
A hyperbolic metric on $S_{g,n}$ of finite area has at most $|\chi(S)|$ small eigenvalues.
\end{theo}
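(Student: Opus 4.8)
The plan is to establish Theorem~\ref{otro} by exhibiting, for any hyperbolic metric on $S = S_{g,n}$ of finite area, that the $(|\chi(S)|+1)$-st eigenvalue (counted with multiplicity from the bottom) is strictly greater than $1/4$. Equivalently, I would show that whenever $\varphi_0,\ldots,\varphi_N$ are linearly independent eigenfunctions with eigenvalues $\le 1/4$, one must have $N+1 \le |\chi(S)|$. The core tool is a topological estimate on the nodal and nodal-related sets of a suitable linear combination of such eigenfunctions. First I would recall that an eigenfunction $\varphi$ with eigenvalue $\lambda \le 1/4$ satisfies a strong maximum principle / unique continuation, so its nodal set $Z_\varphi$ is a locally finite graph embedded in $S$, and the complement $S \setminus Z_\varphi$ has finitely many components (the nodal domains), on each of which $\varphi$ has a fixed sign.

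Second, given the $(N+1)$-dimensional eigenspace-type family $E = \mathrm{span}(\varphi_0,\ldots,\varphi_N)$, for each point $p \in S$ (away from a small exceptional set) I would pick the nonzero $\varphi \in E$ vanishing to highest order at $p$; the key local fact, going back to the Courant-type analysis used by Otal--Rosas, is that such a $\varphi$ has at least $N+2$ nodal domains meeting near $p$, or more precisely that the "incidence" of nodal domains forces a lower bound on a certain Euler-characteristic count. The decisive global step is the \emph{Otal--Rosas inequality}: if $\varphi$ is an eigenfunction with eigenvalue $\le 1/4$ on a finite-area hyperbolic surface, then each nodal domain $\Omega$ of $\varphi$ is \emph{incompressible}, i.e.\ $\pi_1(\Omega) \to \pi_1(S)$ is injective, and moreover $\Omega$ is not a disk, not an annulus isotopic to a cusp, etc. This is the place where the hypothesis $\lambda \le 1/4$ enters crucially, via the fact that on the universal cover $\H$ the bottom of the spectrum is exactly $1/4$ together with a Faber--Krahn / Cheeger-type argument (a domain supporting a $\le 1/4$-eigenfunction with Dirichlet data cannot be "too small" in the relevant topological sense). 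I expect \emph{this incompressibility statement to be the main obstacle}: it requires the sharp spectral bound on $\H$, a careful treatment of the behavior near the cusps (where the injectivity radius goes to $0$ and where a nodal domain could a priori wind around a cusp), and an argument that rules out nodal domains that are topologically trivial.

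Third, with incompressibility in hand, I would run the counting argument: choosing $\varphi \in E$ adapted to a generic point (or a generic pair of points) and using that $\varphi$ has many nodal domains, I would assemble these incompressible, non-peripheral subsurfaces into $S$ and apply the additivity of Euler characteristic together with the classification of incompressible subsurfaces of $S_{g,n}$. Since each nodal domain has negative Euler characteristic (being incompressible and non-peripheral, hence not a disk, annulus, or once-punctured disk) and the nodal domains have disjoint interiors covering $S$, the number of nodal domains is at most $|\chi(S)|$; combined with the local lower bound "number of nodal domains $\ge N+2$" this yields $N+2 \le |\chi(S)|+1$, i.e.\ $N+1 \le |\chi(S)|$, which is exactly the claim. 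A technical point to handle carefully is that the naive bound gives $N+1 \le |\chi(S)|$ only after correctly accounting for the nodal graph $Z_\varphi$ itself (vertices and edges contribute to the Euler-characteristic bookkeeping), so I would set up the argument as an inequality $\chi(S) = \sum_\Omega \chi(\Omega) + \chi(Z_\varphi) - (\text{correction})$ and estimate each term; alternatively one uses the ``topological index'' formalism of Otal--Rosas directly. Finally I would note that the bound is sharp (e.g.\ for appropriate degenerating families), matching the Buser--Schmutz conjecture, though sharpness is not needed for the statement itself.
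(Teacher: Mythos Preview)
The paper does not actually prove Theorem~\ref{otro}; it is stated as a known result of Otal--Rosas \cite[Th\'eor\`eme 2]{OR} (with the non-orientable extension in \cite{BMM}) and is used as input elsewhere, e.g.\ in Theorem~\ref{mainq} and \cref{cory}. The paper's own new work, in Sections~\ref{secana} and~\ref{secnod}, establishes the analogous bound for the pseudo-Laplacians $\Delta_a$ (Theorem~\ref{main}) by adapting the Otal--Rosas/BMM machinery.

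That said, your outline contains a genuine gap that would block the proof even of the classical statement. The claimed lower bound ``the number of nodal domains of the chosen $\varphi$ is $\ge N+2$'' is not a Courant-type fact and is not what Otal--Rosas do. Courant's theorem gives \emph{upper} bounds on the number of nodal domains; there is no mechanism by which choosing $\varphi$ in an $(N{+}1)$-dimensional span to vanish to high order at a point forces many nodal domains. Locally, a high-order zero produces many nodal \emph{arcs} meeting at that point, but these arcs may all separate the same two sign-regions. So the final counting step ``$\#(\text{nodal domains})\ge N+2$ and each has $\chi\le -1$, hence $N+1\le|\chi(S)|$'' does not go through.

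The actual Otal--Rosas argument, and the BMM version the paper follows in \cref{secnod}, runs differently. One shows that for \emph{every} nonzero $\varphi$ in the span $E$, some component $Y_\varphi$ of $\{\varphi>0\}$ (or of an $\varepsilon$-approximate nodal set) is incompressible with $\pi_1(Y_\varphi)\supseteq F_2$. The bound on $\dim E$ then comes from a \emph{global} topological argument over the sphere of directions in $E$: the assignment $\varphi\mapsto[\text{isotopy class of }Y_\varphi]$ is locally constant on an open dense set, $Y_\varphi$ and $Y_{-\varphi}$ are disjoint incompressible subsurfaces of $S$, and an Euler-characteristic count on the resulting collection of isotopy classes forces $\dim E\le|\chi(S)|$. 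Your incompressibility step is essentially correct and is indeed the heart of the matter, but the link between $\dim E$ and $|\chi(S)|$ is not via a nodal-domain count for a single $\varphi$; it is via this variation over all of $E$.
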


Any non-compact hyperbolic surface $S$ of finite area is diffeomorphic to a closed surface $\bar S$ with finitely many points  $p_1,\dots,p_n$, called \emph{punctures}, removed.
Each puncture $p_i$ has a neighborhood $C_i$ in $S$, called a \emph{cusp}, which is isometric to a quotient $\mathbb{Z}\backslash\H$, truncated from below,
where we view $\H$ as the upper half-plane, $\H=\{z=x+iy\mid y>0\}$, and the infinite cyclic group $\mathbb{Z}$ (of integers) as acting on $\H$ by shifting horizontally.
Then the \emph{cuspidal} part of the spectrum of $S$, that is, the part which comes from eigenfunctions on $S$ which vanish asymptotically at the punctures, is of interest.
Such eigenfunctions and their eigenvalues are also called \emph{cuspidal} or \emph{parabolic}.
Cuspidal eigenfunctions can be extended by zero to $\bar S$.
By Huxley \cite[Theorem, p.\,352]{Hu} and Otal \cite[Proposition 2]{Ot} (using a different method),  $S$ does not have any small cuspidal eigenvalues if $\bar S$ is a sphere or a torus.
Furthermore, Otal shows that the (cuspidal) multiplicity of a small cuspidal eigenvalue is at most $-\chi(\bar S)-1$ \cite[Proposition 3]{Ot}.
Finally, Otal and Rosas conjecture that a non-compact hyperbolic surface $S$ of finite area
with $n$ punctures has at most
\begin{align}\label{otroc}
    -\chi(\bar S)-1=-\chi(S)-n-1
\end{align}
small cuspidal eigenvalues \cite[p.\,113]{OR}.
Partial results towards this conjecture were obtained in \cite{M2}.
In particular, by \cite[Theorem 1.6]{M2}, the conjectured estimate holds true for $S$ if certain closed geodesics on $S$ are sufficiently short.
The conjecture as such is open and is one of the spectral problems in the background of the present work.

We will study an approximation of the spectrum of $S$,
introduced by Lax and Phillips \cite{LP} and investigated further by Colin de Verdi\`ere \cite{CV}.
To describe it, we fix some notation.
We let $S$ be a non-compact hyperbolic surface of finite area,
diffeomorphic to a compact surface $\bar S$ with $n$ punctures $p_1,\ldots p_n$.
As cusps about the $p_i$, we choose pairwise disjoint closed neighborhoods $C_{i,b_i}$ in $S$, isometric to quotients of horoballs $\{z=x+iy\mid y\ge b_i\}$ in the upper half-plane $\H$ by shifts of the $x$-variable by integers (a normalization).
We let $b=(b_1,\dots,b_n)$.

For $a_i\ge b_i$, the \emph{horocycles} $H_{i,a_i}\cong\{(x+ia_i)\}\subseteq C_{i,b_i}$ and cusps $C_{i,a_i}\cong\{(x+iy)\mid y\ge a_i\}\subseteq C_{i,b_i}$
are quotients of horocycles and horoballs in $\H$.
For any tuple $a=(a_1,\dots,a_n)$ of real numbers with $a>b$, that is, $a_i>b_i$ for all $1\le i\le n$, we let
\begin{align}\label{haca}
    H_a = \cup_i H_{i,a_i} \quad\text{and}\quad C_a = \cup_i C_{i,a_i}.
\end{align}
For later purposes, we also introduce $S_a$, the compact domain in $S$ with boundary $H_a$ such that the interior of $S_a$ is the complement of $C_a$,
\begin{align}\label{saca}
    \Gamma\backslash\mathbb{H} = S = S_a\cup C_a \quad\text{and}\quad S_a \cap C_a = H_a.
\end{align}
Consider now the closed subspace $\mathcal{H}_a$ of the Sobolev space $\mathcal{H}_\infty=H^1(S)$, given by functions $f$ whose first Fourier coefficient $[f]_{i}=[f]_i(y)$,
with respect to the periodic variable $x$ on $\R/\Z$, vanishes on $C_{i,a_i}$ for all $1\le i\le n$.
The closure $\mathcal{L}_a$ of $\mathcal{H}_a$ in $\mathcal{L}_\infty=L^2(H)$  is the orthogonal complement of the space of functions in $\mathcal{L}_\infty$ which vanish outside $C_a$ and depend only on $y$ inside $C_a$.
The Friedrichs extension $\Delta_a$ of the Dirichlet form $\|\nabla f\|^2$ on $\mathcal{H}_a$ is a self-adjoint operator in $\mathcal{L}_a$ and is called a \emph{pseudo-Laplacian}; compare with \cite{CV} and \cref{prelim2} below.
Furthermore, the Laplacian $\Delta$ coincides with the Friedrichs extension $\Delta_\infty$ of the Dirichlet form $\|\nabla f\|^2$ on $\mathcal{H}_\infty$.
The resolvents of the $\Delta_a$ are compact and, therefore, their spectra are discrete \cite[p.\,206 ff]{LP}.
The eigenvalues of $\Delta_a$ approximate the eigenvalues of $\Delta=\Delta_\infty$ in the interval $[0,1/4]$ monotonically in $a$ as $a$ tends to $\infty$; see \cref{thmy} below. 
For any $\lambda\ge0$, denote by $N_a(\lambda)$ the number of eigenvalues of $\Delta_a$ in $[0,\lambda]$, including multiplicity.
For $\lambda\le1/4$, denote by $N(\lambda)$ the number of eigenvalues of $\Delta$  in $[0,\lambda]$, also including multiplicity.

\begin{theo}\label{mainq}
For any given $\lambda\ge0$, $N_a(\lambda)$ is non-decreasing in $a$. 
Moreover,
\begin{enumerate}
    \item\label{nal} if $\lambda\le1/4$, then $N_a(\lambda)\le N(\lambda)\le|\chi(S)|$;
    \item\label{nai} if $\lambda>1/4$, then $\lim_{a\to\infty} N_a(\lambda)=\infty$.
\end{enumerate}
\end{theo}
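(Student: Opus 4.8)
The plan is to combine a monotonicity argument for the family $\Delta_a$, a Rayleigh-quotient comparison with the genuine Laplacian $\Delta$, the Otal–Rosas bound (Theorem~\ref{otro}) applied to $S$, and an explicit analysis of the continuous spectrum near $1/4$ to force $N_a(\lambda)\to\infty$ when $\lambda>1/4$.

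First I would establish monotonicity of $N_a(\lambda)$ in $a$. The key observation is that if $a\le a'$ (componentwise), then $\mathcal H_a\subseteq\mathcal H_{a'}$: the condition that the first Fourier coefficient $[f]_i$ vanishes on the larger cusp $C_{i,a_i}$ is weaker than vanishing on the smaller cusp $C_{i,a_i'}$. Since the Friedrichs extension is built from the same Dirichlet form $\|\nabla f\|^2$ restricted to these nested form domains, the min-max principle gives $\lambda_k(\Delta_{a'})\le\lambda_k(\Delta_a)$ for every $k$, and hence $N_a(\lambda)\le N_{a'}(\lambda)$. One has to be slightly careful because $\Delta_a$ acts in $\mathcal L_a$ rather than in a fixed Hilbert space; the clean way is to view all the forms as densely defined closed quadratic forms on the fixed ambient space $L^2(S)$ (with the complementary directions assigned the value $+\infty$, i.e.\ excluded from the form domain), so that the abstract monotonicity principle for quadratic forms applies directly. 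I expect this step to invoke the setup already recorded in \cref{prelim2} and \cref{thmy}.

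Next, for part~\eqref{nal}, fix $\lambda\le 1/4$. Since $\mathcal H_a\subseteq\mathcal H_\infty=H^1(S)$ with the same Dirichlet form, the min-max principle gives $\lambda_k(\Delta)\le\lambda_k(\Delta_a)$ for all $k$, hence $N_a(\lambda)\le N(\lambda)$. For the bound $N(\lambda)\le|\chi(S)|$: when $S$ is compact this is exactly Theorem~\ref{otro}; when $S$ is non-compact of finite area, the spectrum of $\Delta$ in $[0,1/4)$ is discrete, the value $1/4$ is the bottom of the continuous spectrum, and the small eigenvalues (those $\le 1/4$, counting only the $L^2$-eigenvalue $1/4$ with its finite multiplicity if it occurs) are again bounded by $|\chi(S)|=|\chi(\bar S)|+n=2g-2+2n$ — this is precisely the content of Theorem~\ref{otro} for $S_{g,n}$ as stated in the excerpt. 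So $N_a(\lambda)\le N(\lambda)\le|\chi(S)|$, as claimed. The only subtlety is bookkeeping at the endpoint $1/4$: embedded eigenvalue $1/4$ contributes finitely to $N(\lambda)$, and continuous spectrum contributes nothing, so $N(1/4)$ is finite and the Otal–Rosas count still applies.

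Finally, for part~\eqref{nai}, fix $\lambda>1/4$; I must show $N_a(\lambda)\to\infty$ as $a\to\infty$. The mechanism is that the pseudo-Laplacian $\Delta_a$ has purely discrete spectrum, whereas $\Delta$ has continuous spectrum $[1/4,\infty)$; as $a\to\infty$, the discrete spectrum of $\Delta_a$ must fill up $[1/4,\lambda]$. Concretely, I would produce, for each large $N$, a collection of $N$ linearly independent test functions in $\mathcal H_a$ whose Rayleigh quotients are all $\le\lambda$ once $a$ is sufficiently large. The natural candidates are functions supported in a single cusp $C_{i,b_i}$ that genuinely depend on $x$ (so that they lie in $\mathcal H_a$ as long as their first — i.e.\ constant-in-$x$ — Fourier mode vanishes, which one arranges by taking them to be pure higher-frequency modes $e^{2\pi i m x}\,\psi(y)$, $m\ne 0$), or, more robustly, functions of the form $e^{i s x}\chi(y)$ modeled on the generalized eigenfunctions $y^{1/2+it}$ of the continuous spectrum with $1/4+t^2\le\lambda$. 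Cutting off such a wave on a long interval $[a,a']$ of $y$-values inside the cusp produces an approximate eigenfunction of $\Delta$ with eigenvalue near $1/4+t^2$ and with error terms from the cutoff that are $O(1/\log(a'/a))$ in the Rayleigh quotient; choosing $a'/a$ large makes the Rayleigh quotient $<\lambda$, and translating the construction to disjoint $y$-intervals (or using several $t$'s, or several cusps) yields arbitrarily many orthogonal such functions. Since these test functions have first Fourier coefficient vanishing on $C_a$ for $a$ small enough relative to their support — or are higher Fourier modes to begin with — they lie in $\mathcal H_a$, so the min-max principle gives $N_a(\lambda)\ge N$.

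I expect the main obstacle to be part~\eqref{nai}: one must arrange the test functions to simultaneously (i) have controlled Rayleigh quotient below $\lambda$, (ii) be mutually orthogonal in $\mathcal L_a$, and (iii) actually belong to the form domain $\mathcal H_a$ — i.e.\ respect the Fourier-coefficient constraint — and the last point interacts delicately with how far into the cusp the supports reach as $a$ varies. The cleanest route is probably to fix $a$ first, then build the $N$ test functions using higher Fourier modes (which automatically satisfy the constraint, since the constraint only concerns the zeroth mode in our normalization — here one must double-check the precise meaning of ``first Fourier coefficient'' in the paper's convention) supported on $N$ disjoint annuli deep in one cusp, with the annuli long enough in the logarithmic $y$-coordinate that each Rayleigh quotient is $<\lambda$; then let $a\to\infty$ only afterwards, or note that $N_a$ is non-decreasing so a single good $a$ suffices for a lower bound that then persists.
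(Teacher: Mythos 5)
Your monotonicity step (nested form domains $\mathcal H_a\subseteq\mathcal H_{a'}$ for $a\le a'$, same Dirichlet form, min-max) is correct and is how the paper proceeds via \cref{thmy}. But part~\eqref{nal} has a genuine gap at the endpoint $\lambda=1/4$. Nesting $\mathcal H_a\subseteq H^1(S)$ and min-max give $\lambda_j(\Delta_a)\ge\mu_j(\Delta)$, where $\mu_j(\Delta)$ are the min-max values of the Dirichlet form on $H^1(S)$; once the discrete spectrum of $\Delta$ strictly below $1/4$ is exhausted, the $\mu_j$ stabilize at $1/4$ (the bottom of the essential spectrum) and stay there for every further $j$. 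So for $\lambda<1/4$ your comparison does yield $N_a(\lambda)\le N(\lambda)$, but for $\lambda=1/4$ it only yields $N_a(1/4)\le\#\{j:\mu_j(\Delta)\le 1/4\}=\infty$, which is vacuous. To conclude $N_a(1/4)\le N(1/4)$ one needs the \emph{strict} inequality $\lambda_j^a>1/4$ for $j$ past the number of residual eigenvalues, and this is exactly the content of \cref{thmy}: it combines strict monotonicity of $\lambda_j^a$ in $a$ (proved by an analyticity argument on zeroth Fourier coefficients of $\mathcal E_a$-eigenfunctions) with the fact, from \cref{specs}, that $\Delta|_{\mathcal E}$ has purely absolutely continuous spectrum on $[1/4,\infty)$ and hence no $L^2$-eigenvalue there. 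Min-max with nested form domains alone cannot distinguish an eigenvalue at $1/4$ from the bottom of essential spectrum at $1/4$.

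Part~\eqref{nai} also has a concrete error. Your ``cleanest route'' proposes higher Fourier modes $e^{2\pi i m x}\psi(y)$, $m\ne0$, supported on annuli deep in a cusp. These are the wrong test functions: for such $f$ supported where $y\ge y'$ the Rayleigh quotient satisfies $\Ray(f)\ge 4\pi^2 m^2 (y')^2$ (and in any case $\ge\pi^2(y')^2$ by \cref{cusp estimate}), which \emph{diverges} as the annuli go deeper into the cusp rather than approaching $1/4$. The functions that see the bottom of the continuous spectrum are the zeroth-Fourier-mode waves $\chi(y)\,y^{1/2+it}$, but those must be supported strictly below $y=a_i$ to lie in $\mathcal H_a$, so one has to increase $a$ to fit more of them. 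The paper avoids constructing test functions by hand: it deduces part~\eqref{nai} from \cref{thmy}, which shows $\lambda_j^a\searrow 1/4$ as $a\to\infty$ for each $j$ beyond the residual eigenvalues, using that the $\mathcal E_a$ exhaust $\mathcal E$ whose continuous spectrum covers $[1/4,\infty)$ by \cref{specs}. Lastly, a small point: the bound $N(1/4)\le|\chi(S)|$ comes from \cite[Th\'eor\`eme 2]{OR} only in the orientable case; the paper cites \cite[Theorem 1.5]{BMM} for general $S$, whereas your sketch implicitly assumes orientability.
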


The estimate against $|\chi(S)|$ follows from \cite[Th\'eor\`eme 2]{OR} in the orientable and from \cite[Theorem 1.5]{BMM} in the general case, the other assertions follow from monotonicity and \cref{specs} below.

One of the main ingredients in the work of Otal \cite{Ot} and Otal-Rosas \cite{OR} is the topology of nodal domains and estimates on the bottom of their Dirichlet spectrum when their `topology is small'.
This led to the definition of \emph{analytic systole} $\Lambda(S)$ in \cite{BMM}.
The corresponding concept in our context is caught by the definition to follow.

Let $\Lambda_{a}(S)$ be the infimum of the bottom of the Dirichlet spectrum over all embedded topological discs, M\"obius bands (if $S$ is non-orientable), and annuli in $S$ with smooth boundaries, where the annuli are of the following three types:
Either their boundary curves are inessential (i.e., homotopic to a point), or they are freely homotopic to a closed geodesic in $S$, or they are freely homotopic to a horocycle $H_{a_i}$ for some $1\le i\le n$.
In the last case we require in addition that, for each horocycle $H_y$ above $H_a$, if the interior of such an annulus intersects $H_y$, then its boundary meets $H_y$ in at least two points.
The latter property reflects the condition on the Fourier coefficients of functions in the domain of $\Delta_a$.
Our first main result is as follows.

\begin{theo}\label{mainest}
For any $a>b$, we have $\Lambda_{a}(S)>1/4$.
\end{theo}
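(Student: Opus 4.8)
The plan is to prove the stronger, uniform statement: there is $\varepsilon=\varepsilon(S,a)>0$ such that $\lambda_0(\Omega)\ge 1/4+\varepsilon$ for every embedded $\Omega$ admissible in the definition of $\Lambda_a(S)$, where $\lambda_0(\Omega)$ denotes the bottom of the Dirichlet spectrum of $\Omega$. I would distinguish three groups: (i) topological discs and annuli with inessential boundary; (ii) M\"obius bands and annuli freely homotopic to a closed geodesic; (iii) annuli freely homotopic to some horocycle $H_{a_i}$, subject to the two-point condition.

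For (i), the boundary curves of $\Omega$ being inessential, the inclusion $\Omega\hookrightarrow S$ lifts to an isometry of $\Omega$ onto a relatively compact domain $\widetilde\Omega\subseteq\H$ --- the annular region bounded by two disjoint Jordan curves --- with $\Area(\widetilde\Omega)=\Area(\Omega)\le|S|$. By the Faber--Krahn inequality in $\H$, together with the monotonicity of $\lambda_0$ of a geodesic disc in its area, $\lambda_0(\widetilde\Omega)\ge\lambda_0(B)$ where $B\subseteq\H$ is the geodesic disc of area $|S|$; and $\lambda_0(B)>\lambda_0(\H)=1/4$, since otherwise a ground state of $B$, extended by zero, would attain the infimum $1/4$ of the Rayleigh quotient on $H^1(\H)$, which it does not. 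For (ii), such subsurfaces enter the definition of the analytic systole $\Lambda(S)$ of \cite{BMM}, whence $\lambda_0(\Omega)\ge\Lambda(S)>1/4$ by (the proof of) the corresponding result there.

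Now let $\Omega$ be as in (iii), and assume $\lambda_0(\Omega)\le\pi^2$ (otherwise there is nothing to prove). Choose $\alpha_i$ so large that the horoballs $\{y\ge\alpha_i\}$ in the cusps are embedded, pairwise disjoint, and $\alpha_i\ge\max(a_i,1)$; set $C^\sharp=\bigcup_i\{y\ge\alpha_i\}$ and $S^\sharp=S\setminus C^\sharp$, a compact hyperbolic surface with horocyclic boundary. By the two-point condition, for every cusp $i$ and every $y\ge\alpha_i$ the restriction to the circle $H_{i,y}$ of any $f\in H^1_0(\Omega)$ vanishes at some point of $H_{i,y}$ (if the interior of $\Omega$ meets it, $\partial\Omega$ meets it in at least two points; otherwise the trace is identically zero there). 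A function on a circle of length $L$ vanishing at a point obeys the sharp inequality $\int f^2\,ds\le(L/\pi)^2\int|\partial_sf|^2\,ds$, and $H_{i,y}$ has length $1/y$; rewriting this in the cusp coordinates $(x,y)$, where $|\nabla f|^2\,dV=(f_x^2+f_y^2)\,dx\,dy$ and the curve $\{y=\mathrm{const}\}$ has $\partial_s=y\partial_x$, and integrating over $y\ge\alpha_i$ (using $\alpha_i\ge1$), one obtains
\[
\int_{\Omega\cap C^\sharp}|\nabla f|^2\;\ge\;\pi^2\int_{\Omega\cap C^\sharp}f^2,\qquad f\in H^1_0(\Omega).
\]
Since $\pi^2\ge\lambda_0(\Omega)$, an elementary manipulation of Rayleigh quotients shows that for every $f\in H^1_0(\Omega)$ the restriction $f|_{\Omega\cap S^\sharp}$ has Rayleigh quotient, for the mixed problem on $\Omega\cap S^\sharp$ with Dirichlet conditions along $\partial\Omega$ and Neumann conditions along $\Omega\cap\partial S^\sharp$, at most that of $f$; hence $\lambda_0(\Omega)\ge\lambda_0^{\mathrm{mix}}(\Omega\cap S^\sharp)$. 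Now $\Omega\cap S^\sharp$ is a topologically simple subdomain of the fixed compact surface $S^\sharp$ --- its components are discs, inessential annuli, and at most one annulus freely homotopic to a boundary component of $S^\sharp$ --- of area at most $|S|$, and it therefore suffices to show that the mixed ground value of any such subdomain is $\ge 1/4+\varepsilon_0$ for some $\varepsilon_0=\varepsilon_0(S,a)>0$.

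This last bound is the main obstacle; it is the mixed-problem analogue, for the fixed compact surface $S^\sharp$, of the inequality $\Lambda(S)>1/4$. For the disc and inessential-annulus components it again reduces to the area bound and Faber--Krahn; for an annulus component freely homotopic to a boundary component of $S^\sharp$ one needs the methods of \cite{BMM}, which combine the topology of such subsurfaces with the curvature $-1$ (via the linear isoperimetric inequality and Gauss--Bonnet) and are insensitive to replacing Dirichlet by Neumann conditions on part of the boundary. Uniformity over all admissible $\Omega$ is then obtained by compactness: a sequence of such subdomains whose mixed ground values tended to $1/4$ would, by the area bound, have ground states concentrating after passing to a subsequence on a fixed relatively compact part of $S^\sharp$, producing a limiting minimizer with value $1/4$ and a nonempty Dirichlet portion of its boundary --- impossible, since $1/4$ is not attained on $H^1(\H)$. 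This is precisely the step where the hyperbolic geometry, not merely the topology, is decisive, and where the two-point condition is indispensable: without it an admissible annulus could fill out a sub-cusp $\{y\ge c\}$, whose $\lambda_0$ is arbitrarily close to $1/4$, whereas the condition confines all such deep mass to the region $y\ge\alpha_i$, where the Poincar\'e estimate above yields the clean lower bound $\pi^2$.
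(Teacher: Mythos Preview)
Your decomposition into types and your Poincar\'e estimate on horocycles (from the two-point condition) both match the paper; the latter is exactly Lemma~\ref{cusp estimate}. But there are two genuine problems.

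First, in (ii) you invoke $\lambda_0(\Omega)\ge\Lambda(S)>1/4$. This is false: as the paper notes explicitly, $\Lambda(S)=1/4$ for every noncompact hyperbolic surface of finite area, precisely because annuli homotopic to horocycles drive the infimum down. What you need is the narrower fact that $\lambda_0$ of an annulus around a \emph{closed geodesic}, or a M\"obius band, exceeds $1/4$ uniformly; the paper quotes the explicit bound $\tfrac14+\min\{\pi/|S|,\sys(S)^2/|S|^2\}$ from \cite{M1} and \cite[Theorem~4.1(1)]{BMM2}. This is repairable.

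The gap in (iii) is more serious. Your reduction $\lambda_0(\Omega)\ge\lambda_0^{\mathrm{mix}}(\Omega\cap S^\sharp)$ via the cusp estimate is correct, but your handling of the mixed problem is not. Faber--Krahn does \emph{not} control mixed eigenvalues: a disc with a Neumann arc can have $\lambda_0^{\mathrm{mix}}$ arbitrarily small, so ``for the disc and inessential-annulus components it again reduces to the area bound and Faber--Krahn'' is simply wrong. The claim that the methods of \cite{BMM} are ``insensitive to replacing Dirichlet by Neumann conditions'' is unsubstantiated. And your compactness argument does not close: the limiting minimizer lives in $H^1(S^\sharp)$, not $H^1(\H)$, and you have not shown that the Dirichlet portion $\partial\Omega\cap S^\sharp$ survives in any useful sense in the limit. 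The paper circumvents all of this by lifting not to $\H$ or staying in $S^\sharp$, but to the \emph{parabolic cylinder} $\mathcal{C}=\Gamma_i\backslash\H$, where $\lambda_0(\mathcal{C})=1/4$ is not an eigenvalue. A result from \cite{P} then forces any minimizing sequence with Rayleigh quotients tending to $1/4$ to converge weakly to zero in $L^2(\mathcal{C})$; combined with the cusp estimate this localizes the mass to a fixed compact annulus in $\mathcal{C}$. A cutoff produces test functions whose support misses a geodesic arc joining the two boundary components of that annulus, hence lies in a disc, and now Faber--Krahn legitimately gives the contradiction. The passage to $\mathcal{C}$ is the key device you are missing.
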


In contrast, the analytic systole $\Lambda(S)=1/4$ for noncompact hyperbolic surfaces of finite area (or even finite type).
Extending the ideas in the proof of Theorem 1.5 in \cite{BMM}, we obtain our second main result.

\begin{theo}\label{main}
For any $a>b$, the associated \emph{pseudo-Laplacian} $\Delta_a$ has at most $|\chi(S)|$ eigenvalues in $[0,\Lambda_a(S)]$.
\end{theo}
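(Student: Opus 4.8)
The plan is to adapt the proof of Theorem 1.5 in \cite{BMM}, itself a reworking of \cite{OR}, feeding in \cref{mainest} in place of the bound $\Lambda(S)\ge 1/4$ used there, while accounting for the two features proper to the pseudo-Laplacian: its eigenfunctions are constrained in the cusps, and the systole $\Lambda_a(S)$ involves the new class of horocyclic annuli. First I would argue by contradiction: assume $\Delta_a$ has at least $|\chi(S)|+1$ eigenvalues in $[0,\Lambda_a(S)]$ and let $E$ be the span of associated eigenfunctions. Elements of $E$ are continuous on $S$, smooth on $S\setminus H_a$, satisfy $\Delta f=\lambda f$ (with $\lambda$ the respective eigenvalue) in the interiors of $S_a$ and of $C_a$, have vanishing Fourier coefficient $[f]_i$ on each $C_{i,a_i}$ by the definition of $\mathcal H_a$, and decay exponentially into the cusps. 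As a preliminary one records enough regularity across $H_a$, together with continuity up to the punctures (extending $f$ by $0$), so that for every $0\ne f\in E$ the set $Z_f\cup\{p_1,\dots,p_n\}$ is a compact graph in $\bar S$ whose complementary regions are the nodal domains of $f$ in $S$. The goal is then to show $\dim E\le|\chi(S)|$.

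The analytic half is the assertion that the nodal domains that the Otal--Rosas/BMM scheme extracts from $E$ cannot be topologically simple. If $\Omega$ is such a connected component of $S\setminus Z_f$ with smooth boundary, then $f|_\Omega$, extended by zero, lies in $H^1_0(\Omega)$ and is a valid test function for the (ordinary) Dirichlet problem on $\Omega$, so the bottom of its Dirichlet spectrum is at most $\Lambda_a(S)$: this is immediate for a genuine eigenfunction, and for the linear combinations produced by the inductive scheme one argues as in \cite{OR,BMM}. On the other hand, by the very definition of $\Lambda_a(S)$ as an infimum and by \cref{mainest}, an embedded disc, M\"obius band, or annulus of one of the three admissible types has Dirichlet bottom $\ge\Lambda_a(S)>1/4$. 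Comparing forces equality, which is ruled out by unique continuation; a small perturbation of $f$ inside $E$ disposes of the endpoint $\lambda=\Lambda_a(S)$ should it be an eigenvalue of $\Delta_a$. Hence every such $\Omega$ is incompressible with $\chi(\Omega)\le-1$. The point special to $\Delta_a$ enters here: since $[f]_i\equiv0$ on $C_{i,a_i}$, the restriction of $f$ to any horocycle $H_y$ with $y\ge a_i$ has vanishing mean, hence changes sign, so $Z_f$ meets $H_y$ in at least two points whenever $\Omega$ does. This is exactly the extra condition built into the definition of $\Lambda_a(S)$, and it guarantees that a nodal domain running up a cusp is either of the excluded horocyclic-annulus type or genuinely complex; in either case it is caught by the count below.

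The topological half is a counting argument in $\bar S$. After a perturbation of $f$ inside $E$ one may assume $Z_f$ has only ordinary multiple points — singular nodal points only help, as they make $\chi(Z_f)$ more negative — so the complementary regions of the compact graph $G=Z_f\cup\{p_1,\dots,p_n\}$ are precisely the nodal domains of $f$, and additivity of the Euler characteristic together with $\chi(\Omega)\le-1$ controls their number. Since a bound on the number of nodal domains of a single function does not by itself bound $\dim E$ (there is no Courant-type inequality in that direction), one must run the incidence argument of \cite{OR,BMM}: it cuts $S$ along suitable nodal curves and arcs, restricts $E$ to the resulting incompressible pieces while tracking the dimension, and combines this with the Euler-characteristic bookkeeping, keeping careful account of the punctures and of the horocyclic boundaries. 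The outcome is $\dim E\le-\chi(S)=|\chi(S)|$, contradicting the assumption.

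The hard part, exactly as in \cite{BMM}, is the seam between these two halves: producing from a large $E$ the functions whose nodal domains one can simultaneously control analytically and organize topologically, and running the cutting-and-restriction induction without losing dimension. In the present setting this is compounded by two further points needing care. First, the reduced regularity of $\Delta_a$-eigenfunctions across the horocycles $H_a$, and the need to keep the constraint $[f]_i\equiv0$ intact — or to verify it is automatic — when $E$ is restricted to a subsurface or when one cuts along a nodal set inside a cusp. Second, one must check that the horocyclic-annulus clause in the definition of $\Lambda_a(S)$, with its two-intersection-points requirement, is precisely what the topological argument needs, so that the pieces produced by cutting near a cusp are admissible domains to which \cref{mainest} applies. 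Once these are in place, the Otal--Rosas/BMM machinery delivers the bound.
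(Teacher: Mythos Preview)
Your proposal is correct and takes essentially the same approach as the paper: both adapt the Otal--Rosas/\cite{BMM} machinery, feeding in \cref{mainest} in place of the usual analytic-systole bound, and you correctly isolate the two new ingredients---the reduced regularity across $H_a$ and the observation that the vanishing zeroth Fourier coefficient forces $Z_f$ to meet every horocycle above $H_a$ in at least two points, which is precisely the Type~5 admissibility condition. The one place your sketch drifts from what \cite{BMM} and the paper actually do is the line ``after a perturbation of $f$ inside $E$ one may assume $Z_f$ has only ordinary multiple points'': for linear combinations of eigenfunctions with distinct eigenvalues this is not available, and the paper instead follows \cite{BMM} in working with the $\varepsilon$-nodal sets $Z_\varphi(\varepsilon)=\{|\varphi|\le\varepsilon\}$ for $\varphi$-regular $\varepsilon$, whose boundaries are piecewise-analytic loops (with corners only on $H_a$) and which carry the topological argument without any genericity assumption on $Z_\varphi$ itself.
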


Since $\Lambda_a(S)>1/4$, \cref{main} sharpens \cref{mainq}.\ref{nal}.

\subsection{Structure of the article}\label{substr}
In \cref{prelim2}, we discuss Friedrichs extensions in general and in the case of pseudo-Laplacians $\Delta_a$.
In \cref{speclap}, we collect material from the spectral theory of Laplacians of finite area hyperbolic surfaces.
For the convenience of the reader, we recall some properties of Eisenstein series from Iwaniec's book \cite{Iw}.
\cref{subcdv} is devoted to the spectral theory of pseudo-Laplacians.
The case of hyperbolic surfaces with more than one cusp presents some new features and difficulties in comparison with the case of one cusp, considered in \cite{CV}.
Theorems \ref{thmxx}, \ref{thmyy}, and \ref{proyy}, in which we clarify the structure of eigenfunctions, bear witness of these new features.
Albeit not all our results in \cref{subcdv} are needed in the proofs of Theorems \ref{mainest} and \ref{main}, they should be useful in further investigations and applications of the spectral theory of pseudo-Laplacians.
In \cref{secana}, we discuss the proof of \cref{mainest},  in \cref {secnod} the proof of \cref{main}.

\section{Preliminaries}\label{prelim2}

We recall the following version of the Friedrichs extension; see \cite[p.\,600 f]{T1}.
Let $J\colon H^1\rightarrow H^0$ be a continuous inclusion of Hilbert spaces with $H^1$ dense in $H^0$.
Obtain a self-adjoint operator $A$ in $H^0$ with domain
\begin{align}\label{doma}
    \mathcal{D}(A) = \{ u\in H^1 \mid \text{$\la u,v\ra_1 \le C_u|v|_0$ for all $v\in H^1$} \}
\end{align}
by the defining equation
\begin{align}\label{equa}
    \la Au,v\ra_0 = \la u,v\ra_1.
\end{align}
We have $A=(JJ^*)^{-1}$ with $\mathcal{D}(A)=\im(JJ^*)$.
Furthermore, the closed sesquilinear form $\alpha$ associated to $A$ has domain $H^1$ and is given by
\begin{align}\label{ses1}
    \alpha(u,v) = \la u,v\ra_1 - \la u,v\ra_0 = \la (A-1)u,v\ra_0,
\end{align}
where we need $u\in\mathcal{D}(A)$ for the latter equality;
compare with \cite[Theorem VI.2.1]{Ka95}.

We recall now the general setup in \cite[Section 1]{CV}.
Let $H^1$ be a closed subspace of $H^1(S)$, $H^0$ be its closure in $ L^2(S)$,
and $A=1+\Delta_{H^1}$ be the self-adjoint operator in $H^0$ as above.
Since $H^1$ is a closed subspace of $H^1(S)$,
the closed sesquilinear form $\alpha$ associated to $A$ has domain $H^1$ and is given by
\begin{align}\label{ses2}
    \alpha(u,v) = \la\nabla u,\nabla v\ra_0 = \la\Delta_{H^1}u,v\ra_0,
\end{align}
where we need $u\in D(\Delta_{H^1})$ for the latter equality.
We have
\begin{align}\label{calh}
\begin{split}
    \la u,v\ra_0 +\la\nabla u,\nabla v\ra_0
    &=\la u,v\ra_1
    = \la u,v^\top + v^\bot \ra_1
    = \la u,v^\top\ra_1 \\
    &= \la Au,v^\top\ra_0
    = \la u,v^\top\ra_0 + \la\Delta_{ H^1}u,v^\top\ra_0,
 \end{split}
\end{align}
for all $u\in\mathcal D(A)=\mathcal D(\Delta_{H^1})$ and $v\in H^1(S)$,
where we decompose $v=v^\top+v^\bot$ with $v^\top\in H^1$ and $v^\bot\in H^{1,\bot}$,
the orthogonal complement of $H^1$ in $H^1(S)$.

Now choose the closed subspace $\mathcal{H}_a$ of $H^1(S)$ as in the introduction,
and let $\Delta_a=\Delta_{\mathcal H_a}$ be the Friedrichs extension with respect to its closure $\mathcal{L}_a\subseteq L^2(S)$ as in the introduction.
Let $S_a$ the part of the hyperbolic surface $S$ below the horocycles in $H_a$; that is, $S_a$ is the complement of $C_a$.

\begin{lem}
If $u\in \mathcal D(\Delta_{a})$, then $u|_{S_a}\in H^2(S_a)$, $u|_{C_a}\in H^2(C_a)$, and
\[\Delta u|_{S_a}=(\Delta_{a}u)|_{S_a} \quad\text{and}\quad \Delta u|_{C_a}=(\Delta_{a}u)|_{C_a}.\]
\end{lem}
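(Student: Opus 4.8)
The plan is to exploit the characterization of $\mathcal{D}(\Delta_a)$ coming from the Friedrichs construction together with elliptic regularity, tested against functions supported in the interior of $S_a$ and in the interior of $C_a$ separately. Let $u\in\mathcal{D}(\Delta_a)$ and set $f=\Delta_a u\in\mathcal{L}_a\subseteq L^2(S)$. By the defining equation of the Friedrichs extension, $\la\nabla u,\nabla v\ra_0=\la f,v\ra_0$ for all $v\in\mathcal{H}_a$.

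First I would treat the interior of $S_a$. For any test function $v\in C_c^\infty(\mathrm{int}\,S_a)$, the first Fourier coefficient condition defining $\mathcal{H}_a$ is vacuous, since $v$ is supported away from all the cusps $C_{i,a_i}$; hence $v\in\mathcal{H}_a$, and the weak equation $\la\nabla u,\nabla v\ra_0=\la f,v\ra_0$ says precisely that $\Delta u=f$ holds in the distributional sense on $\mathrm{int}\,S_a$. Since $f\in L^2$ and $\Delta$ is elliptic with smooth (hyperbolic) coefficients, interior elliptic regularity gives $u\in H^2_{\mathrm{loc}}(\mathrm{int}\,S_a)$; a standard boundary regularity argument up to the smooth horocyclic boundary $H_a$ (using that $u\in H^1$ already and that there is no boundary condition on $v$ along $H_a$ from the $S_a$ side beyond $v\in\mathcal{H}_a$) upgrades this to $u|_{S_a}\in H^2(S_a)$, with $\Delta u|_{S_a}=(\Delta_a u)|_{S_a}$.

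Next I would treat the interior of $C_a$. Here a test function $v\in C_c^\infty(\mathrm{int}\,C_a)$ is not automatically in $\mathcal{H}_a$: we need its first Fourier coefficient $[v]_i$ to vanish on each $C_{i,a_i}$. But any such $v$ whose zeroth (and higher) Fourier modes are unrestricted while its first Fourier coefficient vanishes is admissible, and more importantly the orthogonal complement $\mathcal{L}_a^\perp$ inside $L^2(H)$ consists exactly of functions supported in $C_a$ that depend only on $y$ there, i.e.\ pure zeroth-Fourier-mode functions. Since $f=\Delta_a u\in\mathcal{L}_a$, the equation $\la\nabla u,\nabla v\ra_0=\la f,v\ra_0$ persists for all $v\in C_c^\infty(\mathrm{int}\,C_a)$ with vanishing first Fourier coefficient, which is enough to conclude $\Delta u=f$ distributionally on $\mathrm{int}\,C_a$ after separating Fourier modes: on each Fourier mode $\ne 1$ the equation holds by density of such $v$, and on the first Fourier mode $[u]_i$ vanishes on $C_{i,a_i}$ by definition of $\mathcal{H}_a$, so $\Delta[u]_i = 0 = [f]_i$ there trivially (indeed $[f]_i=0$ on $C_{i,a_i}$ since $f\in\mathcal{L}_a$). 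Elliptic regularity then gives $u|_{C_a}\in H^2(C_a)$ and $\Delta u|_{C_a}=(\Delta_a u)|_{C_a}$, with boundary regularity along $H_a$ handled as before.

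The main obstacle is the bookkeeping along the common boundary $H_a$: one must make sure that the piecewise $H^2$-regularity is compatible, i.e.\ that the matching of $u$ and its normal derivative across $H_a$ (implicit in $u\in H^1(S)$ and in the absence of any jump condition in the weak formulation) is not violated, and that the boundary-regularity step does not secretly require a boundary condition that the elements of $\mathcal{H}_a$ fail to impose. I expect this to be routine given that $\mathcal{H}_a$ is only a \emph{finite-codimension} (per horocycle, mode-by-mode) restriction of $H^1(S)$ and that $H_a$ is a smooth hypersurface, so that Colin de Verdi\`ere's local analysis in \cite{CV}, adapted cusp-by-cusp, applies verbatim; the one genuinely new point compared to the single-cusp case of \cite{CV} is simply that the argument is carried out simultaneously on all $n$ cusps, which introduces no new analytic difficulty once the Fourier-mode separation above is in place.
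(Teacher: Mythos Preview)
Your approach is essentially the paper's: on $S_a$ you use that $C_c^\infty(\mathrm{int}\,S_a)\subset\mathcal{H}_a$ and invoke elliptic regularity; on $C_a$ you split test functions into the horocyclic-average part and its complement. The paper phrases the same split as ``$v$ constant along horocycles'' (for which $\la u,\Delta v\ra_0=0$ since $u\in\mathcal{L}_a$ and $\Delta v$ is again constant along horocycles) versus ``$v$ with vanishing averages over horocycles'' (which lie in $\mathcal{H}_a$, so the Friedrichs identity applies directly).

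There is, however, a genuine slip that makes your argument incorrect as written. In the paper's terminology the ``first Fourier coefficient'' $[f]_i$ is the \emph{zeroth}-order mode, i.e.\ the horocyclic average, not the mode $k=1$. You yourself identify $\mathcal{L}_a^\perp$ correctly as ``pure zeroth-Fourier-mode functions'', but then you write that for $v$ the ``zeroth (and higher) Fourier modes are unrestricted while its first Fourier coefficient vanishes'' and that ``on each Fourier mode $\ne 1$ the equation holds''. Under the $k=1$ reading, your claim that a pure zeroth-mode test function lies in $\mathcal{H}_a$ is false (it does not: it is precisely these functions that are excluded), and your claim that the $k=1$ coefficient of $u$ vanishes on $C_{i,a_i}$ is likewise false (it is the $k=0$ coefficient that vanishes). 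Replacing ``$1$'' by ``$0$'' throughout repairs the argument and then it coincides with the paper's.

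Finally, your closing paragraph about matching normal derivatives across $H_a$ is not needed for this lemma, which only asserts $H^2$-regularity on each side separately; the transmission conditions are derived one step later in the paper.
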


\begin{proof}
Clearly, $\Delta u|_{S_a}$ exists in $L^2({S_a})$ in the sense of distributions.
Hence $u|_{S_a}$ belongs to $H^2(S_a)$, by the ellipticity of $\Delta$, and $\Delta u|_{S_a}=(\Delta_{a}u)|_{S_a}$.

For $u|_{C_a}$, we separate two complementary classes of $v\in C^\infty_c(C_a)$.
The first class consists of functions $v$ which are constant along horocycles of $C_a$.
Then, by the constant mean curvature of the horocycles, $\Delta v$ is also constant along horocycles of $C_a$ and hence
\begin{align*}
    \la\nabla u,\nabla v\ra_0 = \la u,\Delta v\ra_0 = 0.
\end{align*}
The second class consists of functions $v$ which have vanishing averages over horocycles of $C_a$.
Then $v=v^\top\in\mathcal H_a$ and hence
\begin{align*}
    \la\Delta_{a}u,v\ra_0 = \la\nabla u,\nabla v\ra_0 = \la\nabla u|_{C_a},\nabla v\ra_0 = \la u|_{C_a},\Delta v\ra_0.
\end{align*}
In conclusion, $\Delta  u|_{C_a} =(\Delta_{a}u)|_{C_a}$ exists in the sense of distributions.
In particular, $u|_{C_a}$ belongs to $H^2(C_a)$.
\end{proof}

Let $u\in\mathcal H_a$, and suppose that $u|_{S_a}\in H^2(S_a)$ and $u|_{C_a}\in H^2(C_a)$.
Then
\begin{align}\label{h2}
\begin{split}
    \la u,v\ra_1
    &= \la u,v\ra_0 +\la\nabla u,\nabla v\ra_0 \\
    &= \la u,v\ra_0 + \la \Delta  u|_{S_a},v\ra_{S_a,0} + \la \Delta u|_{C_a},v\ra_{C_a,0} + T_uv \\
    &= \la u,v\ra_0 + \la \Delta u,v\ra_{0} + T_uv
\end{split}
\end{align}
for all $v\in H^1(S)$, where $\nu$ denotes the outward unit normal of $S_a$ along $H_a$
and $T_u\in H^{-1}(S)$ is given by
\begin{align}\label{tuv}
    T_uv=\int_{H_a} (\nabla_\nu u|_{S_a} - \nabla_\nu u|_{C_a}) ~ v.
\end{align}
Note here that, in this integral, the corresponding traces along $H_a$ exist and that the traces of $v|_{S_a}$ and $v|_{C_a}$ along $H_a$ coincide in $H^{1/2}(H_a)\subseteq L^2(H_a)$.
For the right hand side to be at most $C_u|v|_0$ for all $v\in\mathcal H_a$,
we need that $T_u$ vanishes on $\mathcal H_a$ and hence that $\nabla_\nu u|_{S_a} - \nabla_\nu u|_{C_a} = \text{\rm loc.\,const}$ along $H_a$.
Hence we arrive at

\begin{prop}\label{h2d}
The domain $D(\Delta_{a})$ of $\Delta_{a}$ consists of all $u\in\mathcal{H}_a$ such that $u|_{S_a}\in H^2(S_a)$, $u|_{C_a}\in H^2(C_a)$,
and such that, along $H_a$,
\begin{align*}
	u|_{S_a}=u|_{C_a} 
	\quad\text{and}\quad
	\nabla_\nu u|_{S_a} = \nabla_\nu u|_{C_a} + \text{\rm loc.\,const}.
\end{align*}
For any $u\in D(\Delta_{a})$, we have
\begin{align*}
    \Delta_{a}u = \Delta u|_{S_a} + \Delta u|_{C_a}.
\end{align*}
\end{prop}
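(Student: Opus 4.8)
\emph{Overview.} I would prove the two inclusions separately, in each case feeding the information already available into the identity \eqref{h2}, and read off the formula $\Delta_a u=\Delta u|_{S_a}+\Delta u|_{C_a}$ as a by-product. Throughout, write $g:=\nabla_\nu u|_{S_a}-\nabla_\nu u|_{C_a}\in L^2(H_a)$ for the jump of the normal derivative, so that $T_uv=\int_{H_a}gv$ depends on $v$ only through its trace on $H_a$, and $f:=\Delta u|_{S_a}+\Delta u|_{C_a}\in L^2(S)$ for the glued Laplacian.

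\emph{Necessity.} For $u\in\mathcal D(\Delta_a)$, the preceding lemma already gives $u|_{S_a}\in H^2(S_a)$, $u|_{C_a}\in H^2(C_a)$ and $f=\Delta_a u$ in $L^2(S)$, while $u|_{S_a}=u|_{C_a}$ along $H_a$ is automatic from $u\in\mathcal H_a\subseteq H^1(S)$. To get the condition on normal derivatives, I would compare \eqref{h2}, which reads $\la u,v\ra_1=\la u,v\ra_0+\la f,v\ra_0+T_uv$, with the defining equation \eqref{equa}, which reads $\la u,v\ra_1=\la(1+\Delta_a)u,v\ra_0=\la u,v\ra_0+\la\Delta_a u,v\ra_0$; since $f=\Delta_a u$, these force $T_uv=0$ for all $v\in\mathcal H_a$. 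The final step is to convert this into a pointwise statement about $g$: I claim every smooth $w$ on $H_a$ with $\int_{H_{i,a_i}}w=0$ for all $i$ is the trace of some $v\in\mathcal H_a$ --- extend $w$ into $C_{i,a_i}$ as $\chi(y)w(x)$ for a cut-off $\chi$ equal to $1$ at $y=a_i$ and supported near it, so that $[v]_i=\chi(y)\int_0^1w\,dx\equiv0$, and continue $v$ across $H_a$ into $S_a$ in any $H^1$ way --- whence $\int_{H_a}gw=0$ for all such $w$, and so $g$ is $L^2$-orthogonal on each circle $H_{i,a_i}$ to the mean-zero functions, i.e.\ constant there; this is ``loc.\,const.''.

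\emph{Sufficiency and the formula.} Conversely, given $u\in\mathcal H_a$ with the stated $H^2$-regularity and with $g\equiv c_i$ on each $H_{i,a_i}$, I would first check $f\in\mathcal L_a$: on $C_{i,a_i}$ the hyperbolic Laplacian is $-y^2(\partial_x^2+\partial_y^2)$, which commutes with averaging over $x\in\R/\Z$, so $[f]_i=-y^2\,\partial_y^2[u]_i$, which vanishes because $[u]_i\equiv0$ on $C_{i,a_i}$; hence $f$ is orthogonal to all functions supported in $C_a$ and depending only on $y$ there. Next, the trace of any $v\in\mathcal H_a$ on $H_{i,a_i}$ has average $[v]_i(a_i)=0$, so $T_uv=\sum_i c_i\int_{H_{i,a_i}}v=0$. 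Then \eqref{h2} collapses to $\la u,v\ra_1=\la u+f,v\ra_0$ for all $v\in\mathcal H_a$, which both bounds $|\la u,v\ra_1|\le(|u|_0+|f|_0)|v|_0$, giving $u\in\mathcal D(\Delta_a)$ by \eqref{doma}, and (using density of $\mathcal H_a$ in $\mathcal L_a$ and $u+f\in\mathcal L_a$) gives $(1+\Delta_a)u=u+f$, i.e.\ $\Delta_a u=\Delta u|_{S_a}+\Delta u|_{C_a}$; this simultaneously supplies the formula in the necessity direction.

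\emph{Main obstacle.} Everything above is bookkeeping with \eqref{h2}, \eqref{doma}, \eqref{equa} and the preceding lemma, except for one genuinely structural point: identifying the trace space of $\mathcal H_a$ on $H_a$ with (a dense subspace of) the functions of zero average on each $H_{i,a_i}$, dualized against the elementary fact that the orthogonal complement of the mean-zero functions on a circle consists of the constants. This is precisely where the Fourier condition defining $\mathcal H_a$ turns into the ``locally constant jump'' transmission condition at $H_a$, and the only place the cuspidal structure is used. A minor care point is the verification that the glued Laplacian $f$ lands in $\mathcal L_a$, which again uses that same condition.
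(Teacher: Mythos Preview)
Your proof is correct and follows the same route as the paper, which derives the proposition directly from \eqref{h2}, \eqref{doma}, \eqref{equa} and the preceding lemma; the paper in fact writes essentially nothing beyond ``Hence we arrive at'' after the discussion leading to \eqref{tuv}. You are more explicit than the paper on two points it leaves implicit: the identification of the trace space of $\mathcal H_a$ on $H_a$ with the mean-zero functions (turning $T_u|_{\mathcal H_a}=0$ into $g=\text{loc.\,const}$ and back), and the check that the glued Laplacian $f$ lies in $\mathcal L_a$ so that the identity $\la(1+\Delta_a)u,v\ra_0=\la u+f,v\ra_0$ for $v\in\mathcal H_a$ really yields $\Delta_a u=f$.
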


A priori, $\Delta u|_{S_a}$ and $\Delta u|_{C_a}$ in the above formula are only defined on $S_a$ and $C_a$,
but we extend them by zero to the respective rest of $S$.

\begin{rem}
Extending by zero on respective rests of $S$ as above, we also have
\begin{align*}
	\mathcal D(\Delta_{a})
	= H^2(S_a) \oplus_\tau \big\{v\in H^2(C_a) \,\big| \int_{H_{i,t}} v=0 \,~ \text{for all $t \ge a_i$, $1\le i\le n$} \big\},
\end{align*}
where the transfer condition $\tau$ is
\begin{align*}
	\tau(u,v): \quad u|_{H_{a}} = v|_{H_{a}} \quad\text{and}\quad \nabla_\nu u &= \nabla_\nu v + \text{\rm loc.\,const}
\end{align*}
in $L^2(H_a)$; compare with (8.3) in \cite{LP}.
In contrast,
\begin{align*}
	H^2(S)
	= H^2(S_a) \oplus_\sigma H^2(C_a),
\end{align*}
where the transfer condition $\sigma$ is now stronger,
\begin{align*}
	\sigma(u,v): \quad
	u|_{H_a} = v|_{H_a}  \quad\text{and}\quad \nabla u = \nabla v
\end{align*}
in $L^2(H_a)$ and $L^2(H_a,TS|_{H_a})$, respectively.
\end{rem}

\begin{rem}
Letting $E=\{u\in H^1(S) \mid u|_{S_a}\in H^2(S_a), u|_{C_a}\in H^2(C_a)\}$,
we get from \eqref{h2} the weak Laplacian for $u\in E$,
\begin{align*}
	\la u,\Delta v\ra_0
	= \la \Delta u|_{S_a} + \Delta u|_{C_a},v\ra_0 + \int_{H_a} (\nabla_\nu u|_{S_a} - \nabla_\nu u|_{C_a})v
\end{align*}
for all $u\in E$ and $v\in C^\infty_c(X)$.
Hence $\nabla_\nu u|_{S_a} - \nabla_\nu u|_{C_a} = \text{\rm loc.\,const} =: \vf_u$ along $H_a$ for $u\in\mathcal D(\Delta_{a})$ just means  that $\Delta_{\mathrm{weak}}u-\vf_u T_a\in L^2(X)$, where $T_av$ is the locally constant function on $H_a$ which assigns to points on $H_{a_i}$  the Fourier coefficient of order zero of $v$ along $H_{a_i}$;
compare with \cite[Th\'eor\`eme 1]{CV}, where the case $n=1$ is treated.
\end{rem}

\cref{h2d} has the following

\begin{cor}\label{cinf}
If $u\in D(\Delta_a)$ is an eigenfunction of $\Delta_a$, then $u|_{S_a}$ and $u|_{C_a}$ are $C^\infty$ functions on $S_a$ and $C_a$ such that, along $H_a$,
\begin{align*}
    u|_{S_a} = u|_{C_a}
    \quad\text{and}\quad
    \nabla_\nu u|_{S_a} = \nabla_\nu u|_{C_a} + \text{\rm loc.\,const}.
\end{align*}
\end{cor}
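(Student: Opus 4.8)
The plan is to deduce Corollary~\ref{cinf} directly from Proposition~\ref{h2d} together with elliptic regularity on $S_a$ and on $C_a$ separately, observing that the transfer conditions along $H_a$ are unchanged. Let me sketch the steps.

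First I would recall that if $u\in D(\Delta_a)$ is an eigenfunction, say $\Delta_a u=\mu u$, then by Proposition~\ref{h2d} we have $u|_{S_a}\in H^2(S_a)$ and $u|_{C_a}\in H^2(C_a)$, and moreover $\Delta u|_{S_a}=\Delta_a u|_{S_a}=\mu\,u|_{S_a}$ and $\Delta u|_{C_a}=\mu\,u|_{C_a}$, where the Laplace operator is applied in the interior of each piece. Thus on the open manifold $\mathring{S_a}$ the function $u|_{S_a}$ satisfies the elliptic equation $\Delta v=\mu v$ in the distributional sense, and by the standard interior elliptic regularity bootstrap (the Laplacian of a hyperbolic surface being a smooth elliptic operator with smooth coefficients), $u|_{S_a}\in C^\infty(\mathring{S_a})$; applying the same argument near the smooth boundary $H_a$ with the Cauchy data supplied below, or simply observing that $S_a$ is a smooth compact manifold with boundary and $u|_{S_a}\in H^2$, one upgrades to $u|_{S_a}\in C^\infty(S_a)$ up to the boundary. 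The identical argument on $C_a$ gives $u|_{C_a}\in C^\infty(C_a)$; here one should note that although $C_a$ is noncompact, regularity is a local statement, and near the boundary horocycle $H_a$ it is again interior-plus-boundary regularity for an elliptic equation on a manifold with smooth boundary. (One may also exploit the cusp structure: in the coordinates $z=x+iy$ the equation becomes an ODE system in $y$ for each Fourier mode, giving explicit smooth—indeed real-analytic—solutions.)

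Having established smoothness of each restriction, the transfer conditions $u|_{S_a}=u|_{C_a}$ and $\nabla_\nu u|_{S_a}=\nabla_\nu u|_{C_a}+\text{loc.\,const}$ along $H_a$ are inherited verbatim from the characterization of $D(\Delta_a)$ in Proposition~\ref{h2d}: they hold for every element of the domain, a fortiori for an eigenfunction. The only subtlety is that in Proposition~\ref{h2d} these identities are stated as equalities of traces in $H^{1/2}(H_a)$ (resp.\ of the normal derivatives in the appropriate trace space); now that $u|_{S_a}$ and $u|_{C_a}$ are smooth up to $H_a$, the traces agree with the honest pointwise restrictions, so the identities become pointwise identities of smooth functions on $H_a$, which is what the corollary asserts.

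The main obstacle, such as it is, is ensuring smoothness up to the boundary $H_a$ rather than merely in the interiors, since the two pieces are glued along $H_a$ only through the weak transfer condition and not through the strong condition $\sigma$ of the preceding remark. I would handle this by noting that within each piece the relevant boundary-value problem is elliptic with smooth ($C^\infty$) data: on $S_a$ one has an $H^2$ solution of $\Delta v=\mu v$ whose Cauchy data along $H_a$ lies in $H^{3/2}\times H^{1/2}$, and bootstrapping the boundary regularity (either via the Dirichlet problem using that the trace of $u|_{S_a}$ is smooth, or directly via local elliptic estimates for the pair) yields $C^\infty(S_a)$; symmetrically for $C_a$. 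Once one sees that the two restrictions are separately smooth solutions of an eigenvalue equation on their respective closed pieces, nothing further is needed, and the transfer relations carry over immediately. This is entirely routine, which is presumably why the corollary is stated without a separate proof, but the plan above makes the dependence on Proposition~\ref{h2d} and interior/boundary elliptic regularity explicit.
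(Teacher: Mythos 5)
Your plan is the paper's plan: Proposition~\ref{h2d} supplies $H^2$ regularity of the two restrictions and the equations $\Delta u|_{S_a}=\mu\,u|_{S_a}$, $\Delta u|_{C_a}=\mu\,u|_{C_a}$; elliptic regularity then gives $H^k$ for all $k$ up to $H_a$; Sobolev embedding gives $C^\infty$; and the transfer conditions along $H_a$ are inherited verbatim from Proposition~\ref{h2d} and become pointwise once smoothness is known. (The paper's proof is two sentences citing exactly this chain, with Taylor, Proposition~4.4.3, for the Sobolev embedding.) The one place where your write-up is imprecise is the boundary bootstrap on each piece in isolation: you cannot close the Dirichlet bootstrap on $S_a$ ``using that the trace of $u|_{S_a}$ is smooth,'' because that smoothness is precisely what is to be proved, and the a priori information $u|_{S_a}\in H^m(S_a)$ only yields a trace in $H^{m-1/2}(H_a)$, whereas passing to $H^{m+1}$ via the Dirichlet problem would require the boundary datum in $H^{m+1/2}$. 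The fixes are exactly the two you allude to. One can treat the pair $(u|_{S_a},u|_{C_a})$ as a solution of an elliptic transmission problem across $H_a$ whose transmission data (continuity of $u$ and a locally constant jump of $\nabla_\nu u$) is $C^\infty$ and hence bootstrap-stable; transmission elliptic regularity then gives $H^k$ on both sides for all $k$. Alternatively, and most in the spirit of the cusp geometry, decompose into Fourier modes in a collar of $H_a$: since the distributional defect $\varphi_u T_a$ lives only in the zeroth mode, every nonzero mode of $u$ satisfies the ODE~\eqref{dfl} on both sides of $y=a_i$ with matching $C^1$ data and hence extends analytically across $H_a$, while the zeroth mode of $u|_{S_a}$ is an explicit ODE solution on $[b_i,a_i]$ and that of $u|_{C_a}$ vanishes. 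With either of these in place of the Dirichlet bootstrap, your argument is complete and agrees with the paper.
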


\begin{proof}
Elliptic regularity implies that $u|_{S_a}\in H^k(S_a)$ and $u|_{C_a}\in H^k(C_a)$ for all $k$.
The assertion now follows from, e.g., \cite[Proposition 4.4.3]{T1}.
\end{proof} 

Obviously, the assertion of \cref{cinf} extends to finite linear combinations of eigenfunctions of $\Delta_a$.

\section{Spectral theory of the Laplacian}\label{speclap}
For convenience, we collect some facts on the spectral theory the Laplacian of $S$.
We assume throughout this section that $S=\Gamma\backslash\H$ is of finite area $|S|$ with $n\ge1$ cusps.
We also assume that $S$ is oriented since we are not aware of references covering the non-orientable case.

\subsection{Spectral theory on the standard hyperbolic cusp}
In the upper half space model $\H=\{y>0\}$ of the hyperbolic plane, the \emph{standard hyperbolic cusp} $C_0$ is the quotient of $\H$ by the infinite cyclic group of isometries generated by
\begin{align}\label{c0}
    \gamma_0 = \begin{pmatrix} 1 & 1 \\ 0 & 1\end{pmatrix},
\end{align}
where, here and below, we write $z=x+iy$ for elements of $\H$.
For any $y>0$, the circle $H_y=\{x+iy\mid x\in\Z\backslash\R\}$ in $C_0$ will be called the \emph{horocycle} at height $y$.
The length of $H_y$ is $1/y$.
Hence the functions
\begin{align}
    y^{1/2}e^{2\pi i kx}, \quad\text{for $k\in\Z$}, 
\end{align}
form a Hilbert basis of $L^2(H_y)$, for all $y>0$.
They define the \emph{Fourier coefficients} $f_k(y)$ of smooth functions $f$ on $C_0$, where
\begin{align}\label{fc}
    f_k(y) = \int_0^1 f(x+iy)y^{1/2}e^{2\pi i kx} dx,
\end{align}
which are smooth in $y>0$.
The Laplacian on $\H$ and $C_0$ is given by
\begin{align}\label{hl}
    \Delta &= -y^2(\partial_x^2+\partial_y^2)
\end{align}
If we write a smooth function $f$ in terms of its Fourier coefficients, then
\begin{align}\label{df}
\begin{split}
    \Delta f &= \Delta \sum\nolimits_{k\in\Z} f_ky^{1/2}e^{2\pi i kx} \\
     &= \sum\nolimits_{k\in\Z} \big(-y^2f_k'' - yf_k' + (1/4 + 4\pi^2k^2y^2)f_k\big)y^{1/2}e^{2\pi i kx},
\end{split}
\end{align}
which gives the Fourier decomposition of $\Delta f$.
Hence $f$ is a $\lambda$-eigenfunc\-tion of $\Delta$, for $\lambda\in\C$, if and only if the functions $f_k$ satisfy
\begin{align}\label{dfl}
    y^2f_k'' + yf_k' - (1/4 + 4\pi^2k^2y^2 - \lambda) = 0,
\end{align}
which is a family of linear ODEs of second order.

\begin{cor}\label{fha}
If $f$ is a $\lambda$-eigenfunction of $\Delta$, then $f$ is determined by $f|_{H_a}$ and $\partial_yf|_{H_a}$, for any given $a>0$.
\end{cor}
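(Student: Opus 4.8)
The plan is to reduce the statement to the uniqueness theorem for linear second-order ordinary differential equations, using the Fourier decomposition along the horocycles $H_y$. By linearity it suffices to prove that a $\lambda$-eigenfunction $h$ on $C_0$ with $h|_{H_a}=0$ and $\partial_y h|_{H_a}=0$ vanishes identically: applying this to the difference of two $\lambda$-eigenfunctions sharing the same Cauchy data along $H_a$ then yields the corollary. I first note that $h$ is smooth by elliptic regularity, so that the computations leading to \eqref{df} and \eqref{dfl} apply to it.

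Next I would expand $h(x+iy)=\sum_{k\in\Z} h_k(y)\,y^{1/2}e^{2\pi ikx}$ as in \eqref{df}. Since $\{y^{1/2}e^{2\pi ikx}\}_{k\in\Z}$ is a Hilbert basis of $L^2(H_y)$ for every $y>0$, the hypothesis $h|_{H_a}=0$ forces $h_k(a)=0$ for all $k$. Differentiating the series in $y$ term by term — which is legitimate for the smooth function $h$ — gives $\partial_y h(x+iy)=\sum_{k\in\Z}\big(h_k'(y)y^{1/2}+\tfrac12 h_k(y)y^{-1/2}\big)e^{2\pi ikx}$; evaluating at $y=a$ and using $h_k(a)=0$, the hypothesis $\partial_y h|_{H_a}=0$ forces $h_k'(a)=0$ for all $k$ as well.

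Finally, for each fixed $k$ the function $h_k$ solves the linear ODE \eqref{dfl} on $(0,\infty)$, whose leading coefficient $y^2$ is nowhere zero there; hence the initial value problem for \eqref{dfl} at $y=a$ has a unique solution, and from $h_k(a)=h_k'(a)=0$ I conclude $h_k\equiv0$ on $(0,\infty)$. Since this holds for every $k$, we get $h\equiv0$, as desired.

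I do not expect a genuine obstacle here: the entire content is the observation that \eqref{dfl} is a regular linear ODE and that the boundary data of $f$ along $H_a$ translate precisely into Cauchy data for each Fourier coefficient $f_k$ at $y=a$. The only steps deserving a word of justification are the smoothness of $f$ (elliptic regularity) and the term-by-term differentiation of the Fourier series, both of which are routine for eigenfunctions.
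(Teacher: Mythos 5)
Your proposal is correct and takes essentially the same approach as the paper: the paper presents \cref{fha} as an immediate consequence of the observation that \eqref{dfl} is a family of regular second-order linear ODEs in $y$, which is precisely the ODE-uniqueness argument you spell out for each Fourier coefficient $f_k$. The only thing you add is the explicit bookkeeping showing that the Cauchy data $f|_{H_a}$, $\partial_y f|_{H_a}$ translate into the initial conditions $f_k(a)$, $f_k'(a)$, which the paper leaves implicit.
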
 

We write $\lambda=s(1-s)\in\C$ and note in passing that $s(1-s)$ defines a double cover of the complex $\lambda$-plane with branch point at $s=1/2$.
For $k=0$, there are the following two fundamental solutions of \eqref{dfl},
\begin{align}\label{f0}
    f_0(y) = y^{s-1/2} \text{ and } y^{1/2-s}
    \quad\text{respectively}\quad
    1 \text{ and } \ln y
\end{align}
for $\lambda\ne1/4$ (that is, $s\ne1/2$) respectively $\lambda=1/4$ (that is, $s=1/2$). 
In particular, if $f$ only depends on $y$, then there are the following two fundamental solutions of $\Delta f=\lambda f$,
\begin{align}\label{fy}
    f(y) = y^{s} \text{ and } y^{1-s}
    \quad\text{respectively}\quad
    y^{1/2} \text{ and } y^{1/2}\ln y
\end{align}
for $\lambda\ne1/4$ (that is, $s\ne1/2$) respectively $\lambda=1/4$ (that is, $s=1/2$).
For $k\ne0$, we have the following two fundamental solutions of \eqref{dfl},
\begin{align}\label{fk}
    f_k(y) = 2K_{s-1/2}(2\pi ky) \quad\text{and}\quad f_k(y) = 2\pi I_{s-1/2}(2\pi ky),
\end{align}
where $I_\nu$ and $K_\nu$ denote the Bessel functions associated to the ODE \[y^2f''+yf'-(y^2+\nu^2)f=0;\] see \cite[Appendix B.4]{Iw}.

\subsection{Eisenstein series}\label{seceis}
We follow the exposition in \cite{Iw}.
Let $S=\Gamma\backslash\H$ be a hyperbolic surface of finite area with $n\ge1$ cusps $C_1,\dots,C_n$.
For each of these, choose an ideal point $\xi_i$ with stabilizer $\Gamma_i\cong\la\gamma_i\ra$ in $\Gamma$
and an isometry $\sigma_i$ of the hyperbolic plane with $\sigma_i\infty=\xi_i$ (in the upper half-space model) such that 
\begin{align}\label{cuspi}
  C_i = \Gamma_i\backslash\sigma_i\{y \geq b_i\}, \quad\text{where}\quad
  \gamma_i = \sigma_i\begin{pmatrix} 1 & 1 \\ 0 & 1\end{pmatrix}\sigma_i^{-1}.
\end{align}
Note that, for $\xi_i$ given, $\sigma_i$ is unique up to a translation $n_t(x+iy)=x+t+iy$ of $\H$.
For a function $\vf$ on $\R^+$, we obtain the \emph{weighted Eisenstein series}
\begin{align}\label{eisi}
    E_i(z,\vf) = \sum\nolimits_{\gamma\in\Gamma_i\backslash\Gamma} \vf(\Im\sigma_i^{-1}\gamma z)
\end{align}
if well defined, and then also as a function on $S$.
For a different choice $\xi_i'=g\xi_i$ of $\xi_i$, where $g\in\Gamma$, the stabilizer $\Gamma_i'$ in $\Gamma$ is $g\Gamma_i g^{-1}$, and we have
\begin{align*}
    \sum\nolimits_{\gamma\in\Gamma_i'\backslash\Gamma}\vf(\Im n_t^{-1}\sigma_i^{-1}g^{-1}\gamma z) = \sum\nolimits_{\gamma\in\Gamma_i\backslash\Gamma}\vf(\Im\sigma_i^{-1}\gamma z) = E_i(z,\vf)
\end{align*}
for the Eisenstein series with weight $\vf$, where $\sigma_i'=g\sigma_in_t$, since $n_t$ does not change imaginary parts.
For $s\in\C$ and $\vf(y)=y^s$, we write
\begin{align}\label{eisi2}
    E_i(z,s) = E_i(z,y^s) = \sum\nolimits_{\gamma\in\Gamma_i\backslash\Gamma} (\Im\sigma_i^{-1}\gamma z)^s.
\end{align}
Define $s(1-s)$-eigenfunctions $V_s$ and $W_s$ by
\begin{align}\label{whi}
\begin{split}
    V_s(z) = 2\pi y^{1/2} I_{2-1/2}(2\pi y) e^{2\pi ix}, \\
    W_s(z) = 2y^{1/2} K_{s-1/2}(2\pi y) e^{2\pi ix},
\end{split}
\end{align}
where $W$ is called the \emph{Whittaker function}; compare with \cite[Equations 1.26, 1.27, 1.36]{Iw}.
Notice the different asymptotics of $V_s$ and $W_s$,
\begin{align}\label{whiy}
\begin{split}
    V_s(z) \sim e^{2\pi ix}e^{2\pi y} \quad\text{and}\quad
    W_s(z) \sim e^{2\pi ix}e^{-2\pi y} \quad\text{as $y\to\infty$};
\end{split}
\end{align}
see \cite[Equations 1.37, 1.38]{Iw}.
By \cite[Equation 6.18]{Iw}, we have
\begin{align}\label{I6.18}
    E_i(\sigma_jz,s) = \delta_{ij}y^s + \vf_{ij}(s)y^{1-s} + \sum\nolimits_{k\ne0}\vf_{ij}(k,s)W_s(kz),
\end{align}
where the coefficients $\vf_{ij}(s)$ and $\vf_{ij}(k,s)$ are meromorphic functions of $s$.
The matrix $\Phi(s)=(\vf_{ij}(s))$ of coefficients is called the \emph{scattering matrix}.

\begin{rem}\label{Ipage91}
For $s\ne 1/2$, the term of order $0$, that is, the sum of the first two terms on the right in \eqref{I6.18}, does not vanish identically, hence neither $E_i(.,s)$.
On the other hand, $E_i(.,1/2)$ vanishes identically if and only if $\vf_{ii}(1/2)=-1$.
\end{rem}

The following are Theorems 6.6, 6.9, 6.10 and 6.11 in \cite{Iw} together with \cite[Th\'eor\`eme 4]{CV}.

\begin{thm}\label{I6.6}
The \emph{scattering matrix} $\Phi(s)=(\vf_{ij}(s))$ is Hermitian for real $s$ and unitary for $\Re s=1/2$.
\end{thm}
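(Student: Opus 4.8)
The plan is to deduce the theorem from three facts about the scattering matrix, only the third of which is substantial.

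First I would record the \emph{functional equation} for Eisenstein series: writing $\mathbf{E}(z,s)=(E_1(z,s),\dots,E_n(z,s))^{t}$, one has $\mathbf{E}(z,s)=\Phi(s)\,\mathbf{E}(z,1-s)$, which is part of the analytic continuation package for Eisenstein series (see \cite{Iw}). Iterating gives $\mathbf{E}(z,s)=\Phi(s)\Phi(1-s)\mathbf{E}(z,s)$; since the order-zero terms of $E_i(\sigma_j\cdot,s)$ are $\delta_{ij}y^{s}+\vf_{ij}(s)y^{1-s}$, the functions $E_1(\cdot,s),\dots,E_n(\cdot,s)$ are linearly independent for generic $s$, and hence, as an identity of meromorphic matrix functions,
\begin{equation*}
\Phi(s)\,\Phi(1-s)=I .
\end{equation*}

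Next, a reality statement. For $\Re s>1$ the series \eqref{eisi2} converges absolutely, and since each $\Im\sigma_i^{-1}\gamma z$ is a positive real number, $\overline{E_i(z,s)}=\sum_{\gamma}(\Im\sigma_i^{-1}\gamma z)^{\bar s}=E_i(z,\bar s)$. Conjugating \eqref{I6.18} and comparing the order-zero parts (the terms $y^{\bar s}$ and $y^{1-\bar s}$, which for generic $s$ are not affected by the exponentially decaying Whittaker terms) yields $\overline{\vf_{ij}(s)}=\vf_{ij}(\bar s)$ for $\Re s>1$, hence, by meromorphic continuation, $\overline{\Phi(s)}=\Phi(\bar s)$ for all $s$, where the conjugation on a matrix is entrywise.

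The heart of the matter is the \emph{symmetry} $\Phi(s)^{t}=\Phi(s)$, i.e.\ $\vf_{ij}(s)=\vf_{ji}(s)$ identically in $s$. I would obtain it from the Maass–Selberg relations: truncate all cusps at a common large height $T$, and apply Green's formula on the resulting compact surface to the pair $\Lambda^{T}E_i(\cdot,s)$, $\Lambda^{T}E_j(\cdot,\bar s')$; the boundary integrals over the horocycles can be evaluated from the order-zero term in \eqref{I6.18}, producing an explicit expression in $\delta_{ij}$, $\vf_{ij}(s)$, $\vf_{ji}(s')$, $\vf_{ij}(1-s)$, $\vf_{ji}(1-s')$ and powers of $T$. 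Using the Hermitian symmetry $\langle f,g\rangle=\overline{\langle g,f\rangle}$ of the $L^2$ inner product together with the reality statement just proved, and then specializing $s'$, forces $\vf_{ij}(s)-\vf_{ji}(s)=0$. (Alternatively one can invoke the symmetry $G_s(z,w)=G_s(w,z)$ of the automorphic resolvent kernel and read off the continuous-spectrum part of its spectral expansion.) This bookkeeping — keeping track of exactly which boundary terms survive in the relevant limit — is the step I expect to be the main obstacle; everything else is formal.

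Finally I would combine the three facts. For real $s$, reality gives $\overline{\Phi(s)}=\Phi(\bar s)=\Phi(s)$, so with symmetry $\Phi(s)^{*}=\overline{\Phi(s)}^{t}=\Phi(s)^{t}=\Phi(s)$, i.e.\ $\Phi(s)$ is Hermitian (in fact real symmetric). For $\Re s=1/2$ write $s=1/2+it$, so that $1-s=\bar s$; then $\Phi(1-s)=\Phi(\bar s)=\overline{\Phi(s)}$ and, using symmetry again, $\Phi(s)^{*}=\overline{\Phi(s)}^{t}=\Phi(1-s)^{t}=\Phi(1-s)$. The functional-equation identity $\Phi(s)\Phi(1-s)=I$ (and its mirror $\Phi(1-s)\Phi(s)=I$) then reads $\Phi(s)\Phi(s)^{*}=\Phi(s)^{*}\Phi(s)=I$, so $\Phi(s)$ is unitary on the line $\Re s=1/2$.
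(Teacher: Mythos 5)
The paper does not actually prove this statement: immediately before the theorem it says ``The following are Theorems 6.6, 6.9, 6.10 and 6.11 in \cite{Iw} \dots'', i.e.\ \cref{I6.6} is recalled verbatim from Iwaniec's book and no proof is given in the paper. Your argument is essentially the standard one Iwaniec uses, and it is correct: the functional equation $\Phi(s)\Phi(1-s)=I$, the reality relation $\overline{\Phi(s)}=\Phi(\bar s)$ obtained from the convergent series for $\Re s>1$ and meromorphic continuation, and the symmetry $\Phi(s)^{t}=\Phi(s)$ coming from the Maass--Selberg inner-product formula for truncated Eisenstein series, combine exactly as you describe to give Hermitian symmetry on the real axis and unitarity on the critical line $\Re s=1/2$ (where $1-s=\bar s$). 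So you have, in effect, reconstructed the proof from the cited source rather than found an alternative to anything in the paper; the only thing to be aware of is that the Maass--Selberg bookkeeping you flag as ``the main obstacle'' is indeed where the real work lies, and a complete writeup would have to carry it out rather than gesture at it.
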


\begin{thm}\label{I6.9}
The coefficients $\vf_{ij}(s)$ are holomorphic in $\Re s\ge1/2$, except for a finite number of simple poles in the interval $(1/2,1]$.
If $s$ is such a pole, then it is also a pole of $\vf_{ii}(s)$ and $\res\vf_{ii}(s)>0$.
\end{thm}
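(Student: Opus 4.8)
The plan is to deduce \cref{I6.9} from the meromorphic continuation of the Eisenstein series together with the Maass--Selberg relations. First I would recall (cf.\ \cite[Chapter 6]{Iw}) that each $E_i(z,s)$ extends to a meromorphic family of automorphic $s(1-s)$-eigenfunctions on all of $\C$, so that the coefficients $\vf_{ij}(s)$ and $\vf_{ij}(k,s)$ in \eqref{I6.18} are meromorphic in $s$, and that the defining series \eqref{eisi2} converges locally uniformly for $\Re s>1$, so that $E_i(\cdot,s)$ and $\Phi(s)$ are holomorphic there. Consequently all poles of the $\vf_{ij}$ in the half-plane $\Re s\ge 1/2$ lie in the strip $1/2\le\Re s\le 1$, and it remains to push them off the critical line, to show that the ones that survive are real and simple, that there are only finitely many, and to compute the residues of the diagonal entries.

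To exclude poles on $\Re s=1/2$ I would argue directly from \cref{I6.6}. The $\vf_{ij}$ are meromorphic near the line $\Re s=1/2$, and by unitarity of $\Phi(s)$ for $\Re s=1/2$ we have $|\vf_{ij}(s)|\le 1$ at every point of that line at which $\Phi$ is holomorphic. If $s_0$ with $\Re s_0=1/2$ were a pole of some $\vf_{ij}$, then $|\vf_{ij}(s)|\to\infty$ as $s\to s_0$; since poles are isolated, we may let $s\to s_0$ along the line, where $|\vf_{ij}(s)|\le1$ except at $s_0$ itself, a contradiction. Hence every $\vf_{ij}$ is holomorphic on $\Re s=1/2$, and all poles in $\Re s\ge 1/2$ lie in the open strip $1/2<\Re s\le 1$.

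Now fix a pole $s_0$ of some $\vf_{i_0 j_0}$ with $1/2<\Re s_0\le 1$. The order-zero Fourier coefficient $\delta_{i_0 j_0}y^{s}+\vf_{i_0 j_0}(s)y^{1-s}$ of $E_{i_0}(\sigma_{j_0}z,s)$ blows up at $s_0$, so $E_{i_0}(\cdot,s)$ has a pole there; expand $E_{i_0}(\cdot,s)=\sum_{k\ge -m}a_k(s-s_0)^k$ with $a_k\in C^\infty(S)$ and $m\ge1$ the pole order, $a_{-m}\ne0$. Matching powers in $\Delta E_{i_0}(\cdot,s)=s(1-s)E_{i_0}(\cdot,s)$ with $\lambda_0:=s_0(1-s_0)$ gives $\Delta a_{-m}=\lambda_0 a_{-m}$ and $\Delta a_{-m+1}=\lambda_0 a_{-m+1}+(1-2s_0)a_{-m}$. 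Since $\Re s_0>1/2$, the order-zero Fourier coefficient of each $a_k$ with $k<0$ is a polynomial in $\log y$ times $y^{1-s_0}$, hence square-integrable near every cusp; thus $a_k\in L^2(S)$ for $k<0$. In particular $a_{-m}$ is an $L^2$ eigenfunction of the self-adjoint operator $\Delta$, so $\lambda_0\in[0,\infty)$; as $\Re s_0\ne1/2$ this forces $s_0\in\R$, hence $s_0\in(1/2,1]$ and $\lambda_0\in[0,1/4)$. If $m\ge2$, pairing the second relation with $a_{-m}$ and using self-adjointness yields $(1-2s_0)\|a_{-m}\|^2=0$, impossible since $s_0\ne1/2$ and $a_{-m}\ne0$; hence every pole in $\Re s\ge1/2$ is simple and lies in $(1/2,1]$. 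Distinct such poles give distinct eigenvalues $\lambda_0\in[0,1/4)$ of $\Delta$ on $L^2(S)$, and since $\Delta$ has only finitely many eigenvalues below $1/4$ (e.g.\ by \cref{otro}, or classically for finite-area hyperbolic surfaces), there are only finitely many poles.

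Finally, for the residues of the diagonal entries, fix a pole $s_0\in(1/2,1]$ and set $u_i:=\res_{s=s_0}E_i(\cdot,s)\in L^2(S)$; simplicity of the pole gives that the order-zero Fourier coefficient of $u_i$ at cusp $j$ equals $r_{ij}y^{1-s_0}$ with $r_{ij}=\res_{s=s_0}\vf_{ij}(s)$, the term $\delta_{ij}y^s$ being regular. Applying the Maass--Selberg relation to the truncated Eisenstein series $\Lambda^Y E_i(\cdot,s)$ (subtract the order-zero Fourier coefficient above height $Y$ in each cusp) and taking its residue at $s_0$ yields the identity $\langle u_i,u_j\rangle_{L^2(S)}=r_{ij}=\res_{s=s_0}\vf_{ij}(s)$; in particular $\res_{s=s_0}\vf_{ii}(s)=\|u_i\|^2\ge0$. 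Taking $i=i_0$ as above, $E_{i_0}(\cdot,s)$ has a pole at $s_0$, so $u_{i_0}\ne0$, whence $\res_{s=s_0}\vf_{i_0 i_0}(s)=\|u_{i_0}\|^2>0$; thus $s_0$ is also a pole of $\vf_{i_0 i_0}$ with positive residue, as claimed. The main obstacle is the analytic bookkeeping in the Maass--Selberg relation --- tracking the removable singularities at $s+s'=1$ and along the critical line and carefully extracting the residue identity $\langle u_i,u_j\rangle=\res_{s=s_0}\vf_{ij}(s)$ --- together with the input that the discrete spectrum of $\Delta$ below $1/4$ is finite; once the meromorphic continuation and these two facts are in place, the rest is essentially formal.
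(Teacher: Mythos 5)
The paper does not prove this statement itself: it is quoted from Iwaniec's book (Theorem 6.9 there, together with 6.6, 6.10, 6.11), so there is no ``paper's own proof'' to compare against beyond the citation. Your sketch reproduces the standard argument in that reference --- holomorphy for $\Re s>1$ from absolute convergence, unitarity of $\Phi$ on the critical line to exclude poles there, identification of the leading Laurent coefficient at a pole in the open strip as an $L^2$ $\lambda_0$-eigenfunction to force $s_0\in(1/2,1]$ and simplicity, finiteness from discreteness of the spectrum below $1/4$, and the Maass--Selberg residue identity $\langle u_i,u_j\rangle=\res_{s=s_0}\vf_{ij}(s)$ for positivity of the diagonal residues --- and it is correct, with the one substantive step you leave unverified (the residue form of Maass--Selberg, including the normalising constant) honestly flagged as the technical core.
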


\begin{thm}\label{I6.10}
The map $s\mapsto E_i(.,s)\in C^\infty(S)$ is holomorphic in $\{ s\in \mathbb{C}\mid \Re s\ge1/2 \}$, except for
the poles of $\vf_{ii}(s)$.
The residues \[\res E_i(z,s)=\lim_{s'\to s}(s'-s)E_i(z,s')\] at the poles of $\vf_{ii}(s)$ are non-vanishing square-integrable $\lambda$-eigenfunc\-tions on $S$, where $\lambda=s(1-s)$.
\end{thm}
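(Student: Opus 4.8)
The plan is to take as given the meromorphic continuation of $s\mapsto E_i(\cdot,s)$ from the half-plane $\Re s>1$, where it is defined by the convergent series \eqref{eisi2}, to the half-plane $\Re s\ge1/2$. This is the one deep analytic input. It is furnished either by Fredholm theory applied to the resolvent of $\Delta$ as in \cite[Ch.\,6]{Iw}, or, in the spirit of \cref{subcdv}, by writing $E_i(\cdot,s)=E_i^{\mathrm{cut}}(\cdot,s)-R_a(s(1-s))\big[(\Delta-s(1-s))E_i^{\mathrm{cut}}(\cdot,s)\big]$, where $E_i^{\mathrm{cut}}(\cdot,s)$ is obtained from $E_i(\cdot,s)$ by truncating the leading term $y^s$ in the cusps (so that $(\Delta-s(1-s))E_i^{\mathrm{cut}}(\cdot,s)$ is compactly supported and holomorphic in $s$) and $R_a(\zeta)$ is the resolvent of the pseudo-Laplacian $\Delta_a$, which, having compact resolvent \cite[p.\,206\,ff]{LP}, is meromorphic in $\zeta$ with poles only at the eigenvalues of $\Delta_a$; compare \cite[Th\'eor\`eme 4]{CV}. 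Granting this and \cref{I6.9} on the scattering entries, both assertions of the theorem reduce to bookkeeping with the Fourier expansion \eqref{I6.18} together with local elliptic regularity.

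First I would note that holomorphy of $s\mapsto E_i(\cdot,s)$ as a $C^\infty(S)$-valued map (for the Fr\'echet topology) follows from holomorphy as a distribution-valued, equivalently $L^2_{\mathrm{loc}}$-valued, map: on any relatively compact open subset of $S$ the family solves the holomorphic family of elliptic equations $(\Delta-s(1-s))E_i(\cdot,s)=0$, and interior elliptic estimates turn a meromorphic $L^2_{\mathrm{loc}}$-dependence on $s$ into a meromorphic $C^\infty$-dependence with the same polar set. It thus suffices to locate the poles, and for this I would read off \eqref{I6.18}: in the cusp $C_j$ it expresses $E_i(\sigma_j z,s)$ as $\delta_{ij}y^s+\varphi_{ij}(s)y^{1-s}+\sum_{k\ne0}\varphi_{ij}(k,s)W_s(kz)$, where $y^s$, $y^{1-s}$ and the Whittaker functions $W_s(kz)$ of \eqref{whi} are entire in $s$ while, by the exponential decay \eqref{whiy} which is uniform for $s$ in compacta, the series converges locally uniformly in $(z,s)$. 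Hence in $\Re s\ge1/2$ the poles of $E_i(\cdot,s)$ can only come from the coefficients $\varphi_{ij}(s)$ and $\varphi_{ij}(k,s)$; by the resolvent construction these coincide with the poles of $E_i(\cdot,s)$ itself and with those of the scattering matrix $\Phi(s)$, and by \cref{I6.9} the latter form a finite subset of $(1/2,1]$ consisting of simple poles, each of which is a pole of $\varphi_{ii}(s)$. This gives the holomorphy statement.

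Next I would compute the residues. Fix a pole $s_0\in(1/2,1]$ of $\varphi_{ii}(s)$, simple by \cref{I6.9}, and set $u=\res E_i(\cdot,s)=\lim_{s\to s_0}(s-s_0)E_i(\cdot,s)\in C^\infty(S)$. Taking residues at $s_0$ in the eigenvalue equation and using that $s\mapsto s(1-s)$ is entire yields $\Delta u=\lambda_0 u$ with $\lambda_0=s_0(1-s_0)\in[0,1/4)$, so $u$ is a $\lambda_0$-eigenfunction. Taking residues at $s_0$ in \eqref{I6.18}, the holomorphic term $\delta_{ij}y^s$ drops out, the zeroth Fourier coefficient of $u$ in the cusp $C_j$ equals $(\res\varphi_{ij}(s_0))\,y^{1-s_0}$, and the $k$-th coefficient ($k\ne0$) equals $(\res\varphi_{ij}(k,s_0))\,W_{s_0}(kz)$, which decays exponentially. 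Since $s_0>1/2$ the function $y^{1-s_0}$ is square-integrable on a cusp, whence $u\in L^2(S)$. Finally $\res\varphi_{ii}(s_0)>0$ by \cref{I6.9}, so the zeroth Fourier coefficient of $u$ in the cusp $C_i$ is a nonzero multiple of $y^{1-s_0}$ and therefore $u\not\equiv0$. Thus the residues are non-vanishing square-integrable $\lambda_0$-eigenfunctions on $S$, as asserted.

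The only genuinely hard point is the one black-boxed at the outset: the analytic continuation of $E_i(\cdot,s)$ past $\Re s=1$ into $\Re s\ge1/2$ and, with it, the identification of its polar set there with that of $\Phi(s)$. This is exactly what \cite[Ch.\,6]{Iw} provides via Fredholm theory and what \cite[Th\'eor\`eme 4]{CV} recovers through the compact-resolvent pseudo-Laplacian $\Delta_a$; once either is in place, the steps above are routine. On the line $\Re s=1/2$ one can moreover shortcut part of the work by combining the functional equation for the vector of Eisenstein series with the unitarity of $\Phi(s)$ from \cref{I6.6}.
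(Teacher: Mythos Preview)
Your proposal is correct and mirrors the paper's treatment: the paper cites the theorem from \cite[Theorems 6.10, 6.11]{Iw} together with \cite[Th\'eor\`eme 4]{CV} for the meromorphic continuation and location of poles, and then offers exactly the same sketch you give for the residue---taking residues in \eqref{I6.18} kills the holomorphic $\delta_{ij}y^s$ term, leaves $\res\vf_{ij}(s)y^{1-s}$ as the zeroth Fourier coefficient, whence square-integrability follows from $\Re s>1/2$ and non-vanishing from $\res\vf_{ii}(s)>0$. Your write-up is simply a more detailed version of the same argument, with the same black box.
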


Note that taking the residue kills the first term of the Fourier coefficient of order $0$ in \eqref{I6.18},
whereas the second becomes $\res\vf_{ij}(s)y^{1-s}$.
Since the poles have real part $>1/2$, the square-integrability of the residue $\res E_i(.,s)$ at poles $s$ follows.
Non-vanishing of $\res E_i(z,s)$ holds since $\res\vf_{ii}(s)>0$ for them.

\subsection{Spectral decomposition}\label{sublap}

Eigenvalues and eigenfunctions of the kind in \cref{I6.10} are called \emph{residual}.
As an example, $s=1$ is a pole of each Eisenstein series $E_i(z,s)$, and the residue $\res E_i(z,1)=1/|S|$, by \cite[Proposition 6.13]{Iw}.
Note that the residue at $s=1$ is constant and does not depend on $i$, whereas the multiplicity of the eigenvalue $\lambda=0$ is one.
In \cref{mulres} below, we characterize the multiplicity of residual eigenvalues in terms of the residue of the scattering matrix. 

If $\vf$ is smooth and with compact support, then $E_i(z,\vf)$ is a well defined bounded smooth function on $S$, called an \emph{incomplete Eisenstein series}.
The vector space generated by incomplete Eisenstein series is denoted by $\mathcal E'$.

Say that a smooth function $f$ on $S$ is \emph{cuspidal} if its Fourier coefficients of order $0$ along the cusps of $S$ vanish.
More precisely, for any $1\le i\le n$, the Fourier coefficient of order $0$ of $\tilde f\circ\sigma_i$ vanishes (completely, not only for $y>b_i$),
where $\tilde f$ denotes the lift of $f$ to the upper half-plane.
The vector space of bounded cuspidal functions is denoted by $\mathcal C'$.
It is orthogonal to $\mathcal E'$ in $L^2(S)$.
The corresponding closures ${\mathcal E}$ and ${\mathcal C}$ in $L^2(S)$ decompose $L^2(S)$,
\begin{align}\label{coe}
    L^2(S) = \mathcal C \oplus \mathcal E,
\end{align}
see \cite[(3.16)]{Iw}.
This decomposition is $L^2$-orthogonal and $\Delta$-invariant and decomposes the spectrum as follows (\cite[Theorems 4.7 and 7.3]{Iw}).

\begin{thm}\label{specs}
The spectrum of $\Delta$ on the \emph{cuspidal part} $\mathcal C$ is discrete.
The spectrum of $\Delta$ on $\mathcal E$ admits an $L^2$-orthogonal and $\Delta$-invariant decomposition
\begin{align*}
    \mathcal E = \mathcal R \oplus \mathcal E_1 \oplus\cdots\oplus \mathcal E_n,
\end{align*}
where the spectrum of $\Delta$ on
\begin{enumerate}
    \item \emph{the residual part }$\mathcal R$ consists of the finitely many residual eigenvalues and is contained in $[0,1/4)$;
    \item each of the $\mathcal E_i$ is absolutely continuous, covering $[1/4,\infty)$ with multiplicity one.
\end{enumerate}
\end{thm}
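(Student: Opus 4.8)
The plan is to derive the theorem from the spectral theory of Eisenstein series assembled above, essentially following \cite[Chapters~4 and~7]{Iw}; what follows is a sketch, with the analytically heavy steps deferred to that source. \textbf{Cuspidal part.} I would first show that $\Delta$ restricted to $\mathcal C$ has compact resolvent, which forces its spectrum to be discrete. The key mechanism is a Poincar\'e (Wirtinger) inequality on the horocycles: since a function $f\in\mathcal C\cap H^1(S)$ has vanishing zeroth Fourier coefficient along each cusp $C_i$ and the horocycle $H_y$ in $C_i$ has hyperbolic length $1/y$, one obtains
\begin{align*}
    \|f\|_{L^2(C_{i,T})}^2 \;\le\; \frac{1}{4\pi^2T^2}\,\|\nabla f\|_{L^2(C_{i,T})}^2 \qquad (T\ge b_i).
\end{align*}
Hence a bounded sequence in $\mathcal C\cap H^1(S)$ has uniformly small $L^2$-tails in the cusps; combined with Rellich's theorem on the compact core of $S$, this gives compactness of the inclusion $\mathcal C\cap H^1(S)\hookrightarrow\mathcal C$, so the spectrum of $\Delta$ on $\mathcal C$ is discrete. (Structurally, the same decay is visible in \eqref{dfl}, \eqref{fk} and \eqref{whiy}: the nonzero Fourier modes of a cuspidal eigenfunction decay like $e^{-2\pi|k|y}$.)

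\textbf{Decomposition of $\mathcal E$.} On $\mathcal E$ I would use the Eisenstein transform, which sends $f$ to the collection of pairings of $f$ against $E_i(\cdot,\tfrac12+it)$ for $1\le i\le n$ and $t\in\R$. Using the meromorphic continuation and the functional equation of the Eisenstein series together with the unitarity of the scattering matrix on $\{\Re s=\tfrac12\}$ (\cref{I6.6}), one shows that, after the substitution $\lambda=\tfrac14+t^2$, this transform is an isometry from the orthogonal complement of the residual eigenfunctions in $\mathcal E$ onto $\bigoplus_{i=1}^n L^2([1/4,\infty))$, intertwining $\Delta$ with multiplication by $\lambda$. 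This produces the summands $\mathcal E_1,\dots,\mathcal E_n$, each absolutely continuous and covering $[1/4,\infty)$ with multiplicity one, one coordinate per cusp. The leftover subspace $\mathcal R$ is, by construction, spanned by the residues $\res E_i(\cdot,s)$ at the poles of the $\vf_{ii}(s)$; by \cref{I6.10} these are finitely many nonzero square-integrable $\lambda$-eigenfunctions, so $\mathcal R$ is finite-dimensional and $\Delta$-invariant, and the full decomposition $\mathcal E=\mathcal R\oplus\mathcal E_1\oplus\cdots\oplus\mathcal E_n$ is $L^2$-orthogonal by the Maass--Selberg relations among Eisenstein series and their residues.

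\textbf{Residual eigenvalues and the main obstacle.} Since by \cref{I6.9} every pole of the $\vf_{ii}(s)$ with $\Re s\ge\tfrac12$ lies in $(\tfrac12,1]$, the corresponding eigenvalue $\lambda=s(1-s)$ lies in $[0,1/4)$ (the pole at $s=1$ accounting for the eigenvalue $0$), which is claim~(1); claim~(2) is built into the construction of the Eisenstein transform. The principal difficulty is the completeness statement behind the decomposition of $\mathcal E$, namely that the continuous Eisenstein integrals together with the residual eigenfunctions exhaust $\mathcal E$. This is proved by expressing an incomplete Eisenstein series $E_i(z,\vf)$ as a contour integral of $E_i(z,s)$ over a vertical line $\Re s=\sigma>1$, then shifting the contour to $\Re s=\tfrac12$ and collecting the residues at the poles in $(\tfrac12,1]$ crossed along the way; making this rigorous requires the full meromorphic continuation of the Eisenstein series and control of their growth in vertical strips, which is precisely the delicate analytic input we take from \cite{Iw}.
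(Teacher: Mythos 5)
Your sketch is correct and follows the standard route (discreteness of the cuspidal spectrum via the Wirtinger inequality on horocycles combined with Rellich on a compact core; continuous spectrum via the Eisenstein transform and a contour shift collecting residues). The paper gives no independent proof of this statement, simply citing Theorems 4.7 and 7.3 of Iwaniec, which is precisely the argument you outline.
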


The spectrum of $\Delta$ on $\mathcal E_i$ is spanned by the $E_i(z,1/2+ir),r\in\R$, although the latter do not belong to $L^2(S)$ themselves; compare with \cite[Theorem 7.3]{Iw} for details.

We denote the eigenvalues of $\Delta$ on $\mathcal C$, counted with multiplicity, by
\begin{align*}
    0 < \lambda_1^{\cus} \le \lambda_2^{\cus} \le \lambda_3^{\cus} \le \dots
\end{align*}
We call them the \emph{cuspidal eigenvalues} with their \emph{cuspidal eigenfunctions}.
The point spectrum of $\Delta$ in $\mathcal E$ consists of the finite number of residual eigenvalues
\begin{align*}
    0 = \lambda_0^{\res} < \lambda_1^{\res} \le \lambda_2^{\res} \le \dots \le \lambda_k^{\res} < 1/4,
\end{align*}
where we note that $0$ is the residual eigenvalue associated to the pole $s=1$ \cite[Theorem 6.13]{Iw}.

\begin{rem}\label{specrem}
The spectrum of $\Delta$ in $[0,1/4)$ is discrete.
It consists of finitely many cuspidal and residual eigenvalues of finite multiplicity.
If $1/4$ is an eigenvalue, then it is cuspidal of finite multiplicity which is at most $|\chi(\bar{S})| - 1$ by Otal's result \cite{Ot}.
\end{rem}

We write non-negative real numbers as $\lambda=s(1-s)$, where $1/2\le s\le 1$ or $s\in\C$ satisfies $\Re s=1/2$ and $\Im s\ge0$.
For any $s\in(1/2,1]$, we let $P(s)$ consist of those $1\le i\le n$ such that $s$ is a pole of the Eisenstein series $E_i$ and set $Q(s)=\{1,\dots,n\}\setminus P(s)$.
We say that $s$ is \emph{regular} if $P(s)=\emptyset$ and that $s$ is \emph{singular} otherwise.
All but finitely many $s$ are regular, and singular ones belong to $(1/2,1]$, by \cref{I6.10}.
We say that $s$ is \emph{completely singular} if $Q(s)=\emptyset$.
By \cite[Theorem 6.13]{Iw}, $s=1$ is completely singular.
If $S$ has exactly one end, then $s$ is either regular or completely singular.

\begin{prop}\label{mulres}
The multiplicity of $\lambda=s(1-s)$ as a residual eigenvalue of $\Delta$ is equal to $\rk\res\Phi(s)$ of the scattering matrix $\Phi(s)$.
Moreover, $\rk\res\Phi(s)\ge1$ if $P(s)\ne\emptyset$. 
\end{prop}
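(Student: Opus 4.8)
The plan is to identify the space of residual $\lambda$-eigenfunctions, for $\lambda = s(1-s)$ with $s \in (1/2,1]$, with the image of the residue $\res\Phi(s)$ of the scattering matrix, thereby matching dimensions. First I would set up the linear map. For each $1 \le i \le n$ the residue $\res E_i(\cdot,s)$ is, by \cref{I6.10}, either zero or a nonzero square-integrable $\lambda$-eigenfunction, and these span the residual eigenspace $\mathcal R_\lambda$ at $\lambda$: this spanning statement should be extracted from the constructive description of $\mathcal R$ in \cref{specs} together with the explicit contour-integral/residue computation behind \cite[Theorem 7.3]{Iw} (residual eigenfunctions arise precisely as residues of Eisenstein series when one moves the contour of the Eisenstein transform past $\Re s = 1/2$). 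So the assignment $e_i \mapsto \res E_i(\cdot,s)$, extended linearly, gives a surjection $\Psi \colon \C^n \twoheadrightarrow \mathcal R_\lambda$, and the claim becomes $\dim \mathcal R_\lambda = \rk \res \Phi(s) = n - \dim\ker\res\Phi(s)$; equivalently $\ker\Psi = \ker\res\Phi(s)$.

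Next I would compute both kernels via the constant (order-zero) Fourier coefficients along the cusps. Taking the residue in \eqref{I6.18} kills the $\delta_{ij}y^s$ term and turns the second term into $(\res\vf_{ij}(s))\,y^{1-s}$, while the Whittaker part $\sum_{k\ne0}\vf_{ij}(k,s)W_s(kz)$ contributes a residue $\sum_{k\ne0}(\res\vf_{ij}(k,s))W_s(kz)$ which decays exponentially and in particular has vanishing order-zero coefficient. Hence, for $c = (c_1,\dots,c_n) \in \C^n$, the order-zero Fourier coefficient of $\Psi(c)$ along the $j$-th cusp is $\big(\sum_i c_i \res\vf_{ij}(s)\big) y^{1-s}$, i.e.\ it is governed exactly by the vector $c^{\mathsf T}\res\Phi(s)$. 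Now a square-integrable $\lambda$-eigenfunction $u$ on $S$ with $\lambda < 1/4$ that has vanishing order-zero coefficient along every cusp must be cuspidal, hence lies in $\mathcal C$; but $\mathcal C \perp \mathcal E \supseteq \mathcal R$, so $u \in \mathcal R \cap \mathcal C = 0$. Therefore $\Psi(c) = 0$ iff $c^{\mathsf T}\res\Phi(s) = 0$ iff $c \in \ker(\res\Phi(s)^{\mathsf T})$; since $\res\Phi(s)$ is symmetric for real $s$ (differentiate or take residues in \cref{I6.6}), this is $\ker\res\Phi(s)$, and we conclude $\dim\mathcal R_\lambda = \rk\res\Phi(s)$.

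For the final assertion, suppose $P(s) \ne \emptyset$, say $i \in P(s)$, so $s$ is a pole of $E_i$. By \cref{I6.10} the residue $\res E_i(\cdot,s)$ is then a nonzero square-integrable eigenfunction, so $\mathcal R_\lambda \ne 0$ and $\rk\res\Phi(s) = \dim\mathcal R_\lambda \ge 1$; alternatively, by \cref{I6.9} the pole of $\vf_{ii}$ at $s$ has $\res\vf_{ii}(s) > 0$, so the $(i,i)$ diagonal entry of $\res\Phi(s)$ is nonzero and the matrix has rank at least one.

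The main obstacle I anticipate is the first step — pinning down precisely that the residues $\res E_i(\cdot,s)$ span $\mathcal R_\lambda$ (not merely that they lie in it). This requires going into the proof of the spectral decomposition in \cite{Iw}, specifically the Eisenstein transform / contour-shift argument producing $\mathcal R$, rather than just quoting \cref{specs} as a black box; once that surjectivity is in hand, the rest is the bookkeeping with Fourier coefficients above. A minor auxiliary point is the symmetry of $\res\Phi(s)$ at real $s$, which follows from \cref{I6.6} by taking residues of the Hermitian (hence, for real $s$, real-symmetric) matrix $\Phi(s)$.
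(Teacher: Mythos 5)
Your proof is correct and follows essentially the same approach as the paper's: identify the residual $\lambda$-eigenspace as the image of the linear map $\alpha \mapsto \sum_i\alpha_i\,\res E_i(\cdot,s)$ and determine the kernel by noting that vanishing of the zeroth-order Fourier coefficients (encoded by $\res\Phi(s)$) forces the resulting function into $\mathcal C\cap\mathcal E=0$, so that the dimension count reduces to $\rk\res\Phi(s)$. The only cosmetic differences are that the paper parametrizes by $\C^{P(s)}$ rather than $\C^n$ (the rows of $\res\Phi(s)$ indexed by $Q(s)$ vanish by \cref{I6.9}, so the two give the same rank), your appeal to symmetry of $\res\Phi(s)$ is inessential since a matrix and its transpose have the same rank, and your anticipated obstacle about spanning is moot because the paper defines residual eigenfunctions precisely as residues of Eisenstein series.
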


For example, $\lambda=0$ is a residual eigenvalue of multiplicity one.
Hence the rank of $\res\Phi(1)$ equals one.

\begin{proof}
The space of residual $\lambda$-eigenfunctions consists of all linear combinations $f$ of the residues $\res E_i(.,s)$, $i\in P(s)$.
If 
\begin{align*}
    \sum\nolimits_{i\in P(s)}\alpha_i\res\vf_{ij}(s)=0
\end{align*}
for all $1\le j\le n$, then the Fourier coefficients of order zero of $f$ vanish along the cusps, hence they vanish anywhere.
At the same time, $f$ is perpendicular to $\mathcal C$.
Hence $f=0$, and the first claim follows.
As for the second, we note that $\res\Phi(s)\ne0$ if $P(s)\ne\emptyset$.
\end{proof}

\section{Spectral theory of pseudo-Laplacians}\label{subcdv}

In \cite[Th\'eor\`eme 5]{CV}, Colin de Verdi\`ere obtains a description of the spectrum of $\Delta_a$ in terms of cusp forms of $\Delta_\infty$ and Eisenstein series in the case where $S$ has exactly one cusp.
In this subsection, we extend his result to the case where the surface may have more than one cusp.
His result corresponds to the union of \cref{thmy} and Corollaries \ref{corxx} and \ref{coryy} below in the case of one cusp.
The situation for more than one cusp is more complicated, and many of our arguments are different from his.
We keep the above setup and let $C_{i,b_i}=\Gamma_i\backslash\sigma_i\{y \geq b_i\}$, for $1\le i\le n$.

As in the previous section, let the point spectrum of $\Delta$ in $\mathcal E$, consisting of the finite number of residual eigenvalues, be denoted by
\begin{align*}
    0 = \lambda_0^{\res} < \lambda_1^{\res} \le \lambda_2^{\res} \le \dots \le \lambda_k^{\res} < 1/4.
\end{align*} 
Since $\Delta$ and $\Delta_a$ agree on $\mathcal C$, cuspidal eigenvalues are also eigenvalues of all $\Delta_a$.
For $a>b$, let
\begin{align*}
    0 < \lambda_0^a \le \lambda_1^a \le \lambda_2^a \le \dots
\end{align*}
be the eigenvalues of $\Delta_a$ on the orthogonal complement $\mathcal{E}_a=\mathcal{H}_a\cap\mathcal{E}$ of $\mathcal{C}$.

\begin{thm}\label{thmy}
For all $j\ge0$, $\lambda_j^a$ is continuous in $a$ for $b<a<\infty$.
Moreover, for all $b<a<a'$, we have
\begin{alignat}{2}
    \lambda_j^{\res}<\lambda_j^{a'}<\lambda_j^a &\quad\text{and}\quad \lim\nolimits_{a\to\infty}\lambda_j^a=\lambda_j^{\res} &\quad\text{for $j\le k$}; \tag{1} \label{y1} \\
    1/4<\lambda_j^{a'}<\lambda_j^a &\quad\text{and}\quad \lim\nolimits_{a\to\infty}\lambda_j^a=1/4 &\quad\text{for $j>k$}. \tag{2} \label{y2}
\end{alignat}
\end{thm}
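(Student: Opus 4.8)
My plan is to treat Theorem~\ref{thmy} variationally, via the min--max principle applied to the Rayleigh quotient $R(u)=\|\nabla u\|^2/\|u\|^2$. First, since demanding that the zeroth Fourier coefficient vanish on the larger horoball $C_{i,a_i}$ is the stronger constraint, $\mathcal H_a\subseteq\mathcal H_{a'}$ and hence $\mathcal E_a\subseteq\mathcal E_{a'}$ whenever $a\le a'$; min--max then gives that $N_a(\lambda)$ and each $\lambda_j^a$ are non-increasing in $a$. Moreover $\mathcal E_a\subseteq H^1(S)\cap\mathcal E$, so comparing with the form of $\Delta$ on $\mathcal E$ and using \cref{specs} (below $1/4$ the spectrum of $\Delta|_{\mathcal E}$ consists of exactly the $k+1$ residual eigenvalues, and $1/4$ is the bottom of its essential spectrum) I get $\lambda_j^a\ge\lambda_j^{\res}$ for $j\le k$ and $\lambda_j^a\ge1/4$ for $j>k$.

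Next I would prove \emph{strict} monotonicity, which immediately upgrades these bounds to strict ones: if $\lambda_j^{a_0}$ equalled $\lambda_j^{\res}$ (resp.\ $1/4$), then choosing $a_1>a_0$, strict monotonicity would force $\lambda_j^{a_1}<\lambda_j^{\res}$ (resp.\ $<1/4$), contradicting the lower bound. The key observation is that for $b<a<a'$ the operators $\Delta_a|_{\mathcal E_a}$ and $\Delta_{a'}|_{\mathcal E_{a'}}$ have no common eigenfunction: if $\psi\neq0$ were a $\mu$-eigenfunction of both, then by \cref{h2d} (at level $a'$) $\psi$ would be a genuine $\mu$-eigenfunction of $\Delta$ on the interior of $S_{a'}$, so in each cusp $[\psi]_{i,0}$ solves the ODE \eqref{dfl} with $k=0$ on $(b_i,a'_i)$, while $\psi\in\mathcal H_a$ forces $[\psi]_{i,0}\equiv0$ on $(a_i,a'_i)$; by uniqueness for this linear ODE $[\psi]_{i,0}\equiv0$ on $(b_i,a'_i)$, and with $\psi\in\mathcal H_{a'}$ this makes $\psi$ cuspidal, hence $\psi\in\mathcal C\cap\mathcal E=\{0\}$, a contradiction. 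On the other hand, a soft min--max argument shows that if $A$ is the form-restriction of a self-adjoint operator $B$ with discrete spectrum to a closed subspace $W$ of its form domain and $\lambda_j(A)=\lambda_j(B)$, then there is $0\neq v\in W$ with $Bv=\lambda_jv$, hence $v\in\ker(A-\lambda_j)\cap\ker(B-\lambda_j)$. Applying this with $A=\Delta_a|_{\mathcal E_a}$, $B=\Delta_{a'}|_{\mathcal E_{a'}}$ rules out $\lambda_j^a=\lambda_j^{a'}$; together with monotonicity and continuity this gives $\lambda_j^{a'}<\lambda_j^a$ for $b<a<a'$.

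Continuity and the two limit statements follow from the same mechanism, namely pushing test functions across a horocycle by truncating their zeroth Fourier coefficients in the cusps. For continuity, given a $\mu$-eigenfunction $u$ of $\Delta_a|_{\mathcal E_a}$ and $a'$ near $a$, I subtract from $u$ a smooth cut-off times $[u]_{i,0}$ so that the modified function $\tilde u$ lies in $\mathcal E_{a'}$; on the thin region between $H_a$ and $H_{a'}$ the coefficient $[u]_{i,0}$ solves \eqref{dfl} and vanishes at one endpoint, so with a cut-off supported in a collar of width $\asymp|a-a'|$ one has $\|u-\tilde u\|_{H^1}=o(1)$, and the correction, being a zeroth-mode function in the cusps, keeps $\tilde u$ in $\mathcal E$; applying this to the first $j+1$ eigenfunctions yields $\lambda_j^{a'}\le\lambda_j^a+o(1)$, and symmetrically, so continuity follows from monotonicity. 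For the limits the test functions are fixed. When $j\le k$, truncating an orthonormal basis of the span of the first $j+1$ residual eigenfunctions above $C_a$ costs only $o(1)$ in $H^1$ as $a\to\infty$ — their zeroth coefficients are multiples of $y^{1-s}$ with $s>1/2$ and $\|y^{1-s}\|_{H^1(\{y\ge a_i\})}^2\asymp a_i^{1-2s}\to0$, provided the cut-off is smoothed over a collar whose width grows with $a_i$ — so $\lambda_j^a\le\lambda_j^{\res}+o(1)$, and with the lower bound $\lambda_j^a\downarrow\lambda_j^{\res}$. When $j>k$, I choose $m=j+1$ mutually orthogonal Eisenstein wave packets $\int_0^\eta\psi_\ell(r)E_i(\,\cdot\,,1/2+ir)\,dr\in\mathcal E_i$ with disjoint-support $\psi_\ell$, whose Rayleigh quotients lie in $[1/4,1/4+\eta^2)$ by \cref{specs}; in each cusp their zeroth coefficients have the form $y^{1/2}F(\log y)$ with $F$ Schwartz (using that the $\varphi_{ij}$ are holomorphic on $\Re s=1/2$, \cref{I6.9}), and truncating them above $C_a$ over a wide collar again costs $o(1)$ in $H^1$; the truncations span an $m$-dimensional subspace of $\mathcal E_a$ with Rayleigh quotient $<1/4+\eta^2+o(1)$, so $\lambda_j^a\le1/4+\eta^2+o(1)$, and letting $a\to\infty$ then $\eta\to0$ gives $\lambda_j^a\downarrow1/4$.

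The soft ingredients (the min--max lemma, the invocation of \cref{specs}) and the cusp ODE facts of \S\ref{speclap} are routine, and the strict-monotonicity argument, while the conceptual crux, is short once the common-eigenfunction lemma is in place. What I expect to be the main obstacle is the bookkeeping in the truncation arguments: one must smooth the zeroth-mode corrections over collars whose width has to be taken comparable to $|a-a'|$ for continuity and comparable to $a_i$ for the limits — a fixed-width cut-off produces a derivative term that does not tend to $0$ — and one must verify that these corrections keep the test functions in $\mathcal E$ and do not collapse the dimension of the trial subspaces. Within this, controlling the $H^1$-size of the truncated Eisenstein wave packets in the case $j>k$ is the most delicate point.
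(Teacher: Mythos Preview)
Your plan is correct and shares the paper's variational skeleton: min--max, the inclusions $\mathcal E_a\subseteq\mathcal E_{a'}\subseteq H^1\cap\mathcal E$, and the key observation that a common $\mathcal E_a$/$\mathcal E_{a'}$-eigenfunction would have vanishing zeroth Fourier coefficient on an interval and hence, by ODE uniqueness (the paper says ``by analyticity''), be cuspidal. The implementations diverge in two places. For continuity, the paper avoids your truncation bookkeeping entirely by building a one-parameter family of compactly supported diffeomorphisms $F_{aa'}$ of $S$ (time-$s$ flows of radial vector fields localised near the horocycles) that carry $\mathcal H_{a'}$ isomorphically onto $\mathcal H_a$ and tend to the identity in $C^\infty$ as $a'\to a$; this gives continuity of the min--max values in one stroke without any cusp estimates. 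For the limits, the paper simply invokes that the $\mathcal E_a$ exhaust $H^1\cap\mathcal E$ and appeals to \cref{specs}, rather than constructing explicit residual or wave-packet test functions. Your explicit route is more laborious but has the merit of being quantitative, and the estimates you worry about (Schwartz decay of the wave-packet zeroth modes, collar widths growing with $a_i$) do go through as you indicate. Finally, you spell out the ``soft min--max'' step (that $\lambda_j(A)=\lambda_j(B)$ with $A$ the form-restriction of $B$ forces a common eigenvector) which the paper uses implicitly; your argument there is correct and in fact makes the paper's passage from ``no common eigenfunction'' to strict monotonicity more transparent.
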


\begin{proof}
Let $\vf$ be a smooth function with compact support in $(-2\delta,2\delta)$ such that $\vf=1$ on $(-\delta,\delta)$, where $\delta>0$ is sufficiently small.
For $1\le i\le n$, let $Y_i$ be the vector field on $S$, which vanishes outside the cusp $C_{i,b_i}$, is perpendicular to the horocycles in $C_{i,b_i}$, points away from infinity in $C_{i,b_i}$, and has length $\vf(\ln (y/a_i))$, where we use $\sigma_i^{-1}=x+iy$ as coordinates.
Note that the supports of the $Y_i$ are compact, hence they are complete.

Let $F_{i,s}$ be the flow of $Y_i$.
Then $F_{i,s}$ is equal to the identity outside the $2\delta$-neighborhood of $H_{i,a_i}$ and $F_{i,\ln(a_i'/a_i)}(C_{a_i'})=C_{a_i}$ for $|\ln(a_i'/a_i)|<\delta$.
Furthermore, since $Y$ is perpendicular to horocycles in $C_{i,b_i}$ and of constant length along them, $F_{i,s}$ maps horocycles to horocycles such that pull-back $F_{i,s}^*$ preserves the condition on functions that their Fourier coefficients of order zero along the horocycles vanish.
Hence pull-back $F_{aa'}^*$ with the concatenation $F_{aa'}$ of the $F_{i,\ln(a_i'/a_i)}$ maps $\mathcal{H}_{a'}$ to $\mathcal{H}_{a}$.

Since the $F_{i,s}$ tend to the identity of $S$ in the $C^\infty$-topology as $s$ tends to zero, the first conclusion follows from the variational characterisation of eigenvalues and the min-max principle, which characterizes $\lambda_j^{a}$ as the minimal number $\lambda$ such that, for some $(j+1)$-dimensional subspace $V\subseteq\mathcal{E}_a$,
\begin{align}\label{V}
    \|\nabla u\|_{L^2}^2\le\lambda\|u\|_{L^2}^2 \quad\text{for all $u\in V$.}
\end{align}
Note that $F_{aa'}^*$ does not map $\mathcal{E}_{a'}$ to $\mathcal{E}_a$ since $F_{aa'}^*$ does not preserve volume.
However, since $F_{aa'}$ tends to the identity of $S$ in the $C^\infty$-topology as $a'$ approaches $a$, $F_{aa'}^*(\mathcal{E}_{a'})$ approaches $\mathcal{E}_a$ and Rayleigh quotients approach each other under $F_{aa'}^*$ as well.
The assertion about continuity follows.

By \cref{specs} and the variational characterisation of eigenvalues, the eigenvalue of $\Delta$ on $\mathcal{E}$ corresponding to $\lambda_j(a)$ is obtained by replacing $\mathcal{E}_a$ by $\mathcal{E}$ in the requirement on $V$ in \eqref{V}, hence is equal to $\lambda_j^{\res}$ for $j\le k$ and $1/4$ for $j>k$.
Since $\mathcal{E}_a\subseteq\mathcal{E}_{a'}\subseteq\mathcal{E}$, we conclude the asserted inequalities in weak form.
Now an $\mathcal{E}_a$-eigenfunction of $\Delta_a$ cannot be an $\mathcal{E}_{a'}$-eigenfunction of $\Delta_{a'}$, since otherwise it would have vanishing first Fourier coefficient on the $[a,a']$-part of the cusps, hence on all of the cusps by analyticity on the interior part with respect to $a'$ and would then belong to $\mathcal{C}$.
But since it is orthogonal to $\mathcal{C}$, it would vanish.
This proves the strict inequalities $\lambda_j^{a'}<\lambda_j^a$,
and the proof of the strictness of the other inequalities follows from \cref{specs}.
The limiting behaviour is also a consequence of \cref{specs} since the $\mathcal{E}_a$ exhaust $\mathcal{E}$.
\end{proof}

\begin{cor}\label{cory}
For all $b<a<a'$, we have \[N_a(1/4)\le N_{a'}(1/4)\le N(1/4)\le|\chi(S)|\]
and $N_a(1/4)=N(1/4)$ for all sufficiently large $a$.
\end{cor}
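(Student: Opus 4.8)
The plan is to read off the three inequalities and the limiting statement directly from \cref{thmy} together with the spectral information encoded in \cref{specs} and the min-max principle. First I would observe that the eigenvalues of $\Delta_a$ in $[0,1/4]$ split into the cuspidal ones — which coincide with the cuspidal eigenvalues of $\Delta$ since $\Delta$ and $\Delta_a$ agree on $\mathcal C$ — and the ``Eisenstein part'' eigenvalues $\lambda_j^a$ on $\mathcal E_a=\mathcal H_a\cap\mathcal E$. Thus
\begin{align*}
    N_a(1/4) = \#\{\text{cuspidal eigenvalues }\le 1/4\} + \#\{j : \lambda_j^a\le 1/4\},
\end{align*}
and the first summand is independent of $a$. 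By \cref{thmy}, for $j\le k$ we have $\lambda_j^a<\lambda_j^{a'}$ is false — rather $\lambda_j^{\res}<\lambda_j^{a'}<\lambda_j^a$, so $\lambda_j^a$ is \emph{decreasing} in $a$ and stays $<1/4$ for all $a$ (indeed it converges down to $\lambda_j^{\res}<1/4$); while for $j>k$ we have $1/4<\lambda_j^{a'}<\lambda_j^a$, so these never lie in $[0,1/4]$. Consequently $\#\{j:\lambda_j^a\le 1/4\}$ counts exactly those $j$ with $\lambda_j^a\le 1/4$, a set that can only grow as $a$ increases because each $\lambda_j^a$ is non-increasing in $a$; hence $N_a(1/4)\le N_{a'}(1/4)$ for $b<a<a'$.

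Next I would compare with $N(1/4)$. Since $\mathcal E_a\subseteq\mathcal E$ and the Rayleigh quotient is monotone under restriction of the form's domain, the min-max principle gives $\lambda_j^a\ge\mu_j$ for all $j$, where $\mu_j$ denotes the $j$-th eigenvalue of $\Delta$ on $\mathcal E$; by \cref{specs} these are $\lambda_j^{\res}$ for $j\le k$ and then $1/4$ (as the bottom of the absolutely continuous spectrum) for $j>k$. Therefore $\#\{j:\lambda_j^a\le 1/4\}\le k+1\le\#\{\text{residual eigenvalues in }[0,1/4]\}$, and adding the cuspidal count yields $N_a(1/4)\le N(1/4)$. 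The final inequality $N(1/4)\le|\chi(S)|$ is \cref{mainq}.\ref{nal} (equivalently, it follows from \cite[Th\'eor\`eme 2]{OR} in the orientable and \cite[Theorem 1.5]{BMM} in the general case), as already noted after \cref{mainq}.

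For the last claim, that $N_a(1/4)=N(1/4)$ for all sufficiently large $a$, I would use the limiting behaviour in \cref{thmy}: as $a\to\infty$, $\lambda_j^a\to\lambda_j^{\res}<1/4$ for each $j\le k$, so there exists $a_0$ beyond which $\lambda_j^a<1/4$ for all $j=0,\dots,k$ simultaneously; for $j>k$, $\lambda_j^a>1/4$ always. Hence for $a\ge a_0$ we have $\#\{j:\lambda_j^a\le1/4\}=k+1$, and since the residual eigenvalues lying in $[0,1/4]$ are precisely $\lambda_0^{\res},\dots,\lambda_k^{\res}$ (all strictly below $1/4$, and $1/4$ itself, if an eigenvalue of $\Delta$, is cuspidal by \cref{specrem}), adding the common cuspidal count gives $N_a(1/4)=N(1/4)$. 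The only mild subtlety — which I would address explicitly — is bookkeeping at the threshold $1/4$: one must note that $1/4$ is never an Eisenstein eigenvalue (it is the bottom of the continuous spectrum, not a point eigenvalue, and $\lambda_j^a>1/4$ strictly for $j>k$), so no eigenvalue is lost or gained ``at the boundary'' when passing between $N_a$ and $N$. This is the step I expect to require the most care, though it is not deep given \cref{specs} and \cref{thmy}.
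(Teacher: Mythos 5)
Your proposal takes essentially the same route as the paper's two-line proof, which simply cites \cite[Theorem 1.5]{BMM} for $N(1/4)\le|\chi(S)|$ and then invokes the monotonicity properties of \cref{thmy}; you are filling in the details of how the monotonicity and the cuspidal/Eisenstein decomposition combine.

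One sentence in the middle is inaccurate, though it turns out to be harmless: you assert that for $j\le k$ the eigenvalue $\lambda_j^a$ ``stays $<1/4$ for all $a$ (indeed it converges down to $\lambda_j^{\res}<1/4$).'' Convergence to a limit below $1/4$ as $a\to\infty$ does not yield $\lambda_j^a<1/4$ for \emph{all} $a>b$; \cref{thmy} gives only strict monotonicity and the limit, not a uniform upper bound. In fact this must be able to fail for small $a$ — otherwise the equality $N_a(1/4)=N(1/4)$ would hold for \emph{all} $a$, whereas the corollary deliberately asserts it only for $a$ sufficiently large. Fortunately your subsequent reasoning never uses this false claim: the inequality $N_a(1/4)\le N_{a'}(1/4)$ needs only that each $\lambda_j^a$ is non-increasing in $a$; the bound $N_{a'}(1/4)\le N(1/4)$ needs only that $\lambda_j^a>1/4$ for $j>k$ (so $\#\{j:\lambda_j^a\le1/4\}\le k+1$); and the eventual equality correctly uses the limit $\lambda_j^a\to\lambda_j^{\res}<1/4$ to get $\lambda_j^a<1/4$ for $j\le k$ once $a$ is large enough. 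So the argument is sound; just delete the unjustified ``stays $<1/4$ for all $a$.''
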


\begin{proof}
By \cite[Theorem 1.5]{BMM}, $\Delta=\Delta_\infty$ has at most $|\chi(S)|$ eigenvalues in $[0,1/4]$, including multiplicity.
Therefore the assertion follows immediately from the monotonicity properties stated in \cref{thmy}.
\end{proof}

Consider now a linear combination of Eisenstein series and their residues at a given $s$,
\begin{align}\label{eisres}
    f_s = \sum\nolimits_{i\in Q(s)}\alpha_iE_i(z,s) + \sum\nolimits_{i\in P(s)}\alpha_i\res E_i(z,s).
\end{align}
Their Fourier coefficients of order $0$ along $C_j$ are
\begin{align}\label{eiss}
    \sum\nolimits_{i\in Q(s)}\alpha_i\vf_{ij}(s)y^{1-s} + \sum\nolimits_{i\in P(s)}\alpha_i\res\vf_{ij}(s)y^{1-s} +
    \begin{cases}
     \alpha_j y^s, \\
     \phantom{-}0,
    \end{cases}
\end{align}
for all $j\in Q(s)$ respectively $j\in P(s)$; compare with \eqref{I6.18}.

\begin{thm}\label{thmxx}
For any $s\ne1/2$ as above and $\lambda=s(1-s)$, the $\lambda$-eigenspace of $\Delta_a$ on $\mathcal{E}_a$ consists of the truncation at $a$ of the $f_s$ as in \eqref{eisres} such that, for all $1\le j\le n$,
\begin{align*}
     \sum\nolimits_{i\in Q(s)}\alpha_i\vf_{ij}(s)  + \sum\nolimits_{i\in P(s)}\alpha_i\res\vf_{ij}(s) =
     \begin{cases}
      -\alpha_ja_j^{2s-1} &\text{if $j\in Q(s)$},\\
      \phantom{-\alpha_j}0 &\text{if $j\in P(s)$}.
     \end{cases}
\end{align*}
If $f_s\ne0$, then there is $j\in Q(s)$ with $\alpha_j\ne0$.
\end{thm}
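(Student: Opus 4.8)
The plan is to establish the two inclusions that make up the description of the $\lambda$-eigenspace, and then read off the last assertion from the description. I fix $s\ne1/2$ as in the statement, write $\lambda=s(1-s)$, and use \cref{h2d} and \cref{cinf}: a function lies in the $\lambda$-eigenspace of $\Delta_a$ on $\mathcal E_a=\mathcal H_a\cap\mathcal E$ precisely when it is a smooth $\lambda$-eigenfunction on $S_a$ and on $C_a$, continuous across $H_a$, with $\nabla_\nu u|_{S_a}-\nabla_\nu u|_{C_a}$ locally constant along $H_a$, with vanishing Fourier coefficient of order $0$ on every $C_{i,a_i}$, and orthogonal to $\mathcal C$. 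Given an $f_s$ as in \eqref{eisres} satisfying the displayed linear conditions, I would first note via \eqref{eiss} that these conditions say exactly that the order-$0$ Fourier coefficient of $f_s$ along each $C_j$ vanishes on the horocycle $H_{a_j}$. Its truncation at $a$ --- equal to $f_s$ on $S_a$ and to $f_s$ minus its order-$0$ Fourier coefficient on $C_a$ --- is then continuous across $H_a$, lies in $\mathcal H_a$ by construction, lies in $L^2(S)$ because the surviving cuspidal terms $\sum_{k\ne0}\vf_{ij}(k,s)W_s(kz)$ decay exponentially by \eqref{whiy}, and is orthogonal to $\mathcal C$ since $f_s\in\mathcal E$ and the correction is supported on $C_a$ and constant along horocycles there. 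As the removed order-$0$ term is a combination of $y^s$ and $y^{1-s}$, it is itself a $\lambda$-eigenfunction and has locally constant normal derivative along $H_a$, so \cref{h2d} gives that the truncation lies in $\mathcal D(\Delta_a)$ with $\Delta_a$-eigenvalue $\lambda$; hence it lies in the eigenspace, which proves one inclusion.

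For the reverse inclusion I would take a $\lambda$-eigenfunction $u$ of $\Delta_a$ on $\mathcal E_a$ and reverse the truncation. On each $C_{j,a_j}$, by \cref{cinf} and square-integrability, $u$ equals a sum $\sum_{k\ne0}c_{j,k}W_s(kz)$ of Whittaker functions (the growing solutions in \eqref{fk} being excluded by $L^2$). Define $v$ by keeping $u$ on $S_a$ and adding, on each $C_{j,a_j}$, the solution $c_1^{(j)}y^s+c_2^{(j)}y^{1-s}$ of \eqref{dfl} with $k=0$ whose value and $y$-derivative at $y=a_j$ absorb the jumps of $u$ and of $\nabla_\nu u$ across $H_{a_j}$; since $s\ne1/2$, $y^s$ and $y^{1-s}$ are a fundamental system with non-degenerate data at $y=a_j$, so this solution exists and is unique. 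Then $v$ is $C^1$ across $H_a$ and a classical $\lambda$-eigenfunction on $S_a$ and on each $C_{j,a_j}$, hence a genuine smooth $\lambda$-eigenfunction on $S$ of moderate growth, still orthogonal to $\mathcal C$ because $v-u$ is supported on $C_a$ and constant along horocycles. Invoking the structure of moderate-growth $\lambda$-eigenfunctions orthogonal to $\mathcal C$ from \cref{speclap} --- the functional equation expressing $E_i(z,1-s)$ through the $E_j(z,s)$, \cref{I6.10}, and the fact that the point spectrum of $\Delta$ in $\mathcal E$ consists only of residual eigenvalues (\cref{specs}) --- one obtains that $v$ is a combination $f_s$ as in \eqref{eisres}, with $y^s$-coefficient in cusp $j$ equal to $\alpha_j$ for $j\in Q(s)$ and forced to vanish for $j\in P(s)$. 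Then $u$ is the truncation of $f_s$ at $a$, and the continuity of this truncation across $H_a$ forces the displayed linear conditions, by the computation of the first part.

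The last assertion I would prove by contraposition. If $\alpha_j=0$ for all $j\in Q(s)$, then $f_s=\sum_{i\in P(s)}\alpha_i\res E_i(z,s)$ is a residual eigenfunction lying in $\mathcal R$, hence orthogonal to $\mathcal C$, and the displayed conditions collapse to $\sum_{i\in P(s)}\alpha_i\res\vf_{ij}(s)=0$ for every $1\le j\le n$, the right-hand side $-\alpha_ja_j^{2s-1}$ being zero for $j\in Q(s)$. By \eqref{eiss}, with the terms $\alpha_jy^s$ absent, every order-$0$ Fourier coefficient of $f_s$ along the cusps then vanishes; since such a coefficient solves \eqref{dfl} with $k=0$ for all $y>0$ and vanishes for $y\ge b_j$, it vanishes identically, so $f_s$ is cuspidal, and being orthogonal to $\mathcal C$ it is zero --- this is precisely the argument in the proof of \cref{mulres}. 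Contrapositively, $f_s\ne0$ forces some $\alpha_j\ne0$ with $j\in Q(s)$.

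I expect the main obstacle to be the identification of $v$ in the reverse inclusion: one must extract from the Eisenstein-series theory recalled in \cref{speclap} the exact statement that, for $s\ne1/2$, a moderate-growth $\lambda$-eigenfunction orthogonal to $\mathcal C$ is spanned by the $E_i(z,s)$ with $i\in Q(s)$ together with the residues $\res E_i(z,s)$ with $i\in P(s)$, and in particular carries no $y^s$-term in any cusp belonging to $P(s)$. Handling this requires care with the poles of the scattering matrix $\Phi(s)$ and with the functional equation; the remaining steps are the bookkeeping of \cref{h2d}, \cref{cinf}, and \eqref{eiss}.
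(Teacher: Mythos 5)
Your forward inclusion and the argument for the last assertion match the paper's essentially line by line (for the last assertion, the paper phrases the conclusion in terms of $\ker\res\Phi(s)$, but the content is identical). The genuine difference, and the genuine gap, is in the reverse inclusion.

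For regular $s$, the paper also extends the eigenfunction to all of $S$, but then \emph{subtracts} $\sum\beta_jE_j(\,\cdot\,,s)$ and invokes Lax--Phillips \cite[Theorem 8.4]{LP} to show that the remaining moderate-growth $\lambda$-eigenfunction with only $y^{1-s}$-terms in its zeroth Fourier coefficients must vanish. You call this ``invoking the structure of moderate-growth $\lambda$-eigenfunctions orthogonal to $\mathcal C$ from \cref{speclap}'', but \cref{speclap} does not contain such a structure statement: it contains the existence and meromorphy of the $E_i$, the scattering relation \eqref{I6.18}, and \cref{specs}, none of which alone rules out a nonzero ``exotic'' moderate-growth eigenfunction. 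The uniqueness you need is exactly what \cite[Theorem 8.4]{LP} supplies, and without citing or reproving it, the regular case is not closed.

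The more serious gap is for singular $s$, i.e.\ when $P(s)\ne\emptyset$. Here the paper switches to a completely different device: it uses \cref{thmy} (continuity and strict monotonicity of $\lambda_j^a$ in $a$) to approximate $\lambda$ by $\mathcal E_{a'}$-eigenvalues with $a'>a$ close to $a$ whose associated $s_n$ are \emph{regular}, so the just-established regular case applies to $f_{s_n}$, and then passes to the limit — dividing the coefficients $\alpha_{in}$ for $i\in P(s)$ by $s_n-s$ to extract $\res E_i(\,\cdot\,,s)$, and counting dimensions to see that limits span. Your construction, by contrast, builds $v$ from $u$ by patching in an order-$0$ solution $c_1^{(j)}y^s+c_2^{(j)}y^{1-s}$ on each cusp and then asserts that $v$ ``carries no $y^s$-term in any cusp belonging to $P(s)$''. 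Nothing in your construction forces $c_1^{(j)}=0$ for $j\in P(s)$, and nothing recalled in \cref{speclap} forces every moderate-growth $\lambda$-eigenfunction perpendicular to $\mathcal C$ to have vanishing $y^s$-coefficient in the cusps $j\in P(s)$. (It is true — for instance the regular Laurent part $\tilde E_j(\,\cdot\,,s)$ of $E_j$ at the pole $s$ does carry a $y^s$-term in cusp $j$, but it satisfies $\Delta\tilde E_j=\lambda\tilde E_j+(1-2s)\res E_j$ rather than the eigenvalue equation, so it does not actually provide a counterexample — but this is precisely the kind of argument that has to be supplied.) You flag this yourself as ``the main obstacle''; it is the one the paper's continuity argument is designed to sidestep, and your proposal leaves it open.

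In short: for regular $s$ the difference from the paper is cosmetic and reduces to citing Lax--Phillips; for singular $s$ your route requires an unproved classification of moderate-growth eigenfunctions at a residual $\lambda$, whereas the paper deliberately avoids proving that classification by deforming $a$ and taking limits of the regular case.
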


All but finitely many $s$ are regular, and for them the second sum on the left and the second case on the right are absent.
In this case, the displayed family of equations requires that $-1$ is an eigenvalue of the matrix with entries $\vf_{ij}(s)a_j^{1-2s}$, where $1\le i,j\le n$;
that is, the scattering matrix $\Phi(s)$, multiplied with the vector of $a_j^{1-2s}$.

\begin{cor}[Poles as barriers]\label{corxx}
If $s\in(1/2,1]$ is completely singular, then $\lambda$ is not an $\mathcal{E}_a$-eigenvalue of $\Delta_a$.
In particular, if $\ell$ denotes the number of residual eigenvalues below $\lambda$, counted with multiplicity, then
\begin{align*}
    \lambda_{\ell-1}^a < \lambda_{\ell}^{\res}=\lambda=\lambda_{\ell+r-1}^{\res} < \lambda_{\ell}^a
\end{align*}
for all $b<a<\infty$, where $1\le r=\rk\res\Phi(s)\le n$.
\end{cor}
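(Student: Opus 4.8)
The plan is to extract everything from \cref{thmxx}, \cref{thmy}, and \cref{mulres}; the corollary is essentially their combination. First I would record the arithmetic. Since $s\in(1/2,1]$ is completely singular, $P(s)=\{1,\dots,n\}$ and $Q(s)=\emptyset$, so by \cref{mulres} the number $\lambda=s(1-s)$ is a residual eigenvalue of $\Delta$ of multiplicity $r=\rk\res\Phi(s)$, with $1\le r\le n$ (the lower bound because $P(s)\neq\emptyset$). Writing $\ell$ for the number of residual eigenvalues strictly below $\lambda$, this says exactly that $\lambda^{\res}_\ell=\dots=\lambda^{\res}_{\ell+r-1}=\lambda$. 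Since $s\neq1/2$ forces $\lambda<1/4$, all $\ell+r$ of these residual eigenvalues occur among $\lambda^{\res}_0,\dots,\lambda^{\res}_k$; hence $\ell+r-1\le k$, and in particular $\ell\le k$, so that the inequalities of \cref{thmy} for indices $\le k$ are available.

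The crux is to show that $\lambda$ is not an $\mathcal{E}_a$-eigenvalue of $\Delta_a$, for any $a>b$. By \cref{thmxx}, every $\mathcal{E}_a$-eigenfunction of $\Delta_a$ with eigenvalue $\lambda$ is the truncation at $a$ of some $f_s$ as in \eqref{eisres}; but the last assertion of \cref{thmxx} says that $f_s\neq0$ would force $\alpha_j\neq0$ for some $j\in Q(s)$, which is impossible when $Q(s)=\emptyset$. Hence $f_s=0$, and the $\lambda$-eigenspace of $\Delta_a$ on $\mathcal{E}_a$ is trivial. (Equivalently: when $Q(s)=\emptyset$ the linear system in \cref{thmxx} reduces to $\sum_{i\in P(s)}\alpha_i\res\vf_{ij}(s)=0$ for all $j$, which by the computation in the proof of \cref{mulres} makes the Fourier coefficients of order zero of $f_s$ vanish, so $f_s=0$.)

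It then remains to assemble the straddling inequalities. The right-hand one, $\lambda<\lambda^a_\ell$, is immediate from \cref{thmy}, case \eqref{y1}, with $j=\ell\le k$, since $\lambda^{\res}_\ell<\lambda^a_\ell$ and $\lambda^{\res}_\ell=\lambda$. For the left-hand one, $\lambda^a_{\ell-1}<\lambda$ --- nonvacuous only when $\ell\ge1$, i.e.\ when $s\neq1$ --- I would fix $a>b$ and run out to infinity along the ray $t\mapsto a+t(1,\dots,1)$, $t\ge0$. Along it, $\lambda^a_{\ell-1}$ is continuous by \cref{thmy} and tends, again by \eqref{y1}, to $\lambda^{\res}_{\ell-1}<\lambda$ as $t\to\infty$, while by the crux step it never equals $\lambda$. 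Hence its image along the ray is a connected subset of $\R$ that avoids $\lambda$ yet meets $(-\infty,\lambda)$, so it is contained in $(-\infty,\lambda)$; evaluating at $t=0$ gives $\lambda^a_{\ell-1}<\lambda$. (The monotonicity of $a\mapsto\lambda^a_j$ from \cref{thmy} is consistent with, though not needed for, this step.) In the remaining case $\ell=0$, i.e.\ $s=1$ and $\lambda=0$, the left inequality is vacuous and the chain reduces to $0<\lambda^a_0$, which is part of \cref{thmy}.

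I expect the only genuine obstacle to be the bookkeeping in the first step: one must check that $\ell$ and $\ell+r-1$ bracket exactly the copies of $\lambda$ in the residual spectrum and that $\ell\le k$, so that \cref{thmy}, case \eqref{y1}, stated only for indices at most $k$, really applies. The ``barrier'' mechanism itself is then nothing more than continuity of $a\mapsto\lambda^a_j$ together with the vanishing established in the crux step.
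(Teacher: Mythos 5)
Your proof is correct and follows the same route as the paper's. The paper's own proof is a single line---``Since $s$ is completely singular, $Q(s)=\emptyset$''---which disposes of the first assertion by the last sentence of \cref{thmxx} exactly as you do, and leaves the bookkeeping for the straddling inequalities implicit; your write-up simply makes explicit the reduction $\ell+r-1\le k$, the appeal to \cref{mulres}, and the intermediate-value ``barrier'' argument (continuity of $a\mapsto\lambda^a_{\ell-1}$ plus the fact that $\lambda$ is never attained) that justifies $\lambda^a_{\ell-1}<\lambda$, which is what the title ``Poles as barriers'' is alluding to.
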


\begin{proof}
Since $s$ is completely singular, $Q(s)=\emptyset$.
\end{proof}

In the case of one cusp, either $s$ is regular or completely singular.
Therefore we obtain the following assertions of \cite[Th\'e\`oreme 5]{CV}.

\begin{cor}\label{corxx1}
If $S$ has exactly one cusp, then $\mathcal{E}_a$-eigenvalues of $\Delta_a$ are of multiplicity one and not residual.
More precisely,
\begin{align*}
    0 = \lambda_0^{\res} < \lambda_0^a < \lambda_1^{\res} < \dots < \lambda_k^{\res} < \lambda_k^a.
\end{align*} 
\end{cor}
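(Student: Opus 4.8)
The plan is to read the statement off from the general results of this section, specialized to the one-cusp case $n=1$, in which the scattering matrix collapses to a single scalar meromorphic function $\vf_{11}$.

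First I would treat the multiplicity claim for eigenvalues $\lambda=s(1-s)$ with $s\ne1/2$. Suppose $\lambda$ is such an $\mathcal{E}_a$-eigenvalue of $\Delta_a$. For one cusp every $s$ is either regular or completely singular, and the completely singular ones are excluded by \cref{corxx}; hence $s$ is regular. By \cref{thmxx} the $\lambda$-eigenspace then consists of the truncations at $a$ of the functions $\alpha_1E_1(z,s)$ subject to the single scalar equation $\alpha_1\big(\vf_{11}(s)+a_1^{2s-1}\big)=0$, and since $E_1(\cdot,s)\ne0$ for $s\ne1/2$ by \cref{Ipage91}, this space has dimension at most one. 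This settles the multiplicity claim for every $\mathcal{E}_a$-eigenvalue other than $1/4$.

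Next I would obtain the ``not residual'' assertion together with the chain of inequalities. The residual eigenvalues $0=\lambda_0^{\res}<\lambda_1^{\res}<\dots<\lambda_k^{\res}$ of $\Delta$ are exactly the numbers $s_j(1-s_j)$ attached to the finitely many poles $1=s_0>s_1>\dots>s_k>1/2$ of $E_1$, and each is simple because $\rk\res\Phi(s_j)=\rk\big(\res\vf_{11}(s_j)\big)=1$ by \cref{mulres} and \cref{I6.9}. Being a pole of the only Eisenstein series, each $s_j$ is completely singular, so \cref{corxx} applies at $\lambda=\lambda_j^{\res}$ with $r=1$ and with exactly $\ell=j$ residual eigenvalues strictly below: it gives $\lambda_{j-1}^a<\lambda_j^{\res}<\lambda_j^a$ for all $b<a<\infty$ and $1\le j\le k$, and in particular shows that no residual eigenvalue of $\Delta$ is an $\mathcal{E}_a$-eigenvalue of $\Delta_a$. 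Concatenating these inequalities over $j=1,\dots,k$ and prepending $\lambda_0^{\res}<\lambda_0^a$ (from the strict inequalities of \cref{thmy}) yields $0=\lambda_0^{\res}<\lambda_0^a<\lambda_1^{\res}<\dots<\lambda_k^{\res}<\lambda_k^a$.

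The remaining and, I expect, only genuinely delicate point is the value $\lambda=1/4$, i.e.\ $s=1/2$, which is excluded from \cref{thmxx}. Here the order-zero fundamental solutions degenerate from $y^s,y^{1-s}$ to $y^{1/2},y^{1/2}\ln y$, neither of which is square-integrable, so I would instead work with $E_1(z,1/2)$ and the $s$-derivatives $\partial_s E_1(z,s)|_{s=1/2}$, whose order-zero behaviour along the cusp is controlled by the single number $\vf_{11}(1/2)$; by \cref{I6.6} this number is real of modulus one, hence $\pm1$, with $E_1(\cdot,1/2)\equiv0$ exactly when it equals $-1$ (\cref{Ipage91}). The space of $1/4$-eigenfunctions of $\Delta$ of moderate growth is at most two-dimensional modulo cuspidal ones and is spanned, modulo these, by the functions just listed; imposing the transfer conditions of \cref{h2d} — which for one cusp amount to the single scalar requirement that the order-zero Fourier coefficient of the combination vanish at $y=a_1$ — then pins the $1/4$-eigenspace of $\Delta_a$ on $\mathcal{E}_a$ down to dimension at most one. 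This completes the multiplicity claim; the careful book-keeping of the degeneration at $s=1/2$ and of its interplay with the truncation at $a$, all governed by $\vf_{11}(1/2)=\pm1$, is the part I expect to require the most work, while the rest is a routine specialization of \cref{thmxx}, \cref{corxx} and \cref{thmy}.
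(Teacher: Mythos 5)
Your argument is correct and follows essentially the same route as the paper, which gives no explicit proof for this corollary beyond the one-line observation that for a single cusp every $s$ is either regular or completely singular and then points to \cref{corxx}. You do the natural thing: specialize \cref{thmxx} to $n=1$ to get multiplicity one for $s\ne1/2$, specialize \cref{corxx} together with the simplicity of the residual eigenvalues (a $1\times1$ residue matrix has rank one) to get the interlacing chain and the ``not residual'' claim, and prepend the first inequality from \cref{thmy}.

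The one place where you genuinely add something beyond the paper's remark is the case $\lambda=1/4$, $s=1/2$, which is indeed excluded from \cref{thmxx} and which the paper's remark does not address. Your treatment there is essentially a one-cusp specialization of \cref{thmyy}, \cref{proyy} and \cref{coryy}: for $n=1$ the scattering matrix is the scalar $\vf_{11}(1/2)=\pm1$, $\dim\mathcal{E}(1/4)=1$, and the single scalar vanishing condition of \cref{proyy} pins the $1/4$-eigenspace of $\Delta_a$ on $\mathcal{E}_a$ down to dimension at most one. (A small inaccuracy: you say $\mathcal{E}(1/4)$ is ``at most two-dimensional modulo cuspidal ones''; in fact by \cref{thmyy} it is exactly one-dimensional when $n=1$, but this only makes your bound less tight, not wrong.) Filling in this case is a sensible precaution, since in the chain only $\lambda_k^a$ can a priori equal $1/4$ and the corollary does assert multiplicity one for all $\mathcal{E}_a$-eigenvalues, not just those below $1/4$.
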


\begin{rem}
We see that in the case of exactly one cusp, the $\mathcal{E}_a$-eigenvalues $\lambda_i^{a}$ are contained in the intervals $(\lambda_i^{\res},\lambda_{i+1}^{\res})$ for $i<k$ and $(\lambda_k^{\res},\infty)$, respectively.
Since $\lambda_k^{\res}<1/4$, only $\lambda_k^a$ may pass through $1/4$ as $a$ varies.
For more than one cusp, $\mathcal{E}_a$-eigenvalues might pass through residual eigenvalues $\lambda=s(1-s)$ if $s$ is not completely singular.
\end{rem}

\begin{proof}[Proof of \cref{thmxx}]
The displayed family of equations imply that the conditions of \cref{cinf} are satisfied, since the Fourier coefficients of order $0$ of $f_s$ in the cusp $C_j$ vanish at $a_j$ and since the truncation of $f_s$ consists of deleting the Fourier coefficients of order $0$ beyond $a_j$, for all $1\le j\le n$.
Square-integrability of truncated Eisenstein series $E_i(\sigma_jz,s)$ follows from \cite[(6.20)]{Iw}, which says that
\begin{align*}
    E_i(\sigma_jz,s) = \delta_{ij}y^s + \vf_{ij}(s)y^{1-s} + O(e^{-2\pi y})
\end{align*}
if $s$ is not a pole of $E_i$.
Moreover, the residues of Eisenstein series, and hence their truncations, are also square-integrable.
Thus all the $f_s$ belong to the $\lambda$-eigenspace of $\Delta_a$ on $\mathcal{E}_a$.

To show that the $f_s$ span the space, suppose first that $s$ is regular.
Let $g$ be a $\lambda$-eigenfunction of $\Delta_a$ on $\mathcal{E}_a$.
Then the Fourier coefficient of $g$ of order $0$ on $[b_j,a_j]$ is equal to $\beta_jy^s+\gamma_jy^{1-s}$ for some constants $\beta_j,\gamma_j$, for all $1\le j\le n$.
These solutions to $\Delta f=\lambda f$ extend to all of the $[b_j,\infty)$.
Added to $g$ on the $(a_j,\infty)$, they yield a $\lambda$-eigenfunction $\hat g$ such that $g$ equals the truncation of $\hat g$ at $a$.
Write $s=1/2+it$ with $t<0$ or $t=i\sigma$ with $\sigma<0$.
Then the Fourier coefficient of $\hat g$ of order zero along the $j$-th cusp of $S$ is of the form
\begin{align*}
    \beta_{j}y^s + \gamma_{j}y^{1-s} = y^{1/2}(\beta_{j}y^{it} + \gamma_{j}y^{-it}).
\end{align*}
Therefore the Fourier coefficient of order zero along the $j$-th cusp of
\begin{align*}
    h = g - \sum \beta_j E_j(.,s) 
\end{align*}
does not involve $y^s$, but only $y^{1-s}$ by \eqref{I6.18}.
Hence $f=(h,ith)$ satisfies the assumptions of \cite[Theorem 8.4]{LP} (see line 1 on \cite[page 203]{LP}), and hence $f=0$; hence also $h=0$.

Suppose now, more generally, that $s\ne1/2$, let $\lambda=s(1-s)$, and recall that the cuspidal spectrum of $\Delta$ is discrete and belongs to the spectrum of all $\Delta_{a'}$.
Moreover, the $\mathcal{E}_a$-spectrum of $\Delta_a$ is also discrete, depends continuously on $a$, and is strictly monotonic.
Hence there is an $\ve>0$ such that the number of $\mathcal{E}_{a'}$-eigenvalues of $\Delta_{a'}$ between $\lambda-\ve$ and $\lambda+\ve$, counted with multiplicity, is equal to the multiplicity $\mu$ of $\lambda$ as a $\mathcal{E}_a$-eigenvalue of $\Delta_a$, for all $a'$ sufficiently close to $a$.
By monotonicity, $\Delta_{a'}$, for all $a'>a$ sufficiently close to $a$, has exactly $\mu$ $\mathcal{E}_{a'}$-eigenvalues in $(\lambda-\ve,\lambda)$.

Now consider a sequence of $a_n>a$ converging to $a$ and $\mathcal{E}_{a_n}$-eigenfunctions $f_{s_n}$ with $s_n\to s$ such that $f_{s_n}$ converges to an $\mathcal{E}_a$-eigenfunction $f_s$.
Write $f_{s_n}$ as in \eqref{eisres}, observing that $P(s_n)=\emptyset$.
For $i\in P(s)$, divide the $\alpha_{in}$ by $s_n-s$ and multiply the $E_i(.,s_n)$ by $s_n-s$ to obtain that the limit is of the form as in \eqref{eisres}.
Moreover, the Fourier coefficient of order zero of the limit $f_s$ vanishes at $a$, hence it satisfies the asserted equations in \cref{thmxx}.
Such limits span the whole $\lambda$-eigenspace of $\Delta_a$ on $\mathcal{E}_a$, by a dimension counting argument, and so the first part of the theorem follows.

As for the last assertion, if $\alpha_j=0$ for all $j\in Q(s)$, then the first sum on the left and the first case on the right in the displayed equation of \cref{thmxx} are absent.
Hence the displayed equation requires $(\alpha_1,\dots,\alpha_n)\in\ker\res\Phi(s)$.
Since this condition does not depend on the choice of $a$, the Fourier coefficients of $f_s$ of order zero vanish along all of the cusps, hence anywhere.
Therefore $f_s$ belongs to $\mathcal C$.
At the same time, $f_s$ is perpendicular to $\mathcal{C}$, hence $f_s=0$.
\end{proof}

Following Wolpert \cite[Definition 3.1]{Wo}, we say that a function $f$ on $S$ has moderate growth if there exists a constant $c>0$ such that, for each cusp $C_{i,b_i}$ of $S$, $f(\sigma_iz)=O(y^c)$ as $y\to\infty$, where we write $z=x+iy$ as usual.
As in  \cite[Definition 4.1]{Wo}, we let $\mathcal{E}(1/4)$ denote the space of generalised $1/4$-eigenfunctions of $\Delta$ of moderate growth perpendicular to $\mathcal C$.

The scattering  matrix $\Phi(1/2)=(\vf_{ij}(1/2))$ is Hermitian and unitary, hence $\C^n$ splits orthogonally as the sum of its eigenspaces $E^{\pm}$ for the eigenvalues $\pm1$ with multiplicities $\mu_{\pm}\ge0$.
Set
\begin{align*}
    \mathcal{F}^+ &= \{ \sum\nolimits_{1\le i\le n} \alpha^+_i E_i(z, 1/2) \mid \alpha^+ \in E^+ \},\\
    \mathcal{F}^- &= \{ \sum\nolimits_{1\le i\le n} \alpha^-_i \partial_sE_i(z, 1/2) \mid \alpha^- \in E^- \},
\end{align*}
where $\alpha^\pm=(\alpha_1^\pm,\dots,\alpha_n^\pm)\in E^\pm$.
A word about the definition of $\mathcal{F}^-$: any linear combination
\begin{align*}
    f = \sum\nolimits_i \alpha^-_iE_i(z, 1/2) = 0.
\end{align*}
As for the proof of the latter, since $\alpha^-\in E^-$, the zeroth order Fourier coefficient of $f$ vanishes identically along each cusp of $S$.
Therefore, since $f$ is perpendicular to $\mathcal C$, it must be identically zero, by \cref{specs}.

\begin{thm}\label{thmyy}
We have $\mathcal{E}(1/4) = \mathcal{F}^+ \oplus \mathcal{F}^-$, i.e., any generalised $1/4$-eigenfunction of $\Delta$ of moderate growth perpendicular to $\mathcal C$ is given by a unique linear combination
\begin{align}\label{eispar}
    f_s = f_{1/2} = \sum\nolimits_i\big(\alpha_i^+ E_i(z,1/2) + \alpha_i^-\partial_s E_i(z,1/2)\big).
\end{align}
\end{thm}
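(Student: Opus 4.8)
The plan is to prove the identity by a dimension count: I will show that $\mathcal{F}^+\oplus\mathcal{F}^-$ really is a direct sum contained in $\mathcal{E}(1/4)$, that it has dimension $\mu_++\mu_-=n$, and conversely that $\dim\mathcal{E}(1/4)\le n$. The opposite inclusion is then automatic, and the uniqueness of the combination \eqref{eispar} drops out of the injectivity used along the way.

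First I would verify that $\mathcal{F}^\pm\subseteq\mathcal{E}(1/4)$ and that \eqref{eispar} defines an injective map $E^+\oplus E^-\to\mathcal{E}(1/4)$. Since by \cref{I6.9} the poles of the $\vf_{ij}$ lie in $(1/2,1]$, the functions $E_i(z,1/2)$ and $\partial_sE_i(z,1/2)$ are well defined and smooth, and differentiating $\Delta E_i(z,s)=s(1-s)E_i(z,s)$ in $s$ and setting $s=1/2$ gives $\Delta\,\partial_sE_i(z,1/2)=\tfrac14\,\partial_sE_i(z,1/2)$; so both $E_i(z,1/2)$ and $\partial_sE_i(z,1/2)$ are $1/4$-eigenfunctions of $\Delta$. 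By \eqref{I6.18} their zeroth Fourier coefficients in a cusp are $O(y^{1/2})$ and $O(y^{1/2}\ln y)$, and the higher Fourier coefficients are multiples of the exponentially decaying $K$-Bessel solutions of \eqref{fk} (and of their $s$-derivatives); hence both functions are of moderate growth, and they are perpendicular to $\mathcal{C}$ because Eisenstein series and their $s$-derivatives are. For injectivity, I would compute from \eqref{I6.18}, using $\Phi(1/2)\alpha^\pm=\pm\alpha^\pm$ together with the symmetry and unitarity of $\Phi(1/2)$, that the zeroth Fourier coefficient of $f_{1/2}$ in the cusp $C_j$ equals $2\alpha_j^-\,y^{1/2}\ln y+(\cdots)\,y^{1/2}$; if $f_{1/2}=0$, the coefficient of $y^{1/2}\ln y$ forces $\alpha^-=0$ and then the coefficient of $y^{1/2}$ forces $\alpha^+=0$. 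Thus the map is injective, $\mathcal{F}^+\cap\mathcal{F}^-=0$, and $\dim(\mathcal{F}^+\oplus\mathcal{F}^-)=\mu_++\mu_-=n$ since $\C^n=E^+\oplus E^-$.

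Next I would bound $\dim\mathcal{E}(1/4)\le n$. For $f\in\mathcal{E}(1/4)$, the zeroth Fourier coefficient of $f$ in the cusp $C_j$ solves $-y^2\psi''=\tfrac14\psi$, so equals $a_j(f)\,y^{1/2}+b_j(f)\,y^{1/2}\ln y$ for $y\ge b_j$ by \eqref{fy}, while the higher coefficients, being of moderate growth, are multiples of $y^{1/2}K_0(2\pi|k|y)$ and decay exponentially. The linear map $f\mapsto(a_1(f),\dots,a_n(f),b_1(f),\dots,b_n(f))\in\C^{2n}$ is injective: if it vanishes, then $f$ decays exponentially in every cusp, so $f\in L^2(S)$, and being a $1/4$-eigenfunction perpendicular to $\mathcal{C}$ it must vanish, since by \cref{specs} the point spectrum of $\Delta$ on $\mathcal{E}$ lies in $[0,1/4)$. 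To constrain the image, I would apply Green's formula on the compact surface $S_T$ obtained from $S$ by truncating every cusp at height $T\ge\max_j b_j$: for $f_1,f_2\in\mathcal{E}(1/4)$, since $\Delta f_i=\tfrac14 f_i$,
\[
0=\int_{S_T}\bigl((\Delta f_1)f_2-f_1\,\Delta f_2\bigr)=\int_{\partial S_T}\bigl((\partial_\nu f_1)f_2-f_1\,\partial_\nu f_2\bigr).
\]
Expanding into Fourier modes along each boundary horocycle, the nonzero modes contribute $0$, because the $k$-th and $(-k)$-th coefficients are both moderate-growth solutions of the same second-order ODE, hence proportional, so the Wronskian vanishes; the zeroth mode contributes the Wronskian of $a_j(f_1)y^{1/2}+b_j(f_1)y^{1/2}\ln y$ with $a_j(f_2)y^{1/2}+b_j(f_2)y^{1/2}\ln y$, which is a constant equal, up to sign, to $a_j(f_1)b_j(f_2)-a_j(f_2)b_j(f_1)$. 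Hence $\sum_{j=1}^n\bigl(a_j(f_1)b_j(f_2)-a_j(f_2)b_j(f_1)\bigr)=0$, i.e.\ the image of the injection above is an isotropic subspace for the standard symplectic form on $\C^{2n}$, so has dimension at most $n$. Therefore $\dim\mathcal{E}(1/4)\le n$.

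Combining the two steps yields $\mathcal{E}(1/4)=\mathcal{F}^+\oplus\mathcal{F}^-$, and the injectivity from the first step gives the uniqueness of the representation \eqref{eispar}. I expect the Green's formula step to be the main obstacle: one must check carefully that the truncated generalised eigenfunctions contribute to the boundary integral only through their zeroth Fourier coefficients, that the boundary term is exactly the ($T$-independent) Wronskian with no surviving error term, and that the bookkeeping with the scattering matrix $\Phi(1/2)$ — its symmetry, its unitarity, and the eigenvalue relations on $E^\pm$ — is carried out consistently with the normalization in \eqref{I6.18}.
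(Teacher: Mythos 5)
Your proof is correct, and it differs from the paper's in one substantive respect: where the paper invokes Wolpert's Lemma~4.5 for the bound $\dim\mathcal{E}(1/4)\le n$, you re-derive this bound from scratch via Green's formula on the truncated surfaces $S_T$, observing that the boundary terms reduce to the zeroth-mode Wronskians $a_j(f_1)b_j(f_2)-a_j(f_2)b_j(f_1)$ (the nonzero modes contributing nothing since the $K$-Bessel branch is selected by moderate growth), so that the image of $f\mapsto(a(f),b(f))$ is isotropic for the standard symplectic form on $\C^{2n}$ and therefore at most $n$-dimensional. The injectivity of this map, which you correctly deduce from the absence of $L^2$ eigenfunctions of $\Delta$ on $\mathcal{E}$ at $1/4$ (Theorem~\ref{specs}), is also needed and handled properly. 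What your route buys is a self-contained argument that does not depend on \cite{Wo}; what it costs is having to verify the Wronskian bookkeeping, including the fact that $K_\nu$ is even in $\nu$ so the nonzero Fourier coefficients of $\partial_sE_i(\cdot,1/2)$ are still $K_0$-multiples. The remaining half of the proof — using $\Phi(1/2)\alpha^\pm=\pm\alpha^\pm$ together with the symmetry of $\Phi(1/2)$ to show the zeroth Fourier coefficients of $f_{1/2}$ are $2\alpha_j^-\,y^{1/2}\ln y+(2\alpha_j^++\sum_i\alpha_i^-\vf_{ij}'(1/2))\,y^{1/2}$, whence the map $E^+\oplus E^-\to\mathcal{E}(1/4)$ is injective — is the same computation as in the paper, merely packaged as a single injectivity claim rather than as the two statements $\dim\mathcal{F}^\pm=\dim E^\pm$ and $\mathcal{F}^+\cap\mathcal{F}^-=\{0\}$.
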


\begin{proof}
It follows easily from their definitions that the functions $E_i(z, 1/2)$ and $\partial_s E_i(z, 1/2)$ belong to $\mathcal{E}(1/4)$, for $1\le i\le n$.
Clearly, many of these can be linearly dependent.
Indeed, by \cite[Lemma 4.5]{Wo}, $\mathcal{E}(1/4)$ has dimension at most $n$.

The zeroth Fourier coefficient of $f=\sum\nolimits_i \alpha^+_i E_i(z, 1/2)\in\mathcal{F}^+$ along $C_{j,b_j}$ is
\[
  \big(\alpha_j^+ + \sum\nolimits_i \alpha^+_i\phi_{ij}(1/2) \big) y^{1/2} = 2\alpha_j^+y^{1/2},
\]
where we use $\alpha^+\in E^+$ for the latter equation.
Hence $f\ne0$ if $\alpha^+\ne0$.
Similarly, the zeroth Fourier coefficient of $f=\sum_i\alpha_i^- \partial_s E_i(z, 1/2)\in\mathcal{F}^-$ along $C_{j,b_j}$ is
\begin{align*}
    &\big(\alpha_j^- - \sum\nolimits_i \alpha^-_i\phi_{ij}(1/2) \big) y^{1/2}\ln y + \sum\nolimits_i \alpha^-_i\phi_{ij}'(1/2)y^{1/2} \\
    &= 2\alpha_j^- y^{1/2}\ln y + \sum\nolimits_i \alpha^-_i\phi_{ij}'(1/2)y^{1/2}.
\end{align*}
Hence $f\ne0$ if $\alpha^-\ne0$.
It follows that $\mathcal{F}^\pm$ is of dimension $\dim E^\pm$.
It also follows that $\mathcal{F}^+\cap\mathcal{F}^-=\{0\}$ since the zeroth Fourier coefficients have different growths along the cusps of $S$.
Now $\dim E^++\dim E^-=n$, and hence $\mathcal{E}(1/4) = \mathcal{F}^+ \oplus \mathcal{F}^-$ as asserted.
\end{proof}

As an aside, we note that the above proof shows that $\dim\mathcal{F}^\pm=\dim E^\pm$ and that $\dim\mathcal{E}(1/4)=n$.
The latter equation was also obtained in \cite[Remark 4.6]{Wo}.

\begin{thm}\label{proyy}
For $a > b$, any $1/4$-eigenfunction of $\Delta_a$ in $\mathcal{E}_a$ consists of the truncations at $a$ of all linear combinations $f_s$ as in \eqref{eispar} with vanishing zeroth Fourier coefficient at $a$; that is, such that, for all $1\le j\le n$,
\begin{align*}
   2\alpha_j^+ + 2\alpha_j^-\ln a_j + \sum\nolimits_{1\le i\le n}\alpha_i^-\vf_{ij}'(1/2) = 0.
\end{align*}
\end{thm}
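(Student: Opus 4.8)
The plan is to mirror the proof of \cref{thmxx}, specialised to $s=1/2$, with \cref{thmyy} taking over the role that the regular/singular dichotomy played there. There are two inclusions to prove: that every truncation at $a$ of an $f_{1/2}$ as in \eqref{eispar} satisfying the displayed system is a $1/4$-eigenfunction of $\Delta_a$ lying in $\mathcal{E}_a$, and, conversely, that every $1/4$-eigenfunction of $\Delta_a$ in $\mathcal{E}_a$ arises in this way.

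For the first inclusion, the computation carried out in the proof of \cref{thmyy} shows that the Fourier coefficient of order $0$ of $f_{1/2}$ along the cusp $C_{j,b_j}$ equals $\big( 2\alpha_j^+ + 2\alpha_j^-\ln y + \sum_i\alpha_i^-\vf_{ij}'(1/2) \big) y^{1/2}$, so the displayed system says precisely that this coefficient vanishes at $y=a_j$ for every $j$. The truncation $\tilde f$ of $f_{1/2}$ at $a$ deletes these order-$0$ coefficients on $C_a$ and retains the remaining coefficients, which decay like $e^{-2\pi y}$ by \eqref{I6.18} and its $s$-derivative; hence $\tilde f\in\mathcal{H}_a$ is square-integrable and $\tilde f|_{S_a}$, $\tilde f|_{C_a}$ are smooth. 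Since $f_{1/2}$ is a genuine generalised $1/4$-eigenfunction on $S$, it is smooth across $H_a$, so $\tilde f$ meets the transfer conditions of \cref{h2d}: the traces of $\tilde f|_{S_a}$ and $\tilde f|_{C_a}$ along $H_a$ agree (both having vanishing order-$0$ coefficient there), while their normal derivatives differ exactly by the normal derivative of the deleted order-$0$ coefficient, which is locally constant along $H_a$. Thus $\tilde f\in\mathcal{D}(\Delta_a)$ and $\Delta_a\tilde f=\tfrac14\tilde f$, because $f_{1/2}$ and its order-$0$ part both solve $\Delta u=\tfrac14 u$; and $\tilde f\perp\mathcal{C}$ in $L^2(S)$ because $f_{1/2}\perp\mathcal{C}$ and the subtracted order-$0$ term pairs to zero against every cuspidal function. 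Hence $\tilde f\in\mathcal{E}_a$.

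For the converse, let $g$ be a $1/4$-eigenfunction of $\Delta_a$ in $\mathcal{E}_a$. By \cref{cinf}, $g|_{S_a}$ is smooth and its order-$0$ Fourier coefficient along $C_{j,b_j}$, restricted to $[b_j,a_j]$, solves the order-$0$ ODE at $\lambda=1/4$; by \eqref{fy} it therefore equals $u_j(y):=\beta_j y^{1/2}+\gamma_j y^{1/2}\ln y$ for suitable constants. Extending $u_j$ by the same formula to $[b_j,\infty)$ and adding it back on $C_a$, with all other Fourier coefficients unchanged, yields $\hat g$ with $\hat g|_{S_a}=g|_{S_a}$ and $g$ equal to the truncation of $\hat g$ at $a$. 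The transfer conditions satisfied by $g$ force $u_j(a_j)=0$ (matching of traces along $H_a$) and show that, once $u_j$ is restored, all Fourier coefficients of $\hat g$ agree to first order across $H_a$; since each of them then solves its ODE on the whole of $[b_j,\infty)$, $\hat g$ is smooth on $S$ and satisfies $\Delta\hat g=\tfrac14\hat g$. The coefficients of order $k\neq0$ of $g|_{C_a}$ are square-integrable, hence multiples of the decaying Bessel solutions of \eqref{fk}, so $\hat g$ is of moderate growth, and $\hat g\perp\mathcal{C}$ because $\hat g-g$ is an order-$0$ term supported in $C_a$. Therefore $\hat g\in\mathcal{E}(1/4)$, and \cref{thmyy} gives $\hat g=f_{1/2}$ as in \eqref{eispar} for a unique $\alpha^\pm$. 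Comparing order-$0$ coefficients along each $C_{j,b_j}$ identifies $\beta_j$ and $\gamma_j$ with $2\alpha_j^++\sum_i\alpha_i^-\vf_{ij}'(1/2)$ and $2\alpha_j^-$, so the relation $u_j(a_j)=0$ becomes exactly the displayed system, which completes the argument.

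The step I expect to cost the most effort is the claim that $\hat g$ is a genuine smooth $1/4$-eigenfunction on all of $S$: one must check that the single locally constant jump term permitted by the transfer condition of \cref{h2d} is precisely the jump in the normal derivative of the order-$0$ Fourier coefficient, so that replacing that coefficient by its smooth continuation restores $C^1$---hence, by ellipticity, $C^\infty$---regularity across $H_a$. A minor companion issue is to fix the meaning of perpendicularity to $\mathcal{C}$ for the non-$L^2$ function $\hat g$, by pairing against rapidly decaying cuspidal functions, and to note that adding an order-$0$ term along the cusps leaves this perpendicularity untouched.
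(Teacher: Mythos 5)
Your proposal is correct and follows essentially the same route as the paper: extend the eigenfunction across $H_a$ by continuing the order-$0$ Fourier coefficient (the fundamental solutions $y^{1/2}$ and $y^{1/2}\ln y$), observe that the extension has moderate growth because the original function is $L^2$, invoke \cref{thmyy} to identify the extension with an element of $\mathcal{E}(1/4)=\mathcal{F}^+\oplus\mathcal{F}^-$, and then read off the displayed linear system from the vanishing of the order-$0$ coefficient at $y=a_j$. The paper's proof compresses all of this into three sentences by pointing to the strategy of \cref{thmxx}; you usefully spell out the regularity bookkeeping (the locally constant jump in $\nabla_\nu$ is absorbed entirely by the order-$0$ coefficient, so all $k\neq 0$ coefficients continue $C^1$-and hence analytically-across $H_a$) and you also make the forward inclusion explicit, which the paper leaves implicit as a repetition of the corresponding verification in \cref{thmxx}.
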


\begin{proof}
Following the strategy in the proof of \cref{thmxx}, we can extend any $1/4$-eigenfunction of $\Delta_a$ in $\mathcal{E}_a$ to all of $S$ analytically.
Since any eigenfunction of $\Delta_a$ is in $L^2(S)$, the extended function is of moderate growth, and hence has the form \eqref{eispar}.
The last identity is obtained by considering the zeroth Fourier coefficient of the extended function in each cusp and equating that to zero at $y = a_i$ at the $i$-th cusp.
\end{proof}

\begin{cor}\label{coryy}
The multiplicity of $1/4$ as an eigenvalue of $\Delta_a$ in $\mathcal{E}_a$ equals
\begin{align*}
    \mu = \dim\{\alpha\in E^- \mid (D_a+\Phi'(1/2)^{t})\alpha\in E^+\},
\end{align*}
where $D_a$ denotes the diagonal matrix with entries $2\ln a_j$.
In particular, $\mu\le\dim E^-$ and, if $\Phi(1/2)=-I$, then $\mu=\dim\ker(D_a+\Phi'(1/2)^{t})$.
\end{cor}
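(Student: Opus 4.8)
The plan is to read the multiplicity directly off Theorems~\ref{thmyy} and~\ref{proyy} and then recast the resulting linear conditions as a single matrix equation. First I would use \cref{proyy} to identify the $1/4$-eigenspace of $\Delta_a$ on $\mathcal{E}_a$ with the image, under truncation at $a$, of the space $\mathcal{V}_a$ of those $f_{1/2}$ of the form \eqref{eispar} whose coefficients $\alpha^\pm\in E^\pm$ satisfy the $n$ equations of \cref{proyy}. Truncation at $a$ leaves a function unchanged on the compact part $S_a$, whose interior is non-empty, and every element of $\mathcal{E}(1/4)$ is real-analytic on the connected surface $S$; hence an element of $\mathcal{V}_a$ whose truncation vanishes would vanish on an open set and so identically, and truncation is injective on $\mathcal{V}_a$. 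Combined with \cref{thmyy}, according to which $(\alpha^+,\alpha^-)\mapsto f_{1/2}$ is a linear isomorphism of $E^+\oplus E^-$ onto $\mathcal{E}(1/4)$, this shows that $\mu=\dim\mathcal{V}_a$, i.e.\ $\mu$ equals the dimension of the space of pairs $(\alpha^+,\alpha^-)\in E^+\times E^-$ satisfying the equations of \cref{proyy}.

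Next I would rewrite those equations in matrix form. For fixed $j$, the $j$-th equation of \cref{proyy} reads
\[
2\alpha_j^+ + 2\alpha_j^-\ln a_j + \sum\nolimits_{1\le i\le n}\alpha_i^-\vf_{ij}'(1/2)=0 .
\]
Here $2\alpha_j^-\ln a_j$ is the $j$-th entry of $D_a\alpha^-$ and, since $\vf_{ij}'(1/2)$ is the $(i,j)$-entry of $\Phi'(1/2)$, the last sum is the $j$-th entry of $\Phi'(1/2)^{t}\alpha^-$. Hence the whole system is equivalent to the single vector equation $2\alpha^+ = -\big(D_a+\Phi'(1/2)^{t}\big)\alpha^-$. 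For a given $\alpha^-\in E^-$ this determines $\alpha^+$, and the resulting vector lies in $E^+$ if and only if $(D_a+\Phi'(1/2)^{t})\alpha^-\in E^+$, using that $E^+$ is a linear subspace. Therefore $(\alpha^+,\alpha^-)\mapsto\alpha^-$ is an isomorphism of $\mathcal{V}_a$ onto $\{\alpha\in E^-\mid (D_a+\Phi'(1/2)^{t})\alpha\in E^+\}$, which gives the asserted value of $\mu$.

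The two remaining claims are immediate. The displayed set is a subspace of $E^-$, so $\mu\le\dim E^-$. If $\Phi(1/2)=-I$, then by definition $E^-=\C^n$ and $E^+=\{0\}$, so the condition $(D_a+\Phi'(1/2)^{t})\alpha\in E^+$ becomes $(D_a+\Phi'(1/2)^{t})\alpha=0$, whence $\mu=\dim\ker(D_a+\Phi'(1/2)^{t})$.

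I do not expect a genuine obstacle here: the analytic substance is carried by Theorems~\ref{thmyy} and~\ref{proyy}, and what remains is bookkeeping. The two points deserving a little care are the injectivity of truncation on $\mathcal{V}_a$, so that no dimension is lost in passing from the functions $f_{1/2}$ to their truncations, and the correct identification of $\sum_i\alpha_i^-\vf_{ij}'(1/2)$ as the $j$-th entry of $\Phi'(1/2)^{t}\alpha^-$ rather than of $\Phi'(1/2)\alpha^-$, which rests only on the convention that $\vf_{ij}$ sits in the $(i,j)$-slot of the scattering matrix.
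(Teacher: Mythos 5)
Your proof is correct and is essentially the argument the paper has in mind: Corollary~\ref{coryy} is stated without proof as an immediate consequence of Theorems~\ref{thmyy} and~\ref{proyy}, and what you have written is exactly the bookkeeping — injectivity of truncation via analyticity, the rewriting of the equations of \cref{proyy} as the matrix identity $2\alpha^+=-(D_a+\Phi'(1/2)^{t})\alpha^-$, and the projection to $\alpha^-$ — needed to make that deduction explicit.
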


\begin{rem}\label{remps}
Phillips and Sarnak conjectured that $\Phi(1/2) = -I$ for a generic hyperbolic metric of finite area on a surface of finite topological type; see \cite[page 28]{PS}. 
\end{rem}

\begin{cor}\label{corxy}
Let $f\in L^2(S)$ be a $\lambda$-eigenfunction of $\Delta_a$ on $\mathcal{E}_a$.
Then $f$ is the truncation of a unique $\lambda$-eigenfunction $\hat f$ of $\Delta$ on $\mathcal{E}$,
and $\hat f$ is of moderate growth, but not square-integrable.
\end{cor}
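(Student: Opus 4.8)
The plan is to obtain $\hat f$ for free from the structure theorems for the $\mathcal E_a$-eigenspaces of $\Delta_a$, and then to verify moderate growth, failure of square-integrability, and uniqueness. Writing $\lambda = s(1-s)$, I would distinguish two cases. If $\lambda \ne 1/4$, then by \cref{thmxx} the eigenfunction $f$ is the truncation at $a$ of some $f_s$ as in \eqref{eisres}, a linear combination of the Eisenstein series $E_i(\cdot,s)$ with $i\in Q(s)$ and the residues $\res E_i(\cdot,s)$ with $i\in P(s)$, subject to the matching relations displayed there; I set $\hat f = f_s$. If $\lambda = 1/4$, then by \cref{proyy} (together with \cref{thmyy}) the eigenfunction $f$ is the truncation at $a$ of a unique element $f_{1/2}\in\mathcal E(1/4)$ of the form \eqref{eispar} satisfying the matching condition $2\alpha_j^+ + 2\alpha_j^-\ln a_j + \sum_i\alpha_i^-\vf_{ij}'(1/2)=0$ for all $j$; I set $\hat f = f_{1/2}$. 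In either case $\hat f$ is a genuine smooth $\lambda$-eigenfunction of $\Delta$ on $S$, assembled from Eisenstein series, their residues, and (at $1/4$) their $s$-derivatives, so it is orthogonal to the cuspidal subspace $\mathcal C$; this is the sense in which $\hat f$ lives on $\mathcal E$, and by construction $f$ is its truncation at $a$.

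For moderate growth I would note that along the $j$-th cusp the zeroth Fourier coefficient of each of $E_i(\cdot,s)$, $\res E_i(\cdot,s)$, $\partial_s E_i(\cdot,s)$ is a combination of $y^s$ and $y^{1-s}$ (of $y^{1/2}$ and $y^{1/2}\ln y$ when $s=1/2$), which is $O(y)$, while the nonzero Fourier modes decay exponentially by the Whittaker asymptotics \eqref{whiy}; off the cusps $\hat f$ is smooth on a compact set. Hence $\hat f$ is of moderate growth --- automatic, in fact, in the case $\lambda=1/4$, since there $\hat f\in\mathcal E(1/4)$.

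The one step with real content is that $\hat f\notin L^2(S)$. Since $f$ is an eigenfunction it is nonzero, hence so is $\hat f$. When $s\ne1/2$, the final clause of \cref{thmxx} produces $j\in Q(s)$ with $\alpha_j\ne0$; as $y^s$ and $y^{1-s}$ are linearly independent, the zeroth Fourier coefficient of $\hat f$ along $C_j$ then genuinely contains the term $\alpha_j y^s$, whose modulus squared $|\alpha_j|^2y^{2\Re s}$ with $\Re s\ge1/2$ is not integrable against $y^{-2}\,dy$ near $y=\infty$. When $\lambda=1/4$, the matching condition first forces $\alpha^-\ne0$ (if $\alpha^-=0$ it reads $\alpha_j^+=0$ for all $j$, so $\hat f=0$, absurd); choosing $j$ with $\alpha_j^-\ne0$ and inserting the matching condition into the zeroth-Fourier-coefficient computation from the proof of \cref{thmyy}, that coefficient along $C_j$ equals $2\alpha_j^- y^{1/2}\ln(y/a_j)$, whose modulus squared is again not integrable against $y^{-2}\,dy$ near $y=\infty$. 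In both cases $\hat f\notin L^2(S)$.

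For uniqueness I would take two admissible candidates $\hat f_1,\hat f_2$, note that their difference $g$ is a $\lambda$-eigenfunction of $\Delta$ vanishing on the nonempty open set $S\setminus C_a$ (where truncation does nothing), and invoke real-analyticity of $\Delta$-eigenfunctions together with connectedness of $S$ to conclude $g\equiv0$. The main obstacle, such as it is, lies in the square-integrability step: everything hinges on the final assertions of \cref{thmxx} (and on $\alpha^-\ne0$ at $1/4$), which are exactly what prevents the dangerous $y^s$ --- respectively $y^{1/2}\ln y$ --- contribution to the zeroth Fourier coefficient from cancelling; the rest is bookkeeping on top of Theorems~\ref{thmxx}, \ref{thmyy} and~\ref{proyy}.
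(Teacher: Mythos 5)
Your proof is correct and is the natural argument given Theorems~\ref{thmxx}, \ref{thmyy}, and~\ref{proyy}; the paper in fact states \cref{corxy} without a separate proof precisely because it is assembled this way, so your approach coincides with the intended one. Two small remarks. First, in the non-$L^2$ step for $\Re s = 1/2$, $s\neq 1/2$, the $y^{1-s}$ term has the same growth as $\alpha_j y^s$, so one should note briefly that the squared modulus of $\alpha_j y^s + \beta_j y^{1-s}$ integrated against $y^{-2}\,dy$ still diverges: writing $y^s = y^{1/2}y^{it}$, the cross term oscillates and integrates to a bounded quantity, while $(|\alpha_j|^2 + |\beta_j|^2)/y$ is not integrable once $\alpha_j\neq0$; your invocation of linear independence is the right idea but the imprecision is worth closing. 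Second, it is worth remembering that $\partial_s E_i(\cdot,1/2)$ is not merely a \emph{generalised} $1/4$-eigenfunction but a genuine one, since differentiating $\Delta E_i(\cdot,s)=s(1-s)E_i(\cdot,s)$ at $s=1/2$ gives $\Delta \partial_s E_i(\cdot,1/2)=\tfrac14\,\partial_s E_i(\cdot,1/2)$ because $\frac{d}{ds}[s(1-s)]$ vanishes at the branch point; this justifies calling $\hat f$ a $\lambda$-eigenfunction of $\Delta$ in the $\lambda=1/4$ case. The uniqueness argument by analytic continuation from $S_a$ (whose interior is open and nonempty) is clean and correct.
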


\subsection{The non-orientable case}\label{subnono}

In \cref{speclap} and the above part of \cref{subcdv}, we assume throughout that $S$ is orientable.
Now we explain shortly, how the results relevant for the proof of \cref{mainq} and the remaining sections follow in the non-orientable case.

To that end, let $S$ be a non-compact non-orientable hyperbolic surface of finite area and $p\colon S'\to S$ be the orientation covering with covering transformation $\tau$ of $S'$, where $S'$ is endowed with the lifted metric.
Then $S$ is diffeomorphic to a closed non-orientable surface with a finite number of punctures $p_1,\dots,p_n$.
As in the orientable case, we can choose pairwise disjoint closed cusps $C_{i,b_i}$ around the $p_i$ with horocycles $H_{i,b_i}$ as boundaries.
Since cusps are orientable, each $C_{i,b_i}$ lifts to two isometric cusps $C_{i,b_i}'$ and $C_{i,b_i}''$ in $S'$, interchanged by $\tau$.

Keeping the notation as before, we obtain, for any $a>b$, a pseudo-Laplacian $\Delta_a$ on $S$.
The pull-back with $p$ intertwines $\Delta_a$ with the corresponding pseudo-Laplacian $\Delta_a'$ on $S'$,
where here $\Delta_a'$ is associated to the choices $a_i$ for $C_{i,a_i}'$ and $C_{i,a_i}''$.
It follows that eigenfunctions of $\Delta_a$ lift to eigenfunctions of $\Delta_a'$ and that the spectrum of $\Delta_a$ is discrete.
This is sufficient for our discussion in the following sections.

Now \cref{mainq} follows immediately from \cref{thmy} in the orientable case; hence a version of \cref{thmy} in the non-orientable case is sufficient for our purposes.
Since the pull-back of $\mathcal{C}$ equals $\mathcal{C}'$,
the orthogonal complement $\mathcal{E}$ of $\mathcal{C}$ in $L^2(S)$ pulls back to $\mathcal{E}'$
and $\Delta_a$ leaves $\mathcal{C}$ and $\mathcal{E}$ invariant.
The pull back of eigenfunctions of $\Delta_a$ consists of $\tau$-invariant eigenfunctions on $S'$.
Now going through the proof, we see that \cref{thmy} holds true in the non-orientable case as well.

It is also noteworthy that \cref{corxy} extends to the non-orientable case. Indeed, if $f \in L^2(S)$ is a $\lambda$-eigenfunction of $\Delta_a$ on $\mathcal{E}_a$, then its lift $f^\prime$ to $S^\prime$ is a $\lambda$-eigenfunction of $\Delta_a^\prime$ on $\mathcal{E}_a^\prime$. Therefore, $f^\prime$ is the truncation of a $\lambda$-eigenfunction $\hat{f}$ of $\Delta^\prime$ on $\mathcal{E}^\prime$. The assertion follows after noticing that $\hat{f}$ is $\tau$-invariant, since $\hat{f} - \hat{f} \circ \tau$ is a $\lambda$-eigenfunction of $\Delta^\prime$ vanishing in $S_a^\prime$.

\section{Analytic systoles}\label{secana}

Now we bring an adapted version of the analytic systole \cite{BMM} into play.
Note first that the Rayleigh quotient of any non-vanishing $u\in D(\Delta_a)$ is given by
\begin{align*}
    \Ray(u) = \frac{\la\Delta_au,u\ra_0}{\|u\|_0^2} = \frac{\|\nabla u\|_0^2}{\|u\|_0^2}.
\end{align*}
If $(\Omega_i)_i$ denotes the family of nodal domains of $u$, then we obtain
\begin{align*}
    \Ray(u) = \frac{\sum_i\int_{\Omega_i}|\nabla u|^2}{\sum_i\int_{\Omega_i}|u|^2}
\end{align*}
For any domain $\Omega\subseteq S$, let
\begin{align*}
    \lambda_0(\Omega) = \inf_{u\in\Lip_0(\Omega)}\Ray(u).
\end{align*}
The infimum on the right is also achieved by letting $u$ run over smooth functions with compact support in $\Omega$.

Recall from \cite{BMM} the concept of \emph{analytic systole} of $S$,
\begin{align*}
    \Lambda(S) = \inf_\Omega\lambda_0(\Omega),
\end{align*}
where $\Omega$ runs over all domains in $S$ with piecewise smooth boundary which are homeomorphic to closed discs or annuli or M\"obius bands.
Since we take an infimum, we can assume without loss of generality that the competing domains are compact with smooth boundary.
Because of the cusps, we have $\Lambda(S)=1/4$, which is not good enough for our purposes, and we need to refine the notion.

We distinguish the following types of compact domains $\Omega$ with piecewise smooth boundary in $S$:

Type 1: $\Omega$ is an embedded disc.
Then $\Omega$ lifts isometrically to a disc $\tilde\Omega$ in the hyperbolic plane and, hence, by the Faber-Krahn inequality, we have
\begin{align}\label{disc}
    \lambda_0(\Omega) = \lambda_0(\tilde\Omega) \ge \lambda_0(D_{|\Omega|}) \ge \lambda_0(D_{|S|}),
\end{align}
where $D_V$ denotes a disc in the hyperbolic plane of area $V$.

Type 2: $\Omega$ is an embedded annulus, whose boundary circles are homotopically trivial.
Then a connected component of the complement of $\Omega$ in $S$ is a disc $B'$ in $S$ which we glue to $\Omega$ to obtain a disc $B=\Omega\cup B'$.
We obtain
\begin{align}\label{aht}
    \lambda_0(\Omega) \ge \lambda_0(B) \ge \lambda_0(D_{|B|}) \ge \lambda_0(D_{|S|}).
\end{align}

Type 3: $\Omega$ is an embedded annulus, whose boundary circles are homotopic to a simple closed geodesic $c$ in $S$.

Type 4: $\Omega$ is an embedded M\"obius band.
Then the boundary circle of $\Omega$ is homotopic to a closed geodesic $c$ in $S$.

Type 5:  $\Omega$ is an annulus, whose boundary circles are homotopic to a horocycle in (exactly) one of the cusps of $S$
such that $\Omega$ reflects the requirement that functions in $\mathcal{H}_a$ have vanishing first Fourier coefficients beyond $H_a$.
That is, we assume further that, for each horocycle $H_y$ above $H_a$, if the interior of such an annulus intersects $H_y$, then its boundary meets $H_y$ in at least two points.

Finally, we define the \emph{analytic systole} of $S$ with respect to $\Delta_a$ to be
\begin{align*}
    \Lambda_a(S) = \inf_\Omega \lambda_0(\Omega)
\end{align*}
where $\Omega$ runs over all domains of the above five types.

\begin{prop}\label{prop:analytic-systole}
For any $a>b$, we have $\Lambda_a(S) > 1/4$. Furthermore, for any domain $\Omega$ of one of the above types, we have $\lambda_0(\Omega)>\Lambda_a(S)$.
\end{prop}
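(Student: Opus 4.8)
The plan is to prove the two assertions simultaneously by establishing a uniform lower bound strictly above $1/4$ for $\lambda_0(\Omega)$ over each of the five types of domains. Since $\Lambda_a(S)$ is defined as the infimum over all five types, a uniform bound $\lambda_0(\Omega) \ge 1/4 + \delta$ for some $\delta > 0$ (independent of $\Omega$) gives $\Lambda_a(S) \ge 1/4 + \delta > 1/4$; and then for \emph{any} single competing domain $\Omega$ we would get $\lambda_0(\Omega) \ge 1/4 + \delta$, but to get the strict inequality $\lambda_0(\Omega) > \Lambda_a(S)$ we need a small extra argument (see below).

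First, Types 1 and 2 are already handled by the displayed inequalities \eqref{disc} and \eqref{aht} in the excerpt: in both cases $\lambda_0(\Omega) \ge \lambda_0(D_{|S|})$, the bottom Dirichlet eigenvalue of a hyperbolic disc of area $|S|$, which is a fixed constant strictly greater than $1/4$ (since any bounded domain in $\mathbb{H}$ has bottom spectrum $> 1/4$). Next, Types 3 and 4 (annuli or M\"obius bands around a closed geodesic $c$): here I would argue as in \cite{BMM}, comparing $\Omega$ with a maximal embedded collar/half-collar around the geodesic $c$. The bottom of the Dirichlet spectrum of a hyperbolic cylinder of circumference $\ell$ and any width is bounded below by $1/4 + \pi^2/(\ell^2 + \text{something})$-type estimates; more robustly, one uses that the collar lemma forces $\ell(c)$ and the collar width to be linked, and that $\lambda_0$ of the full collar is $> 1/4$ with an explicit lower bound depending only on $\ell(c)$, which in turn is bounded below by the systole of $S$ — a fixed positive constant. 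So Types 3 and 4 also yield a uniform bound $> 1/4$ depending only on $S$. The genuinely new case is Type 5, the annuli around a horocycle subject to the two-point intersection condition; this is where I expect the main obstacle to lie, and it is precisely the configuration that the refined definition of $\Lambda_a$ was designed to control. The idea is that inside a cusp $C_{i,b_i}$, writing $z = x+iy$, the Laplacian is $-y^2(\partial_x^2 + \partial_y^2)$, and a function supported in such an annulus that vanishes on the boundary and meets each horocycle $H_y$ (above $H_a$) in at least two points is forced to genuinely oscillate in the $x$-direction on that horocycle — it cannot be (close to) constant in $x$ there. One then runs a one-dimensional Poincar\'e/Wirtinger argument horocycle-by-horocycle: on each $H_y$ the two-point vanishing condition gives $\int_{H_y} |\partial_x u|^2 \ge (\text{const}) \int_{H_y} |u|^2$ with the constant controlled by the length $1/y$ of $H_y$, hence for $y \le$ (the maximal height reached, which is bounded because the annulus is embedded with two boundary points on each horocycle it meets) one gets $\int |\nabla u|^2 \ge \int y^2 |\partial_x u|^2 \cdot y^{-2}\cdots$, combined with the universal $1/4$ coming from $-y^2\partial_y^2$ on the cusp. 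Carefully done, this gives $\lambda_0(\Omega) \ge 1/4 + \delta_i$ with $\delta_i > 0$ depending only on $a_i$ and $b_i$; since there are finitely many cusps, taking the minimum over $i$ of all the constants from all five types yields the uniform $\delta > 0$.

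For the furthermore statement — the \emph{strict} inequality $\lambda_0(\Omega) > \Lambda_a(S)$ for every competing $\Omega$ — I would argue as follows. Since $\Lambda_a(S) > 1/4$, fix $\varepsilon$ with $1/4 < \varepsilon < \Lambda_a(S)$. For any competing domain $\Omega$ of one of the five types, $\lambda_0(\Omega) \ge \Lambda_a(S) > \varepsilon$. If $\lambda_0(\Omega)$ is attained by a genuine eigenfunction (e.g.\ when $\Omega$ is compact with smooth boundary, which by the reduction in the excerpt we may assume), then a strict monotonicity/domain-enlargement argument applies: $\Omega$ is a proper subdomain of $S$, and we can strictly enlarge it — within the same topological type or into a slightly larger competing configuration — without decreasing $\lambda_0$ by much, while the strict domain monotonicity of the Dirichlet eigenvalue (unique continuation: a ground state on $\Omega$ cannot extend by zero to a strictly larger domain as an eigenfunction) forces $\lambda_0(\Omega) > \lambda_0(\Omega')$ for the enlarged $\Omega'$; but $\lambda_0(\Omega') \ge \Lambda_a(S)$ still, since enlarging keeps us (or can be arranged to keep us) among the admissible types or leads to a Type 1/2 disc. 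Chaining these, $\lambda_0(\Omega) > \Lambda_a(S)$. The main obstacle here is making the enlargement respect the Type-5 two-point condition; but for Types 1–4 the enlargement argument is completely standard, and for Type 5 one can instead invoke the uniform bound from the previous paragraph directly: the proof there actually produces $\lambda_0(\Omega) \ge 1/4 + \delta_i$ where $\delta_i$ can be taken \emph{strictly larger} than the contribution of any single horocyclic annulus to $\Lambda_a(S)$ only after noting that the infimum defining $\Lambda_a(S)$ over Type-5 domains is not attained (a minimizing sequence of such annuli degenerates), so no individual Type-5 domain achieves $\Lambda_a(S)$, hence $\lambda_0(\Omega) > \Lambda_a(S)$ for each. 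I would present the non-attainment of the Type-5 infimum as the key lemma closing the argument.
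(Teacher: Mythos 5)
Your handling of Types 1--4 matches the paper: Faber--Krahn for Types 1 and 2, and a collar/systole estimate for Types 3 and 4 (the paper cites \cite{M1} and \cite[Theorem 4.1(1)]{BMM2}). It is Type 5 where your sketch diverges from the paper and contains a genuine gap.

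You assert that ``the maximal height reached \ldots is bounded because the annulus is embedded with two boundary points on each horocycle it meets.'' This is false: a Type 5 annulus may extend arbitrarily far up the cusp (the two-point boundary condition constrains how it meets each $H_y$, not how high it goes). Consequently a direct horocycle-by-horocycle Wirtinger estimate cannot, by itself, produce a uniform lower bound $\lambda_0(\Omega)\ge 1/4+\delta_i$ depending only on $a_i,b_i$. The Wirtinger argument you describe \emph{is} in the paper, but only as Lemma~\ref{cusp estimate}, a \emph{decay estimate} that controls the mass of a competitor high in the cusp; it does not touch the part of $\Omega$ lying below $H_a$, where the two-point condition is void. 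The paper therefore does not prove a uniform direct bound over Type 5 domains; instead it argues by contradiction with a compactness/escape-of-mass scheme: assume $\lambda_0(\Omega_n)\to\lambda\le 1/4$; lift the test functions to the bi-infinite cusp $\Gamma_i\backslash\mathbb H$; invoke \cite[Proposition 6.2]{P} (the cusp has $\lambda_0=1/4$ and $1/4$ is not an eigenvalue) to conclude $\lambda=1/4$ and the lifts converge weakly to $0$; combine with Lemma~\ref{cusp estimate} to upgrade this to $L^2(\mathcal C_y)$-convergence for every $y$; then cut off near the base of the cusp so the remaining functions still have Rayleigh quotient tending to $1/4$ but are supported away from a curve joining the two boundary components of the relevant annulus, hence inside a topological disc of area $\le|S|$ -- and Faber--Krahn gives the contradiction. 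Your proposal is missing both the weak-convergence/lifting step and the cutoff-to-a-disc step, which are the heart of the Type 5 argument.

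For the second assertion, your instinct to use strict domain monotonicity is right, and it is exactly what the paper does -- but the paper's argument is much simpler than what you outline. Since $S$ is non-compact and $\Omega$ is compact, the complement of $\Omega$ is a non-empty open set, so one can strictly enlarge $\Omega$ to $\Omega'$ of the same type; strict domain monotonicity then gives $\lambda_0(\Omega)>\lambda_0(\Omega')\ge\Lambda_a(S)$. Your worry about Type 5 is unfounded: enlarging downward into $S_a$ does not affect which horocycles above $H_a$ the interior meets, so the two-point condition is preserved. The auxiliary ``non-attainment of the Type 5 infimum'' lemma you propose is not needed and would essentially presuppose the compactness argument you skipped.
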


\begin{proof}
To prove the first assertion, we can consider the five types of domains as above separately.
For $\Omega$ of Type 1 and 2, $\lambda_0(\Omega)$ is bounded from below by $\lambda_0(D_{|S|})$, by the Faber-Krahn inequality.
Now by monotonicity, the latter is strictly larger than $\lambda_0$ of  the hyperbolic plane, which is $1/4$.
For $\Omega$ of Type 3 and 4, $\lambda_0(\Omega)$ is bounded from below by
\begin{align*}
    \frac14 + \min\bigg\{\frac{\pi}{|S|},\frac{\sys(S)^2}{|S|^2}\bigg\},
\end{align*}
where $\sys(S)$, the \emph{systole of $S$}, is the minimal length of simple closed geodesics in $S$; see \cite{M1} and \cite[Theorem 4.1(1)]{BMM2}.
Since $\sys(S)>0$, the above term is strictly larger than $1/4$.

It remains to consider domains of Type 5.
Let $\Omega_n$ be a competing sequence of domains in $S$ of this type.
Up to extracting a subsequence, we may assume that the boundary circles of $\partial\Omega_n$ are homotopic to $c=H_{i,a_i}$,
a generating loop $c$ of the fundamental group $\Gamma_i$ of a fixed cusp $C=C_{i,a_i}$ of $S$.
We order the boundary circles as $\partial\Omega_n=c_n\cup c_n^+$, where $c_n$ is below $c_n^+$ with respect to $C$.
From now on, by passing to larger annuli if necessary, we assume that the $c_n^+$ are inside $C$ and the $c_n$ do not intersect $C$.

Assume now to the contrary of the assertion that $\lambda_0(\Omega_n) \rightarrow \lambda \leq 1/4$, and consider $\vf_n \in C^\infty_c(S)$ supported in $\Omega_n$ with $\| \vf_n \|_{L^2} = 1$ and $\Ray\vf_n \rightarrow \lambda$. Keeping in mind that $\vf_n$ vanishes at some point on each horocycle $H$ above $H_{i,a_i}$, we derive from Lemma \ref{cusp estimate} that
\begin{equation}\label{decay}
    \int_{C_{i,y}} |\nabla \vf_n|^2 \geq \pi^2 y^2 \int_{C_{i,y}} \vf_n^2
\end{equation}
for any $y\geq a_i$.
Let $C_n=C\cup\Omega_n$, the domain of $S$ with boundary $c_n$ that contains $C$.
Choose a point $x_0 \in c =\partial C$ and observe that there exists $R>0$ such that $d(x_0,c_n)\leq R$ for any $n\in \N$.
Indeed, since $c_n$ is homotopic to $c$, it must intersect $S_a$ which has a finite diameter.
It should also be noticed that the image of any minimizing geodesic from $x_0$ to $c_n$ is contained in $C_n$.

Let $\Gamma_i$ denote the parabolic subgroup of $\pi_1(S,x_0)$ that corresponds to $c$, now viewed as acting on the hyperbolic plane. 
We consider the bi-infinite cusp $\mathcal{C} = \Gamma_i\backslash\H$.
The domains $C_n$ correspond to domains $\tilde{C}_n$ in $\mathcal{C}$.
Clearly, there exists $\bar{a}\le a_i$ independent of $n$ such that $\partial \tilde{C}_n \cap \mathcal{C}_{\bar{a}} \neq \emptyset$, where $\mathcal{C}_{\bar{a}} = \Gamma \backslash \{ x + i y : y \geq \bar{a} \}$, and such that the shortest geodesic joining $\tilde{x}_0$ and $\partial \tilde{C}_n$ is contained in $\tilde{C}_n$.

Denote by $\tilde{\Omega}_n \subset \tilde{C}_n$ the lifted annuli and consider the corresponding functions $\tilde{\vf}_n \in C^\infty_c(\mathcal{C})$ supported in $\tilde{\Omega}_n$.
Keeping in mind that $\lambda_0(\mathcal{C}) = 1/4$ is not an eigenvalue of the Friedrichs extension of the Laplacian on $\mathcal{C}$, we deduce from \cite[Proposition 6.2]{P} that $\lambda = 1/4$ and $\tilde{\vf}_n \rightharpoonup 0$ in $L^2(\mathcal{C})$.
The latter implies that $\tilde{\vf}_n \rightarrow 0$ in $L^2(K)$ for any compact subset $K$ of $\mathcal{C}$.
Moreover, we derive from \ref{decay} that
\[
\int_{\mathcal{C}_y} |\nabla \tilde{\vf}_n|^2 \geq \pi^2 y^2 \int_{\mathcal{C}_y} \tilde{\vf}_n^2
\]
for any $y\geq a_i$.
Since the square of the $L^2$-norm of $\nabla\tilde\vf$ is bounded by $1+1/4$ for all sufficiently large $n$,
we get that $\tilde{\vf}_n \rightarrow 0$ in $L^2(\mathcal{C}_y)$ for any $y>0$.
Indeed, given any $y>0$ and $\ve>0$, there exists $y^\prime\ge\max\{y,a_i\}$ such that $\| \tilde{\vf}_n \|_{L^2(\mathcal{C}_{y^\prime})} < \ve$ for any $n$ sufficiently large, and $\tilde{\vf}_n \rightarrow 0$ in $L^2(\mathcal{C}_{y} \setminus \mathcal{C}_{y^\prime})$,
$\mathcal{C}_{y} \setminus \mathcal{C}_{y^\prime}$ being precompact.

Now consider $\chi \in C^\infty(\mathcal{C})$ supported in $\mathcal{C}_{\bar{a}/4}$, with $0 \leq \chi \leq 1$, and $\chi = 1$ on $\mathcal{C}_{\bar{a}/2}$. 
It is evident that $\chi \tilde{\vf}_n \rightarrow 0$ in $L^2(\mathcal{C})$, and thus, the functions $\psi_n = (1-\chi) \tilde{\vf}_n$ satisfy $\Ray\psi_n \rightarrow 1/4$. Indeed, the support of $\nabla \chi$ is compact and $\tilde{\phi}_n \to 0$ in $L^2(K)$ for $K$ compact, and so the Rayleigh quotients do not see the contributions from $\tilde{\phi}_n \nabla \chi$ for $n$ sufficiently large.
From the fact that $\psi_n = 0$ in $\mathcal{C}_{\bar{a}}$, we readily see that $\psi_n$ is supported in the annulus $A_n$ with boundary components $\tilde{c} = \partial \mathcal{C}_{a_i}$ and $\tilde{c}_n = \partial \tilde{C}_n$.
Moreover, since $\psi_n = 0$ in $\mathcal{C}_{\bar{a}/2}$ and $\tilde{c}_n \cap \mathcal{C}_{\bar{a}} \neq \emptyset$,
we derive that the support of $\psi_n$ does not intersect a shortest geodesic from $\tilde{x}_0$ to $\tilde{c}_n$,
a curve joining the connected components of $\partial A_n$.
This implies that the support of $\psi_n$ is contained in a topological disc of area at most $|S|$.
Since these can be lifted isometrically to $\mathbb{H}$, we arrive at a contradiction, in view of the Faber-Krahn inequality.

The second asserted inequality is clear:
Since the competing domains are compact with piecewise smooth boundary and $S$ is non-compact, the complement of each of the domains is a non-empty open subset of $S$.
Hence there is room for enlarging the domains and thereby diminishing $\lambda_0$.
\end{proof}

\begin{lem}\label{cusp estimate}
Let $C_{i,b_i}$ be a cusp of $S$ and $f \in \Lip(C_{i,b_i}) \cap H^1(C_{i,b_i})$ a non-zero function that vanishes at some point of $H_{i,t}$ for any $t \geq b_i$. Then $\Ray(f) \geq \pi^2 b_i^2$.
\end{lem}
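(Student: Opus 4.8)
The plan is to reduce the estimate to a one-dimensional Poincaré (Wirtinger) inequality along the horocycles, using that in dimension two the Dirichlet integral is conformally invariant, so on a cusp it coincides with the flat one. Identify $C_{i,b_i}$ isometrically with the cylinder $(\R/\Z)\times[b_i,\infty)$ carrying the metric $y^{-2}(dx^2+dy^2)$, so that $H_{i,t}$ is the circle $\{y=t\}$ and, from $|\nabla f|^2=y^2(f_x^2+f_y^2)$ and $dA=y^{-2}\,dx\,dy$,
\begin{align*}
    \|\nabla f\|_0^2=\int_{b_i}^{\infty}\!\!\int_0^1(f_x^2+f_y^2)\,dx\,dy,
    \qquad
    \|f\|_0^2=\int_{b_i}^{\infty}\!\!\int_0^1 f^2\,\frac{dx\,dy}{y^2}.
\end{align*}
Both are finite since $f\in H^1(C_{i,b_i})$, and by Fubini the slice function $f(\,\cdot\,,y)$ has $\int_0^1 f_x(x,y)^2\,dx<\infty$ for a.e.\ $y\ge b_i$ (indeed for every $y$, as $f$ is Lipschitz).

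The key step is the slice inequality: for each $y\ge b_i$,
\begin{align}\label{slicepoinc}
    \int_0^1 f(x,y)^2\,dx\le\frac{1}{\pi^2}\int_0^1 f_x(x,y)^2\,dx.
\end{align}
By hypothesis $f(\,\cdot\,,y)$ vanishes at some point $x_0\in\R/\Z$; cutting the circle open at $x_0$ identifies $f(\,\cdot\,,y)$ with an absolutely continuous function $g$ on $[0,1]$ satisfying $g(0)=g(1)=0$, with unchanged $L^2$-norms of $g$ and $g'$. The Dirichlet–Poincaré inequality on $[0,1]$, whose optimal constant is the first Dirichlet eigenvalue $\pi^2$ of $-d^2/dx^2$ (attained by $\sin\pi x$), yields $\int_0^1 g^2\le\pi^{-2}\int_0^1(g')^2$, which is \eqref{slicepoinc}.

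Finally I would integrate \eqref{slicepoinc} over $y\in[b_i,\infty)$, discard the nonnegative term $f_y^2$ in the Dirichlet integral, and use $y\ge b_i$ to replace $y^{-2}$ by $b_i^{-2}$:
\begin{align*}
    \|\nabla f\|_0^2
    \ge\int_{b_i}^{\infty}\!\!\int_0^1 f_x^2\,dx\,dy
    \ge\pi^2\int_{b_i}^{\infty}\!\!\int_0^1 f^2\,dx\,dy
    \ge\pi^2 b_i^2\int_{b_i}^{\infty}\!\!\int_0^1 f^2\,\frac{dx\,dy}{y^2}
    =\pi^2 b_i^2\,\|f\|_0^2,
\end{align*}
so $\Ray(f)=\|\nabla f\|_0^2/\|f\|_0^2\ge\pi^2 b_i^2$ as $f\ne0$. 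There is no serious obstacle here; the only points requiring (routine) care are the conformal computation of the two norms, the Fubini/measurability steps that license \eqref{slicepoinc} slice by slice — all guaranteed by $f\in\Lip(C_{i,b_i})\cap H^1(C_{i,b_i})$ — and observing that, $f$ being Lipschitz, its restriction to each $H_{i,t}$ is a genuine continuous function which really does vanish somewhere.
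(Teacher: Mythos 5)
Your proof is correct and follows essentially the same strategy as the paper's: drop the normal component of the gradient, apply the one-dimensional Dirichlet--Poincar\'e inequality along each horocycle (using the vanishing hypothesis to cut the circle open), and then use $y\ge b_i$ to compare weights. The only cosmetic difference is the setup: you invoke conformal invariance of the two-dimensional Dirichlet integral to pass to flat coordinates on the cylinder, whereas the paper organizes the same computation via the co-area formula for the height function, writing the slice inequality as $\lambda_0^D([0,t^{-1}])=\pi^2 t^2$; after unwinding, the two computations are identical.
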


\begin{proof}
Using the co-area formula, we compute
\begin{eqnarray}
\int_{C_{i,b_i}} |\nabla f|^2 &=& \int_{b_i}^{+\infty} t^{-1} \int_{H_{i,t}} |\nabla f|^2 dt \geq  \int_{b_i}^{+\infty} t^{-1} \int_{H_{i,t}} |\nabla^\top f|^2 dt \nonumber\\
&=& \int_{b_i}^{+\infty} t^{-1} \int_{H_{i,t}} |\nabla f|_{H_{i,t}}|^2 dt, \nonumber
\end{eqnarray}
where $\nabla^\top f$ stands for the tangential component of $\nabla f$ to $H_{i,t}$. Keeping in mind that $f|_{H_{i,t}}$ vanishes at some point and $H_{i,t}$ is the circle of length $t^{-1}$, $f|_{H_{i,t}}$ can be lifted to a function on $[0,t^{-1}]$ vanishing on the boundary, with the same Rayleigh quotient. Therefore,
\[
 \int_{H_{i,t}} |\nabla f|_{H_{i,t}}|^2 \geq \lambda_0^D([0,t^{-1}]) \int_{H_{i,t}} f^2
\]
for any $t \geq b_i$, where $\lambda_0^D([0,t^{-1}]) = \pi^2 t^2$ stands for the bottom of the Dirichlet spectrum of $[0,t^{-1}]$ (with the Euclidean metric). Combining the above, we conclude that
\[
\int_{C_{i,b_i}} |\nabla f|^2 \geq \pi^2 b_i^2 \int_{b_i}^{+\infty} t^{-1} \int_{H_{i,t}} f^2 dt = \pi^2 b_i^2 \int_{C_{i,b_i}} f^2,
\]
exploiting again the co-area formula.
\end{proof}

 \section{ Proof of \cref{main}}\label{secnod}
Recall that $S$ is a complete hyperbolic surface without boundary and with $n\ge1$ cusps $C_1,\dots,C_n$,
where $C_i=\Gamma_i\backslash\sigma_i\{y\geq b_i\}$ as in \cref{seceis}.
Here we discuss the structure of nodal sets of linear combinations of eigenfunctions of $\Delta_a$,
where $a=(a_1,\dots,a_n)$ is an $n$-tuple of real numbers with $a_i>b_i$.

Let $u$ be a non-trivial (finite) linear combination of eigenfunctions of $\Delta_a$ (for various eigenvalues).
 Then we may write
 \begin{equation}
 u = \varphi + \sum\nolimits_s d_s\hat f_s,
\end{equation}
 where $\varphi$ is a linear combination of cuspidal eigenfunctions and the $\hat f_s$ are truncations of functions $f_s$ as in \eqref{eisres} and \eqref{eispar}.
 Let
 \begin{align*}
     \tilde{u} = \varphi + \sum\nolimits_s d_sf_s,    
 \end{align*}
 defined and analytic on all of $S$ and
 \begin{align*}
    \hat{u} = \tilde{u} - \sum\nolimits_s d_s[f_s]_{a-\ve},
 \end{align*}
 defined and analytic on
 \begin{align*}
     C_{a-\ve} = \cup_{i=1}^n C_{i,a_i -\ve} \subset S
 \end{align*}
 for $a_i-\ve > b_i$.
 Notice that
 \begin{align*}
     \tilde u|_{S_a} = u|_{S_a}, \quad \hat u|_{C_a} = u|_{C_a}, \quad\text{and}\quad \tilde u|_{H_a} = \hat u|_{H_a} = u|_{H_a}.
 \end{align*}
 Notice also that $\tilde u=0$ would imply $\varphi=0$ and $\sum_sd_sf_s=0$. But then we would also have $\hat u=0$, and then $u=0$, a contradiction.
 On the other hand, $\hat u=0$ would imply that $\tilde u=\sum_s d_s [f_s]_{a-\ve}$ on $C_{a-\ve}$.
But then $\tilde u$ would only depend on the $y$-parameter in the cusps and would not be analytically extendable to the rest of $S$.
Hence both, $\tilde u$ and $\hat u$, do not vanish.

\begin{prop}\label{prop:eigen-nodal-set}
Let $u$ be a non-trivial finite linear combination of eigenfunctions of $\Delta_a$.
Then $Z(u)$ is a locally finite graph. Moreover, if $u$ is a finite linear combination of eigenfunctions of $\Delta_a$ then 
$S\setminus Z(u)$ has at least two components.
 \end{prop}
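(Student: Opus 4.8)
The plan is to treat the two pieces $S_a$ and $C_a$ of $S$ separately and to play the two analytic extensions $\tilde u$ and $\hat u$, introduced above, against each other. By \cref{cinf} and the remark following it, $u|_{S_a}$ and $u|_{C_a}$ are smooth, and being finite linear combinations of eigenfunctions of $\Delta$ on $S_a$, resp.\ $C_a$, they are in fact real-analytic on $\operatorname{int}(S_a)$, resp.\ $\operatorname{int}(C_a)$. Recall moreover that $u|_{S_a}=\tilde u|_{S_a}$ with $\tilde u$ real-analytic and $\tilde u\not\equiv 0$ on the connected surface $S$, and that $u|_{C_a}=\hat u|_{C_a}$ with $\hat u$ real-analytic and $\hat u\not\equiv 0$ on $C_{a-\ve}$.

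For the first assertion I would verify that $Z(u)$ is a locally finite graph piecewise. On $\operatorname{int}(S_a)$ the set $Z(u)$ coincides with $Z(\tilde u)$, and the nodal set of a non-trivial real-analytic function on a surface is a locally finite graph, as recalled in \cite{BMM}; similarly on each cusp $C_{i,a_i}$ the set $Z(u)$ coincides with $Z(\hat u)$, and $\hat u$ is real-analytic on the connected set $C_{i,a_i-\ve}$. Here one has to rule out $\hat u\equiv 0$ on some $C_{i,a_i-\ve}$: this would force $\tilde u$ to coincide with its zeroth Fourier coefficient throughout $C_{i,b_i}$, which, exactly as in the argument for ``$\hat u\ne 0$'' preceding the proposition, is incompatible with $\tilde u$ being a non-zero analytic combination of eigenfunctions on all of $S$. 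Finally, along the circles of $H_a$ both $\tilde u$ and $\hat u$ are analytic up to $H_a$ and agree there with the continuous function $u$, so the two pieces glue to a locally finite graph; in particular $Z(u)$ has empty interior.

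For the second assertion, suppose towards a contradiction that $S\setminus Z(u)$ is connected. As $u\ne 0$, it is a non-empty open subset of $S$ on which $u$ is continuous and nowhere zero, hence of constant sign, and after replacing $u$ by $-u$ if necessary we may assume $u\ge 0$ on $S$. Now $u\in\mathcal{H}_a$, so for each $1\le i\le n$ the zeroth Fourier coefficient of $u$ vanishes on $C_{i,a_i}$; equivalently $\int_{H_{i,y}}u=0$ for every $y\ge a_i$. By continuity and $u\ge 0$ this forces $u\equiv 0$ on every horocycle $H_{i,y}$ with $y\ge a_i$, hence $u\equiv 0$ on $C_a$. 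But then $\hat u$, which equals $u$ on $C_a$, vanishes on the non-empty open set $\operatorname{int}(C_a)$; as $\hat u$ is real-analytic and every connected component of $C_{a-\ve}$ meets $\operatorname{int}(C_a)$, we get $\hat u\equiv 0$ on $C_{a-\ve}$, contradicting $\hat u\ne 0$. Hence $S\setminus Z(u)$ has at least two components.

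The Fourier--positivity step and the local structure of analytic nodal sets are routine; the one point that needs genuine care --- and the only place where analyticity on all of $S$, rather than merely on $C_{a-\ve}$, is used --- is the claim that a non-zero analytic combination of eigenfunctions on $S$ cannot collapse to its zeroth Fourier coefficient on a single cusp. This is precisely the content of the non-vanishing statements $\tilde u\ne 0$ and $\hat u\ne 0$ recorded before the proposition, and I would rely on it in exactly the form given there.
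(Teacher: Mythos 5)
Your proof is correct and takes essentially the same route as the paper: split $S$ along $H_a$, use the analytic extensions $\tilde u$ on $S$ and $\hat u$ on $C_{a-\ve}$ to control $Z(u)$ piecewise via the locally finite nodal structure for analytic functions, and then use the vanishing of the zeroth Fourier coefficient along $H_a$ (hence $\int_{H_a}u=0$) to rule out $u$ having constant sign. Two small remarks: you are right to flag the possibility that $\hat u$ could vanish identically on a single cusp $C_{i,a_i-\ve}$ (the paper only records $\hat u\ne 0$ globally, and your cusp-by-cusp adaptation of the ``$\tilde u$ would depend only on $y$'' argument is the correct way to close that gap); and for the second assertion the paper's intended shortcut is slightly more direct --- once $u|_{H_a}=0$, the separating circle $H_a$ lies in the measure-zero graph $Z(u)$, so $S\setminus Z(u)$ meets both $\operatorname{int}(S_a)$ and $\operatorname{int}(C_a)$ --- whereas you instead propagate $u\equiv 0$ over all of $C_a$ and invoke analyticity of $\hat u$; both derivations are valid.
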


Notice that we do not exclude the possibility that $Z(u)$ contains vertices of degree zero and of odd degree.
 Those of odd degree may occur along $H_a$, but not elsewhere.

 \begin{proof}[Proof of \cref{prop:eigen-nodal-set}] 
If $u=\varphi$ is cuspidal, then $\tilde u=\hat u=u$ is analytic on $S$ and the assertion follows from \cite[Proposition 4]{OR}.
 In this case, all vertices of $Z(u)$ are of even degree, where vertices of degree zero, that is, isolated points, are not excluded.
 In general, $u$ is analytic on $S\setminus H_a$ and hence, again by \cite[Proposition 4]{OR} and away from $H_a$,
 $Z(u)$ is a locally finite graph with vertices of even degree.
 By what we said above,
 \begin{align*}
     Z(u) = Z(\tilde{u}|_{S_a}) \cup Z(\hat{u}|_{C_{a}}).
 \end{align*}
 Moreover, $\tilde u$ and $\hat u$ are analytic on $S$ and $C_{a-\ve}$, respectively,
 and hence, again by \cite[Proposition 4]{OR},
 $Z(\tilde u)$ and $Z(\hat u)$ are locally finite graphs with vertices of even degree on $S$ and $C_{a-\ve}$, respectively.
 Now $u|_{H_a}$ is analytic.
 Hence, for each $1\le i\le n$, there are now two cases:
 Either $u|_{H_{i,a_i}}=0$ or else $u$ has only finitely many zeros along $H_{i,a_i}$.
 For each such zero $x\in H_{i,a_i}$, there is a finite number of edges of $Z(\tilde{u}|_{S_a})$ and $Z(\hat{u}|_{C_{a}})$ ending in $x$.
 In both cases, $Z(u)$ is a graph about $x$,
 and $x$ is a vertex of degree the sum of the degrees of $x$ as a vertex of $\tilde{u}|_{S_a}$ and of $\hat{u}|_{C_{i,a_i}}$,
 diminished by two in the first case.
 
 If $u$ is a finite linear combination of eigenfunctions of $\Delta_a$, then $u \in D(\Delta_a)$, and so the zeroth Fourier coefficient of $u$ is zero in $H_a$. Therefore,
 \[
 \int_{H_a} u  = 0.
 \]
 This implies the last conclusion.
 \end{proof}

Let $\vf$ be a finite linear combination of eigenfunctions of $\Delta_a$.
By above discussion, $\vf$ is a continuous function on all of $S$, and $\vf|_{S_a}, \vf|_{C_a}, \vf|_{H_a}$ are analytic functions.
By Sard's Theorem, almost all values of these functions are regular values.
Following \cite{BMM} we say $\ve >0$ is $\vf$ regular if $\pm \ve$ are regular values of each of these three functions.
 Let 
 \[
 Z_\vf(\ve) = \{ x \in S: |\vf(x)| \le \ve\}
 \]
denote the $\ve$-nodal set of $\vf$.

Clearly, $\partial Z_\vf(\ve) \cap S_a$ and $\partial Z_\vf(\ve) \cap C_a$ consists of smooth arcs.
By regularity of $\vf|_{H_a}$, for any $p \in H_a$, at most one arc in $S_a$ (and at most one arc in $C_a$) can end at $p$.
By continuity of $\vf$, these arcs meet along $H_a$, and forms continuous closed arcs. 
Finally, $\cap_{\ve>0} Z_\vf(\ve) = Z(\vf).$

 \begin{prop}\label{prop:eigen-approxnodal-set}
 Let $\vf$ be a finite linear combination of eigenfunctions of $\Delta_a$. 
 For any $\vf$-regular $\ve>0$, the boundary $\partial Z_\vf(\ve)$ of the $\ve$-nodal set $Z_\vf(\ve)$ consists of a locally finite disjoint collection of continuous, piece-wise analytic loops. Moreover, only at points on $H_a$ these loops are possibly not analytic, and at any such point the two arcs emanating makes a non-zero angle.
 \end{prop}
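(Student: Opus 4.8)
The plan is to identify $\partial Z_\vf(\ve)$ with the level set $\vf^{-1}(\{-\ve,\ve\})$ and to analyse its local structure in the three regions: the interior of $S_a$, the interior of $C_a$, and the common boundary $H_a$, using throughout that $\pm\ve$ are regular values of each of $\vf|_{S_a}$, $\vf|_{C_a}$ and $\vf|_{H_a}$. As a first step, since $\vf$ is real and continuous, $Z_\vf(\ve)=\vf^{-1}([-\ve,\ve])$ is closed with $\{|\vf|<\ve\}$ in its interior, so $\partial Z_\vf(\ve)\subseteq\vf^{-1}(\{-\ve,\ve\})$; conversely, at a point $p$ with $|\vf(p)|=\ve$ the relevant differential (of $\vf$ on the ambient region, or of $\vf|_{H_a}$ if $p\in H_a$) is nonzero, so $p$ is a limit of points where $|\vf|>\ve$ and hence lies in $\partial Z_\vf(\ve)$. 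Thus $\partial Z_\vf(\ve)=\vf^{-1}(\{-\ve,\ve\})$. I would also record here that $\vf\to0$ uniformly at the ends of $S$: on each cusp $C_{i,a_i}$ the zeroth Fourier coefficient of $\vf$ vanishes because $\vf\in\mathcal D(\Delta_a)$, and the remaining Fourier modes of the $\hat f_s$ and of the cuspidal summand decay like $e^{-2\pi y}$; consequently $\{|\vf|\ge\ve\}$, and with it $\partial Z_\vf(\ve)$, is contained in a compact subdomain of $S$.

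The structure away from $H_a$ is routine. In the interior of $S_a$ the function $\vf|_{S_a}$ is analytic (as recalled just before the statement), and since $\pm\ve$ are regular values, $\vf^{-1}(\ve)$ and $\vf^{-1}(-\ve)$ are there disjoint embedded analytic $1$-submanifolds; likewise in the interior of $C_a$. Moreover $\vf^{-1}(\pm\ve)\cap H_a$ are regular level sets of $\vf|_{H_a}$ on the compact $1$-manifold $H_a$, hence finite and disjoint. So the only points at which the $1$-manifold or analyticity properties of $\partial Z_\vf(\ve)$ are in question are these finitely many points on $H_a$.

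The heart of the argument is the local picture at such a point $p$, say with $\vf(p)=\ve$. Viewing $S_a$ as a manifold with boundary $H_a$, the restriction $\vf|_{S_a}$ is analytic up to $H_a$ and $\ve$ is a regular value of it in the manifold-with-boundary sense (regular on the interior and for $\vf|_{H_a}$) --- precisely what $\vf$-regularity provides. Hence near $p$ the set $\vf|_{S_a}^{-1}(\ve)$ is an analytic $1$-manifold with boundary having $\{p\}$ as its boundary, i.e.\ a single analytic half-open arc $\gamma^-$ issuing from $p$ into $S_a$; symmetrically one obtains a single analytic arc $\gamma^+$ issuing from $p$ into $C_a$, so that locally $\partial Z_\vf(\ve)=\gamma^-\cup\{p\}\cup\gamma^+$ is a continuous, piecewise analytic arc. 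To control the angle, I would pass to coordinates $(s,t)$ near $p$ with $H_a=\{t=0\}$, $S_a=\{t\le0\}$ and $\partial_t=\nu$: the tangential derivative $\alpha:=\partial_s\vf(p)$ is the same computed from either side (it is the derivative of $\vf|_{H_a}$) and is nonzero, whereas the normal derivatives $\nabla_\nu\vf|_{S_a}(p)$ and $\nabla_\nu\vf|_{C_a}(p)$ differ by the locally constant jump of \cref{h2d}. One reads off that the incoming tangent ray of $\gamma^-$ at $p$ points strictly into $\{t<0\}$ and that of $\gamma^+$ strictly into $\{t>0\}$, both being transverse to $H_a$ since $\alpha\neq0$; in particular the two rays are distinct, so the two arcs meet at a non-zero angle, equal to $\pi$ exactly when the two normal derivatives agree and strictly less than $\pi$ otherwise.

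Assembling the pieces: $\partial Z_\vf(\ve)$ is a topological $1$-manifold without boundary, compact by the cusp-decay remark, hence a finite (in particular locally finite) disjoint union of loops, each analytic except at the finitely many points of $\vf^{-1}(\{-\ve,\ve\})\cap H_a$, at which two analytic arcs meet at a non-zero angle. The step I expect to be the main obstacle, requiring genuine care rather than bookkeeping, is the local analysis at $H_a$: because $\vf$ is only $C^1$ and not $C^2$ across $H_a$, the two sides must be handled as separate manifold-with-boundary problems and then matched, and one must verify both that exactly one level arc arrives from each side (from the manifold-with-boundary regular value theorem together with the transversality forced by regularity of $\vf|_{H_a}$) and that the jump in the normal derivative cannot make the two arcs issue tangentially in the same direction, i.e.\ cannot create a cusp of the nodal loop.
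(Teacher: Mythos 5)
Your proof is correct and follows essentially the same route as the paper: analyticity of $\vf|_{S_a}$, $\vf|_{C_a}$, $\vf|_{H_a}$ plus regularity of $\pm\ve$ gives a $1$-manifold structure away from $H_a$, and at a point $p\in H_a$ with $|\vf(p)|=\ve$ exactly one arc arrives from each side, transversally to $H_a$ because $\vf|_{H_a}$ has nonvanishing derivative at $p$. Your explicit coordinate computation of the two incoming tangent rays, and the observation (via the exponential decay of the nonzero Fourier modes of the cuspidal part and of the truncated Eisenstein summands) that $\{|\vf|\ge\ve\}$ is compact --- so that ``locally finite'' can in fact be upgraded to ``finite'' --- are small sharpenings beyond the paper's terse argument, but the underlying mechanism is the same.
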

 
 \begin{proof}
 Local finiteness of $\partial Z_\vf(\ve)$ near $H_a$ is explained above. At other points, $\vf$ is analytic.
 
 For any $p \in H_a$, there is at most one arc in in $\partial Z_\vf(\ve)\cap S_a$ (and at most one arc in $\partial Z_\vf(\ve)\cap C_a$) that can emanate from $p$, and these arcs cannot be tangential to $H_a$, because $\vf|_{H_a}$ is a regular at $p$.
 \end{proof}

\subsection{The Otal-Rosas argument} 
The proof of \cref{main} involves arguments from surface topology and follows the approach in \cite{BMM} very closely.
In fact, we refer the reader to \cite[Section 4]{BMM} for most of the arguments, and comment on the changes necessary to make the arguments there work. 

To begin, we consider a finite linear combination $\vf$ of eigenfunctions of $\Delta_a$ and observe that $\vf$ satisfies \cite[Lemma 4.3]{BMM}.
Indeed, the proof works almost identically, because $\vf \in H^1(S)$ and the points where $\vf$ is possibly not differentiable is contained in a union of horocycles, and so has zero measure. 

Next, we observe that the notion of $\ve$-discs carries over almost identically for any $\vf$ as above. 
There is an ambiguity about the definition of $\nu$ along the horocycle $H_a$. 
To remedy this, we observe that $H_a \cap \partial Z_\vf(\ve)$ is discrete, and define $\nu|_{H_a} = \partial \theta$, where $\theta$ is the unit tangent vector along $H_a$. 
If an arc in $\partial Z_\vf(\ve)$ intersects $H_a$, it does so at a non-zero angle; this definition is particularly well-suited for our purpose.
In particular, all the discussions in \cite{BMM} on $\ve$-discs go through, and one has the conclusions of Lemma 4.6-4.8 from \cite{BMM} for any $\vf$ that is a finite linear combination of eigenfunctions of $\Delta_a$.

Next we consider \cite[Lemma 4.10]{BMM} and observe that the arguments in the proof work almost identically to provide the existence of a component $Y_\vf(\ve)$ whose fundamental group contains the free group $F_2$. 
Indeed, the only difference is in the case where we consider a $\lambda$-eigenfunction $\vf$ with $\lambda \le \Lambda_a(S)$. 
In this case, unlike in \cite{BMM}, we do not have smoothness of $\vf$. 
However, since $\vf$ is an eigenfunction of $\Delta_a$, by \cref{prop:eigen-nodal-set}, we know that $Z(\vf)$ is a locally finite graph, and $S \setminus Z(\vf)$ has more than one component. 
Thus, by the last part of \cref{prop:analytic-systole}, no component of $S \setminus Z(\vf)$ is a disc, an annulus, or a M\"obius band. 
The proof for the incompressibility of the components in \cite[Lemma 4.10]{BMM} works without any change to give us the rest of the assertions of \cite[Lemma 4.10]{BMM}.

The rest of the arguments in \cite{BMM} leading to the proof of the main theorem \cite[Theorem 1.5]{BMM} are topological in nature, and works without any change (other than using the above mentioned modified versions of Lemma 4.3-4.10), to give a proof of \cref{main}.


\end{document}